\newtheorem{theorem}{Theorem}[section]
\newtheorem{corollary}{Corollary}
\newtheorem{lemma}[theorem]{Lemma}
\newtheorem{proposition}{Proposition}
 \numberwithin{equation}{section}
\newtheorem{remark}{Remark}
\newcommand{\keywords}
\def\bc{\begin{center}}       \def\ec{\end{center}}
\def\ba{\begin{array}}        \def\ea{\end{array}}
\def\be{\begin{equation}}     \def\ee{\end{equation}}
\def\bea{\begin{eqnarray}}    \def\eea{\end{eqnarray}}
\def\beaa{\begin{eqnarray*}}  \def\eeaa{\end{eqnarray*}}
\def\mathbb{\Bbb}
\begin{document}
\newpage

\textbf{Title}: Qualitative studies of advective competition system with Beddington--DeAngelis functional response\\

\textbf{Running title}: Beddington--DeAngelis competition system\\

\textbf{Authors}: Ling Jin, Qi Wang, Zengyan Zhang\\

\textbf{Corresponding author}: Qi Wang\\

\textbf{Email}: qwang@swufe.edu.cn\\

\textbf{Address}: Department of Mathematics\\
Southwestern University of Finance and Economics\\
555 Liutai Ave, Wenjiang\\
Chengdu, China 611130

\newpage

\title{\bf Qualitative studies of advective competition system with Beddington--DeAngelis functional response}
\author{Ling Jin, Qi Wang \thanks{Corresponding author.  Email: qwang@swufe.edu.cn.  QW receives supports from NSFC--China (No.11501460), the Project sponsored by SRF
for ROCS, SEM and the Project (No.15ZA0382) from Department of Education, Sichuan China.}, Zengyan Zhang \\
Department of Mathematics\\
Southwestern University of Finance and Economics\\ 
Chengdu, China 611130
} 

\date{}
\maketitle

\abstract
This paper investigates a reaction--advection--diffusion system modeling interspecific competition between two species over bounded domains.  The kinetic terms are assumed to satisfy the Beddington--DeAngelis functional responses.  We consider the situation that first species disperse by a combination of random walk and directed movement along the population density of the second species which disperse randomly within the habitat.  For multi--dimensional bounded domains, we prove the global existence and boundedness of time--dependent solutions.  For one--dimensional finite domains, we study the effect of diffusion and advection on the existence and stability of nonconstant positive steady states to the strongly coupled elliptic system.  In particular, our stability result of these nontrivial steady states provides a selection mechanism for stable wavemodes of the time-dependent system.  In the limit of diffusion rates, we show that the steady states of this full elliptic system can be approximated by nonconstant positive solutions of a shadow system and then we construct boundary spike solutions to this shadow system.  For the full elliptic system, we also investigate solutions with a single boundary spike or an inverted boundary spike, i.e., the first species concentrate on the boundary point while the second species dominate the whole habitat except the boundary point.  These spatial structures can be used to model the spatial segregation phenomenon through interspecific competitions.  Some numerical studies are performed to illustrate and support our theoretical findings.

2010 \emph{Mathematics Subject Classification}. {Primary: 35B25, 35B35, 35B40, 35A01, 35J47.  Secondary: 35B32, 92D25,92D40.}\\
\textbf{Keywords: Competition system, Beddington--DeAngelis functional response, global solutions, steady--state, stability, boundary spike, boundary layer.}

\section{Introduction}
In this paper, we consider the following reaction--advection--diffusion system
\begin{equation}\label{1}
\left\{
\begin{array}{ll}
u_t=\nabla \cdot (D_1\nabla u+\chi u \phi(v)\nabla v)+f(u,v),&x \in \Omega,t>0,\\
v_t=D_2\Delta v+g(u,v),&x \in \Omega,t>0,\\
\partial_\textbf{n}u=\partial_\textbf{n}v=0,&x\in\partial \Omega,t>0,\\
u(x,0)=u_0(x), v(x,0)=v_0(x),&x\in \Omega,
\end{array}
\right.
\end{equation}
where
\begin{equation}\label{2}
f(u,v)=\Big(-\alpha_1+\frac{\beta_1}{a_1+b_1u+c_1v}\Big)u,~g(u,v)=\Big(-\alpha_2+\frac{\beta_2}{a_2+b_2u+c_2v}\Big)v.
\end{equation}
$\Omega$ is a bounded smooth domain in $\mathbb R^N$, $N \geq 1$, $\partial\Omega$ is its boundary and $\textbf{n}$ denotes the unit outer normal on $\partial\Omega$.  $D_i$, $\alpha_i$, $\beta_i$, $a_i$, $b_i$ and $c_i$, $i=1,2$, are positive constants.  $\chi$ is a constant and $\phi$ is a smooth function such that $\phi(v)>0$ for all $v>0$.  The initial conditions $u_0(x)$ and $v_0(x)$ are non-negative smooth functions which are assumed to be not identically zero.

System (\ref{1}) describes the evolution of population densities of two mutually interfering species over a bounded habitat $\Omega$.  $u(x,t)$ and $v(x,t)$ are population densities of the competing species at space--time location $(x,t)\in\Omega\times \mathbb R^+$.  Diffusion rates $D_1$ and $D_2$ measure unbiased dispersals of the species.  It is assumed that species $u$ senses the population pressure from $v$ and directs its dispersal accordingly.  In particular, $u$ moves up or down along the population gradient of interspecies $v$ and $\chi$ is a constant that measures the intensity of such directed movement.  $\chi<0$ if $u$ invades the dwelling habitat of $v$ and $\chi>0$ if $u$ escapes the habitat of $v$.  Therefore $u$ takes active dispersal strategy to cope with population pressure from $v$, either to seek or avoid interspecific competition.  $\phi$ reflects the variation of the directed movement strength with respect to population density $v$.

For almost all mechanistic models that describe population dynamics, functional responses play key roles in the spatial--temporal and qualitative properties of the population distributions.  One feature of these kinetics is to take into account the environment's maximal load or carrying capacity.  Among the most common types are Lotka--Volterra \cite{Ta, BT}, Hollings \cite{KR, YWS} and Leslie--Gower \cite{AO}, etc.  To investigate mutual interference among intra--species, Beddington \cite{Bed} and DeAngelis \emph{et al}. introduced \cite{De} the following functional response of focal species $u$ preying natural resources
\[f_0(R,u)=\frac{1}{1+ahR+bu},\]
where $R$ is the resource density, $a$ is the attacking rate and $h$ is the handling time, while $b$ measures the interference rate.  To account for both intra-- and inter--specific competition, B. de Villemereuil and Lopez-Sepulcre \cite{VL} generalized the B-D response into
\[f_0(R,u,v)=\frac{1}{1+ahR+bu+cv},\]
where $b$ and $c$ represent the intra- and inter--specific competition rates.  Moreover, they performed field experiments on guppy-killifish system and collected data that the above models in the absence and presence of inter--species.  Functional response $f_0(R,u,v)$ models a competition relationship between $u$ and $v$ based on the idea that an increase in the population density one species should decreases the growth rate of all individuals since they consume the same resources.

In this paper, we assume that species $u$ and $v$ satisfy the Beddington--DeAngelis functional responses in (\ref{2}).  Ecologically, $\alpha_1$ and $\alpha_2$ represent intrinsic death rates of species $u$ and $v$ and $\beta_1$ and $\beta_2$ interpret their growth rates; $a_1$ and $a_2$ represent the compound effects of resource handling time and attack rate, while $b_1$ and $c_2$ account for intra-specific competition and $b_2$ and $c_1$ are the coefficients of inter--specific competition.  We are motivated to investigate the dynamics of population distributions to (\ref{1}) and its stationary system due to the effect of biased movement of species $u$.  To manifest this effect, we assume that the resources are spatially homogeneous and all the parameters are assumed to be positive constants.

To model the coexistence and segregation phenomenon through interspecific competition, various reaction--diffusion systems with advection or cross--diffusion have been proposed and studied.  For example, Wang \emph{et al.} investigated in \cite{WGY} the global existence of (\ref{1}) with Lotka--Volterra kinetics $f(u,v)=\big(a_1-b_1u-c_1v\big)u$ and $g(u,v)=\big(a_2-b_2u-c_2v \big)v$.  Existence and stability of nonconstant positive steady states have also been established through rigour bifurcation analysis.  They also studied transition layer steady states to the system.  Another example of this type is the SKT model proposed by Shigesada, Kawasaki and Teramoto \cite{SKT} in 1979 to study the directed dispersals due to mutual interactions.  See \cite{LN, LN2, MK, NWX, SKT} etc. for works and recent developments on the SKT model.  It is also worthwhile to mention that predator--prey models with Beddington--DeAngelis type functional response have been extensively used in \cite{CC,FK,Hw} etc.  Moreover, we refer the reader to \cite{CDAO} and the recent survey paper \cite{C} for detailed discussions on reaction--advection--diffusion systems of population dynamics.

From the viewpoint of mathematical modeling, it is interesting and important to investigate the spatially inhomogeneous distribution of population densities such as the coexistence and segregation of mutually interfering species, in particular, due to the effect of the dispersal strategy and population kinetics.  Time--dependent solutions can be used to model the segregation phenomenon in terms of finite or infinite blow--ups, that is, a species population density converges to a $\delta$--function or a linear combination of $\delta$--functions.  This approach has been taken for Keller--Segel chemotaxis system that models the directed cellular movements, along the gradient of chemicals in their environment.  See \cite{HP,HV,Nan}.  An alternative approach is to show that the solutions exist globally and converge to bounded steady states.  Then positive steady states with concentrating or aggregating structures such as boundary spikes, transition layers, etc. can be used to model the segregation phenomenon.  This approach has been taken by \cite{LN2, NWX, WX} etc.  The blow--up solution or a $\delta$--function is evidently connected to the species segregation phenomenon, however, it is not an optimal choice from the viewpoint of mathematical modeling since it challenges the rationality that population density can not be infinity.  On the other hand, it brings challenges to numerical simulations and makes it impossible to analyze the states after blow--up.

The rest of this paper is organized as follows.  In Section \ref{section2}, we obtain the existence and uniform boundedness of positive classical solutions to (\ref{1}) over multi--dimensional bounded domains--See Theorem \ref{theorem21}.  Our proof begins with the local existence and extension theory of Amann \cite{A,A1} for general quasilinear parabolic systems.  In Section \ref{section3}, we study the existence and stability of nonconstant positive steady states of (\ref{1}) over $\Omega=(0,L)$.  Our method is based on the local theory of Crandall and Rabinowitz \cite{CR} and its new version recently developed by Shi and Wang in \cite{SW}.  Our stability results give a selection mechanism for stable wavemodes of system (\ref{1})--See Theorem \ref{theorem32} and Theorem \ref{theorem33}.  Section \ref{section4} is devoted to the existence and asymptotic behaviors of nonconstant steady states with large amplitude.  It is shown that (\ref{1})--(\ref{2}) admits boundary spike and boundary layer solution if $D_1$, $\chi$ are comparably large and $D_2$ is small.  In Section \ref{section5} we briefly discuss our results and their applications.  Some interesting problems are also proposed for future studies.

In the sequel, $C$ and $C_i$ denote positive constants that may vary from line to line.

\section{Global existence and boundedness}\label{section2}
In this section, we investigate the global existence of positive classical solutions to system (\ref{1})--(\ref{2}).  We will show that the $L^\infty$-norms of $u$ and $v$ are both uniformly bounded in time.  The first set of our main results states as follow.
\begin{theorem}\label{theorem21}
Let $\Omega \subset \mathbb R^N$, $N\geq 1$, be a bounded domain with smooth boundary $\partial\Omega$ and assume that $\phi$ is a smooth function satisfying $\phi(v)>0$ for all $v\geq0$. Suppose that $(u_0,v_0)\in C(\bar \Omega)\times W^{1,p}$ for some $p>N$, and $u_0$, $v_0\geq0$, $\not \equiv0$ on $\bar \Omega$.  Then the IBVP (\ref{1})--(\ref{2}) admits a unique globally bounded classical solution $(u(x,t),v(x,t))$; moreover $u$ and $v$ are positive on $\bar \Omega$ for all $t\in(0,\infty)$.
\end{theorem}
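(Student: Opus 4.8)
The plan is to prove this global existence and boundedness result in four stages: local existence, positivity, an a priori $L^\infty$ bound on $v$, and finally the crucial $L^\infty$ bound on $u$ that closes the argument via the extension criterion. Let me sketch each stage and identify where the real difficulty lies.

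Let me think about the structure. System \eqref{1} is a quasilinear parabolic system where only the $u$-equation carries the cross-diffusion/advection term $\chi u\phi(v)\nabla v$. The $v$-equation is a semilinear equation decoupled in its principal part.

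Stage 1 (local existence). I would invoke Amann's theory \cite{A,A1} for quasilinear parabolic systems. Rewriting the diffusion operator in divergence form, the coefficient matrix is triangular (the $v$-equation has no $u$-flux), so the system is normally parabolic and Amann's theorem gives a unique maximal classical solution on $(0,T_{\max})$ with the standard extension criterion: if $T_{\max}<\infty$ then $\limsup_{t\to T_{\max}}\big(\|u(\cdot,t)\|_{L^\infty}+\|v(\cdot,t)\|_{W^{1,p}}\big)=\infty$.

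Stage 2 (positivity). The key observation is that $f(u,v)=u\,\tilde f(u,v)$ and $g(u,v)=v\,\tilde g(u,v)$ with $\tilde f,\tilde g$ smooth and bounded on the relevant range. Thus $u\equiv 0$ and $v\equiv 0$ are subsolutions; by the strong maximum principle (applied to the scalar $v$-equation directly, and to the $u$-equation viewing the advection as a prescribed drift once $v$ is known to be smooth) together with the Hopf lemma, $u,v>0$ on $\bar\Omega$ for $t>0$.

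Let me figure out the a priori bounds, which is the heart of the matter.

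Stage 3 ($L^\infty$ bound on $v$). This is the easy bound. From \eqref{2}, $g(u,v)=\big(-\alpha_2+\frac{\beta_2}{a_2+b_2u+c_2v}\big)v\le\big(-\alpha_2+\frac{\beta_2}{a_2}\big)v$, and since $u\ge0$ the reaction is bounded above by a logistic-type term in $v$ alone. Comparing with the ODE $\bar v'=(-\alpha_2+\frac{\beta_2}{a_2+c_2\bar v})\bar v$, whose solutions are globally bounded, a scalar comparison principle applied to the $v$-equation (which is genuinely semilinear with Neumann data) yields $\|v(\cdot,t)\|_{L^\infty}\le M_2:=\max\{\|v_0\|_{L^\infty},\,\frac{\beta_2-\alpha_2 a_2}{\alpha_2 c_2}\}$ uniformly in $t$.

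Now let me think hard about the $u$ bound, since that's where the cross-diffusion makes things genuinely delicate.

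The real question is whether the advection term can concentrate $u$.

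Stage 4 ($L^\infty$ bound on $u$). This is the main obstacle. The reaction $f$ gives at best an $L^1$ bound: integrating the $u$-equation over $\Omega$ (the divergence term vanishes by the no-flux condition) yields $\frac{d}{dt}\int_\Omega u=\int_\Omega(-\alpha_1+\frac{\beta_1}{a_1+b_1u+c_1v})u\le(-\alpha_1+\frac{\beta_1}{a_1})\int_\Omega u$, so $\|u(\cdot,t)\|_{L^1}$ is uniformly bounded. The difficulty is upgrading $L^1$ to $L^\infty$ against the advective flux $\chi u\phi(v)\nabla v$, which can in principle aggregate mass. I expect the plan to be a bootstrap: first one needs sufficient regularity on $\nabla v$, obtained by applying $L^p$-parabolic estimates to the $v$-equation using the now-known bound on $v$ and the $L^1$ bound on $u$ through the nonlinearity $g$; this places $\nabla v$ in a suitable $L^q$ space uniformly in $t$. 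Then I would run a Moser--Alikakos type iteration on the $u$-equation: testing with $u^{k}$, integrating by parts, and controlling the cross term $\int \chi u^{k}\phi(v)\nabla v\cdot\nabla u$ by Young's inequality against the good dissipation $\int|\nabla u^{(k+1)/2}|^2$, using the uniform $L^q$-bound on $\phi(v)\nabla v$ and boundedness of $\phi$ on $[0,M_2]$. Iterating over $k=2^m$ and tracking the constants gives a uniform-in-time $L^\infty$ bound $\|u(\cdot,t)\|_{L^\infty}\le M_1$, provided the integrability of $\nabla v$ is high enough to absorb the cross term at each step. Once both $\|u\|_{L^\infty}$ and $\|v\|_{W^{1,p}}$ are controlled uniformly, the extension criterion from Stage 1 forces $T_{\max}=\infty$, completing the proof. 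The delicate point throughout is decoupling the two bounds so that the estimate on $\nabla v$ needed in the $u$-iteration does not itself require an a priori $L^\infty$ bound on $u$; keeping this bootstrap non-circular—by exploiting that $\chi$ multiplies a factor $\phi(v)\nabla v$ controlled solely through the $v$-equation—is the crux of the argument.
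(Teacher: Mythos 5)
Your proposal is correct and follows essentially the same route as the paper: Amann's theory for the triangular quasilinear system with its $L^\infty$ extension criterion, positivity by the strong maximum principle and Hopf's lemma, bounds on $v$ and $\nabla v$ obtained independently of $u$ (exploiting that $u\geq 0$ enters $g$ only through the denominator), and finally a Moser--Alikakos iteration on the $u$-equation absorbing the advective cross term into the dissipation. The only variation is cosmetic: you obtain $\|v\|_{L^\infty}$ by ODE comparison, whereas the paper derives $L^p$ bounds for all finite $p$ by testing with $v^{p-1}$ and then upgrades to $W^{1,\infty}$ via Neumann-semigroup estimates, but both reach the same gradient bound that makes the iteration non-circular.
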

Many reaction--diffusion systems of population dynamics have maximum principles, which can be used to prove the global existence and boundedness of their classical solutions.  However, the presence of advection term makes system (\ref{1}) non--monotone and it inhibits the application of the comparison principle, at least in proving the boundedness of $u$.  A review of literature suggests that there are two well--established methods to prove the global existence for reaction--advection--diffusion systems.  One method is to derive the $L^\infty$ through the $L^p$-iterations of some compounded functions of the population densities; another method is to apply standard theory on semigroups generated by $\{e^{-\Delta t}\}_{t\geq0}$ and $L^p$--$L^q$ estimates on the abstract form of the system.  Our proof of global existence to (\ref{1})--(\ref{2}) involves both techniques.

\subsection{Preliminary results and Local existence}

First of all, we collect some basic properties of the analytic Neumann semigroup $\{e^{t\Delta}\}_{t\geq0}$.  We refer the reader to \cite{Henry} for classical results and \cite{H, HW, Winkler2} for recent developments.  Let $\Omega \subset \mathbb R^N$, $N\geq1$, be a bounded domain.  It is well known that $A=-\Delta+1$ is sectorial in $L^p(\Omega)$ and it possesses closed fractional powers $A^\theta, \theta \in (0,1)$.  For $m=0,1$, $p\in[1,\infty]$ and $q\in(1,\infty)$, the domain $\mathcal{D}(A^\theta)$ is a Banach space endowed with norm
\[\Vert w \Vert_{\mathcal{D}(A^\theta)} =\Vert A^\theta w \Vert_{L^p(\Omega)};\]
moreover, we have the following embeddings
\begin{equation}\label{21}
\mathcal{D}(A^\theta) \hookrightarrow
\left\{
\begin{array}{ll}
W^{m,q}(\Omega),&\text{ if } m-N/q<2\theta-N/p, \\
C^\delta(\bar{\Omega}),& \text{ if } 2\theta-N/p >\delta\geq0.
\end{array}
\right.
\end{equation}
furthermore, for each $p>1$, $\{e^{t\Delta}\}_{t\geq 0}$ maps $L^p$ into $\mathcal{D}(A^\theta)$: there exists a positive constant $C$ dependent on $\Omega$, $N$ and $p$ and $\nu$ such that, for any $q\in(p,+\infty]$
\begin{equation}\label{22}
\Vert A^\theta e^{-At} w \Vert_{L^q(\Omega)} \leq C t^{-\theta-\frac{N}{2}(\frac{1}{p}-\frac{1}{q})} e^{-\nu t}\Vert w \Vert_{L^p(\Omega)}, \forall~w \in L^p(\Omega),
\end{equation}
and
\begin{equation}\label{23}
\Vert A^\theta e^{t\Delta} w \Vert_{L^q(\Omega)} \leq C t^{-\theta-\frac{N}{2}(\frac{1}{p}-\frac{1}{q})} e^{-\nu t}\Vert w \Vert_{L^p(\Omega)}, \forall~w \in L^p(\Omega),\int_\Omega w=0,
\end{equation}
where $\nu>0$ is the principal Neumann eigenvalue of $-\Delta$.

Next we present the local existence of classical solutions to (\ref{1})--(\ref{2}) and their extension criterion based on Amann's theory in the following theorem.
\begin{theorem}\label{theorem22}
Let $\Omega$ be a bounded domain in  $\mathbb{R}^N$, $N\geq1$ with smooth boundary $\partial \Omega$ and assume that $\phi$ is a continuous function.  Then for any initial data $(u_0,v_0)\in C(\bar\Omega)\times W^{1,p}(\Omega)$, $p>N$, satisfying $u_0, v_0\geq, \not\equiv 0$ on $\bar \Omega$, there exists a constant $T_{\max}\in(0,\infty]$ and a unique solution $(u(x,t),v(x,t))$ to (\ref{1})--(\ref{2}) defined on $\bar \Omega\times [0,T_{\max})$ such that $(u(\cdot,t)$, $v(\cdot,t)) \in C^0(\bar \Omega,[0,T_{\max}))$, $u,v \in C^{2,1}(\bar \Omega,[0,T_{\max}))$, and $u(x,t)$, $v(x,t)>0$ on $\bar \Omega$ for all $t\in(0,T_{\max})$.  Moreover, if $\sup_{s\in(0,t)}\Vert (u,v)(\cdot,s) \Vert_{L^\infty}$ is bounded for $t\in(0,T_{\max})$, then $T_{\max}=\infty$, \emph{i.e.}, $(u,v)$ is global in time.
\end{theorem}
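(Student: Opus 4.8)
The plan is to recast (\ref{1})--(\ref{2}) as an abstract quasilinear parabolic system and then invoke the local existence, uniqueness and extensibility theory of Amann \cite{A,A1}. Setting $w=(u,v)^{T}$, the system takes the divergence form $w_t=\nabla\cdot\big(\mathcal{A}(w)\nabla w\big)+F(w)$ under homogeneous Neumann data, with reaction $F(w)=(f(u,v),g(u,v))^{T}$ and diffusion matrix
\[\mathcal{A}(w)=\begin{pmatrix} D_1 & \chi u\,\phi(v)\\[2pt] 0 & D_2\end{pmatrix}.\]
The decisive structural observation is that $\mathcal{A}(w)$ is upper triangular, so its eigenvalues are exactly $D_1$ and $D_2$, both strictly positive; hence the system is \emph{normally elliptic} for every admissible state $w$. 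Since $\phi$ is continuous and $F$ is smooth on the region where the denominators $a_i+b_iu+c_iv$ stay bounded below (which holds near nonnegative data, where they exceed $a_i>0$), Amann's theory applies with initial regularity $(u_0,v_0)\in C(\bar\Omega)\times W^{1,p}(\Omega)$, $p>N$, and yields a unique maximal classical solution on a maximal interval $[0,T_{\max})$, enjoying the asserted interior smoothness $u,v\in C^{2,1}(\bar\Omega\times(0,T_{\max}))$, together with an \emph{abstract} extensibility criterion: the solution persists as long as its norm in the relevant interpolation space stays finite.

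Next I would establish positivity by the maximum principle, exploiting the quasi-positivity of the kinetics $f(0,v)=0$ and $g(u,0)=0$. The $v$-equation is semilinear with $g(u,v)=h(u,v)\,v$ for a coefficient $h$ that is bounded once $u,v$ are bounded, so the strong parabolic maximum principle forces $v>0$ on $\bar\Omega$ for all $t\in(0,T_{\max})$ from $v_0\geq0,\not\equiv0$. Having secured a smooth positive $v$, I would expand the divergence in the $u$-equation to rewrite it as a \emph{linear} advection--reaction--diffusion equation for $u$, namely $u_t=D_1\Delta u+\chi\phi(v)\nabla v\cdot\nabla u+\big(\chi\nabla\cdot(\phi(v)\nabla v)+f(u,v)/u\big)u$, whose coefficients are bounded on compact time intervals; since $f(u,v)=k(u,v)u$ with $k$ bounded and $u_0\geq0,\not\equiv0$, the maximum principle again gives $u>0$ for $t>0$.

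The main work, and the genuine obstacle, is to sharpen Amann's abstract criterion into the $L^\infty$-criterion stated in the theorem, \emph{i.e.}, to show that a uniform bound on $\|(u,v)(\cdot,t)\|_{L^\infty}$ over $(0,t)$ already controls the interpolation-space norm and thereby precludes finite-time collapse. I would run a semigroup bootstrap built on the smoothing estimates (\ref{22})--(\ref{23}), writing each equation in Duhamel form relative to the Neumann heat semigroups $\{e^{tD_i\Delta}\}$. The $v$-component is straightforward: with $u,v\in L^\infty$ the source $g(u,v)$ is bounded, so applying (\ref{22}) with a fractional power $A^{\theta}$, $\theta\in(\tfrac12,1)$, and the embedding $\mathcal{D}(A^\theta)\hookrightarrow W^{1,p}(\Omega)$ from (\ref{21}) (valid once $2\theta>1$) yields a uniform bound on $\|\nabla v(\cdot,t)\|_{L^p}$, which a further iteration upgrades to control of $\nabla v$ in high $L^q$.

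The delicate step is the cross-diffusion flux $\chi\nabla\cdot\big(u\,\phi(v)\nabla v\big)$ in the $u$-equation, since it carries a spatial derivative of $v$ under a divergence. The plan is to absorb the divergence into a half power of $A$, using that $A^{-1/2}\nabla\cdot$ acts boundedly on $L^p$, so that $\|A^{\theta}e^{(t-s)D_1\Delta}\nabla\cdot(u\phi(v)\nabla v)\|_{L^p}\le C(t-s)^{-\theta-\frac12}e^{-\nu(t-s)}\|u\phi(v)\nabla v\|_{L^p}$, the last factor being finite by the $L^\infty$-bound on $u,\phi(v)$ and the previously obtained bound on $\nabla v$. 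The time-singularity $(t-s)^{-\theta-1/2}$ is integrable precisely when $\theta<\tfrac12$, while the embedding $\mathcal{D}(A^\theta)\hookrightarrow C^{\delta}(\bar\Omega)$ in (\ref{21}) requires $2\theta-N/p>0$; these two demands are simultaneously satisfiable \emph{exactly} because $p>N$, which furnishes the admissible window $\theta\in(N/(2p),\tfrac12)$. This produces an a priori Hölder bound on $u(\cdot,t)$ from the assumed $L^\infty$-bounds; a final round of linear parabolic Schauder estimates, treating both equations as linear with the now Hölder-continuous coefficients, lifts this to the $W^{1,p}$-type bound demanded by Amann's criterion. A bound here contradicts maximality unless $T_{\max}=\infty$, which is the extensibility conclusion. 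The only real care is in tuning the pair $(\theta,q)$ so that the temporal singularity stays integrable while the target norm still embeds appropriately through (\ref{21}).
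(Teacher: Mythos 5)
Your proposal is correct, and its first two components --- recasting (\ref{1}) as the triangular quasilinear system (\ref{24}) and invoking Amann's theory for local existence and uniqueness, then the parabolic strong maximum principle for positivity --- coincide with the paper's own proof. Where you genuinely diverge is in the extension criterion. The paper gets the $L^\infty$-criterion essentially for free: for triangular, normally parabolic systems such as (\ref{24}), Theorem 5.2 of \cite{A1} states precisely that a uniform $L^\infty$ bound rules out finite-time breakdown, so the paper simply cites it (existence from Theorems 7.3 and 9.3 of \cite{A}, extension from \cite{A1}). You instead treat Amann's extensibility condition as abstract (boundedness in a $W^{1,p}$-type phase space) and reprove the implication ``$L^\infty$ bound $\Rightarrow$ phase-space bound'' by hand: Duhamel formulas, the smoothing estimates (\ref{21})--(\ref{22}), the half-power gain for $e^{t\Delta}\nabla\cdot$, the window $\theta\in(N/(2p),\tfrac12)$ that is nonempty exactly because $p>N$, and a closing Schauder bootstrap. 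That argument is sound and is, in effect, a self-contained reproof of Amann's triangular-system theorem; it also mirrors the semigroup machinery the paper itself deploys later in Lemmas \ref{lemma24} and \ref{lemma25} for the a priori estimates. The paper's route buys brevity and delegates the hard analysis to the reference; yours buys transparency about why the triangular structure and the hypothesis $p>N$ convert $L^\infty$ control into compactness in the phase space. Two details to tidy in your version: first, your positivity argument for $u$ expands $\nabla\cdot(\phi(v)\nabla v)$ and hence needs $\phi\in C^1$, whereas the theorem assumes only continuity of $\phi$ --- this is avoidable by applying the maximum principle to the divergence-form operator directly, treating $\chi\phi(v)\nabla v$ as a bounded drift; second, since $\theta<\tfrac12$ is forced, the Duhamel step yields only $C^\delta$ (not $C^1$ or $W^{1,p}$) control of $u$, so you must first run the Schauder estimate on the $v$-equation to make the coefficients of the $u$-equation H\"older continuous, and only then close the bootstrap on $u$; your sketch states this but the order of the two Schauder applications is essential.
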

\begin{proof}
Let $\textbf{w}=(u,v)$.  (\ref{1}) can be rewritten as
\begin{equation}\label{24}
\left\{
\begin{array}{ll}
\textbf{w}_t=\nabla \cdot(A(\textbf{w})\nabla \textbf{w})+F(\textbf{w}),~x \in \Omega,~t>0,     \\
\textbf{w}(x,0)=(u_0,v_0),x\in \Omega; \frac{\partial \textbf{w}}{\partial \textbf{n}}=0,~x \in \partial \Omega,t>0,
\end{array}
\right.
\end{equation}
where
 \begin{equation*}A(\textbf{w})=\begin{pmatrix}
 D_1  &  \chi u\phi(v)  \\
  0           &  D_2
  \end{pmatrix} ,~~F(\textbf{w})=\begin{pmatrix}
f(u,v) \\
g(u,v)
  \end{pmatrix}.
   \end{equation*}
(\ref{24}) is a triangular \emph{normally parabolic} system since the eigenvalues of $A$ are positive, then the existence part follows from Theorem 7.3 and Theorem 9.3 of \cite{A} and the extension criterion follows from Theorem 5.2 in \cite{A1}.  Moreover, one can apply the standard parabolic boundary $L^p$ estimates and Schauder estimates to see that $u_t$, $v_t$ and all the spatial partial derivatives of $u$ and $v$ are bounded in $\bar \Omega \times (0,\infty)$ up to the second order, hence $(u,v)$ has the regularities as stated in the Theorem.

On the other hand, we can use parabolic Strong Maximum Principle and Hopf's boundary point lemma to show that $u>0$ and $v>0$ on $\bar \Omega \times (0,T_{\max})$.  This completes the proof of Theorem \ref{theorem22}.
\end{proof}

\subsection{A prior estimates}
We collect some properties of the local classical solutions obtained in Theorem \ref{theorem22}.  First of all, we have the following results.
\begin{lemma}\label{lemma23}Under the same conditions as in Theorem \ref{theorem22}, there exists a positive constant $C_1$ dependent on $a_1, b_1, \alpha_1, \beta_1, \vert \Omega \vert$ and $\Vert u_0 \Vert_{L^1}$ such that
\begin{equation}\label{25}
\Vert u(\cdot,t) \Vert_{L^1(\Omega)} \leq C_1, \forall t\in(0,T_{\max});
\end{equation}
moreover, for any $p\in(1,\infty)$, there exists a positive constant $C(p)$ dependent on $a_2, b_2, \alpha_2, \beta_2, \vert \Omega \vert$ and $\Vert v_0 \Vert_{L^p}$ such that
\begin{equation}\label{26}
\Vert v(\cdot,t) \Vert_{L^p(\Omega)}\leq C(p),\forall t\in(0,T_{\max}).
\end{equation}
\end{lemma}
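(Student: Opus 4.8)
The plan is to estimate $u$ and $v$ separately, exploiting the dissipative structure built into the Beddington--DeAngelis kinetics: the saturated terms $\frac{\beta_i}{a_i+b_iu+c_iv}$ are uniformly bounded, and together with the linear death terms $-\alpha_i$ they produce differential inequalities with a strictly negative leading coefficient, which is exactly what upgrades the local-in-time control of Theorem \ref{theorem22} to bounds uniform on all of $(0,T_{\max})$.

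For the $L^1$ bound \eqref{25} I would integrate the first equation of \eqref{1} over $\Omega$. The key point is that the no--flux condition $\partial_{\mathbf n}u=\partial_{\mathbf n}v=0$ lets the divergence theorem annihilate the entire transport term $\nabla\cdot(D_1\nabla u+\chi u\phi(v)\nabla v)$, so the advection plays no role at the $L^1$ level and
\[
\frac{d}{dt}\int_\Omega u\,dx=\int_\Omega f(u,v)\,dx=\int_\Omega\Big(-\alpha_1+\frac{\beta_1}{a_1+b_1u+c_1v}\Big)u\,dx.
\]
Since $u,v\ge 0$, I would discard $c_1v$ in the denominator and use $\frac{u}{a_1+b_1u}\le\frac{1}{b_1}$ to get the pointwise bound $\frac{\beta_1u}{a_1+b_1u+c_1v}\le\frac{\beta_1}{b_1}$. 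Writing $y(t)=\int_\Omega u\,dx$, this yields $y'\le-\alpha_1 y+\frac{\beta_1}{b_1}|\Omega|$, and a Gronwall comparison with the associated ODE gives $y(t)\le\max\{\|u_0\|_{L^1},\,\frac{\beta_1|\Omega|}{\alpha_1 b_1}\}$, uniformly in $t\in(0,T_{\max})$ and depending only on the parameters of $f$, on $|\Omega|$ and on $\|u_0\|_{L^1}$.

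For \eqref{26} I would run an $L^p$ energy estimate on the genuine scalar equation $v_t=D_2\Delta v+g(u,v)$. Multiplying by $pv^{p-1}$ and integrating, the Neumann condition turns the diffusion contribution into $-p(p-1)D_2\int_\Omega v^{p-2}|\nabla v|^2\,dx\le 0$, which I simply drop. For the reaction I again use $u\ge 0$ and the saturation estimate $\frac{\beta_2 v}{a_2+b_2u+c_2v}\le\frac{\beta_2 v}{a_2+c_2v}\le\frac{\beta_2}{c_2}$ to obtain
\[
\frac{d}{dt}\int_\Omega v^p\,dx\le -p\alpha_2\int_\Omega v^p\,dx+\frac{p\beta_2}{c_2}\int_\Omega v^{p-1}\,dx.
\]
The one remaining point is to absorb the lower--order term: by Young's inequality $v^{p-1}\le\eta v^p+C(\eta,p)$ with $\eta=\frac{\alpha_2 c_2}{2\beta_2}$, the inequality becomes $\frac{d}{dt}\int_\Omega v^p\le-\frac{p\alpha_2}{2}\int_\Omega v^p+C$, and Gronwall then gives a uniform-in-time bound on $\int_\Omega v^p$ in terms of $\int_\Omega v_0^p$ and the parameters of $g$; taking $p$--th roots yields \eqref{26}.

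Neither estimate is technically deep, and I expect no serious obstacle. The two points I would handle with some care are, first, recognizing the dissipativity — that the $-\alpha_i$ terms dominate the uniformly bounded saturated response, which is what makes the bounds uniform rather than merely local — and second, the absorption of $\int_\Omega v^{p-1}$ in the $L^p$ estimate, arranged so that the resulting constant depends only on $\|v_0\|_{L^p}$ and not on any higher norm. I would also emphasize that the vanishing of the advective flux upon integration is precisely what renders the $L^1$ estimate on $u$ insensitive to the cross--diffusion/advection structure that otherwise obstructs the maximum principle for \eqref{1}.
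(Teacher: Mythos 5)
Your proposal is correct and follows essentially the same route as the paper: integrating the $u$--equation over $\Omega$ so the no--flux advection term vanishes, bounding the saturated response by $\beta_1/b_1$, and applying Gronwall for \eqref{25}; then testing the $v$--equation with $v^{p-1}$, bounding the reaction by $\frac{\beta_2}{c_2}\int_\Omega v^{p-1}$, and absorbing that term into $-\frac{\alpha_2}{2}\int_\Omega v^p$ plus a constant for \eqref{26}. The only cosmetic difference is that you drop the (negative) gradient term and make the Young's inequality absorption explicit, which the paper does implicitly.
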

\begin{proof}
To show (\ref{25}), it is sufficient to show that $\int_\Omega u(x,t) dx$ is uniformly bounded for $t\in(0,\infty)$ since $u(x,t)>0$ according to Theorem \ref{theorem22}. Integrating the first equation in (\ref{1}) over $\Omega$ yields
\begin{align}\label{27}
 \frac{d}{dt} \int_\Omega u(x,t)dx &=  \int_\Omega \big(-\alpha_1+\frac{\beta_1}{a_1+b_1u+c_1v}\big)udx \nonumber \\
&= -\alpha_1\int_\Omega  udx+\int_\Omega \frac{\beta_1 u}{a_1+b_1u+c_1v}dx  \nonumber \\
&\leq -\alpha_1\int_\Omega udx+\frac{\beta_1\vert \Omega\vert }{b_1},
\end{align}
then (\ref{25}) follows from (\ref{27}) thanks to Gronwall's lemma.

To prove (\ref{26}), we first see that $\Vert v \Vert_{L^1(\Omega)}$ is bounded for $t\in(0,T_{\max})$ by the same arguments as above.  Taking $v$-equation in (\ref{1}), we have from the integration by parts that
\begin{align}\label{28}
 \frac{1}{p}\frac{d}{dt} \int_\Omega v^{p}(x,t)dx &=\int_\Omega v^{p-1}v_t dx=\int_\Omega v^{p-1}\Big(D_2\Delta v+(-\alpha_2+\frac{\beta_2}{a_2+b_2u+c_2v})v\Big) dx\nonumber \\ \nonumber
&=-\frac{4D_2(p-1)}{p^2} \int_\Omega \vert \nabla v^\frac{p}{2}\vert^2 dx+\int_\Omega (-\alpha_2+\frac{\beta_2}{a_2+b_2u+c_2v})v^{p} dx  \\ \nonumber
&\leq -\frac{4D_2(p-1)}{p^2} \int_\Omega \vert \nabla v^\frac{p}{2}\vert^2 dx-\alpha_2 \int_{\Omega} v^{p} dx+\frac{\beta_2}{c_2}\int_\Omega v^{p-1}dx \nonumber  \\
&\leq -\frac{4D_2(p-1)}{p^2} \int_\Omega \vert \nabla v^\frac{p}{2}\vert^2 dx -\frac{\alpha_2}{2}\int_{\Omega} v^{p} dx+C_3(p),
\end{align}
therefore, $\Vert v(\cdot,t) \Vert_{L^p(\Omega)}$ is bounded over $(0,T_{\max})$ for any $p\in(1,\infty)$.
\end{proof}

Lemma \ref{lemma23} provides the $L^\infty$--bound of $v$ and $L^1$--bound of $u$.  To establish the $L^\infty$--bound on $u$, we need to estimate $\Vert \nabla v \Vert_{L^p}$ for some large $p$.  For this purpose, we convert the $v$-equation in (\ref{1}) into the following abstract form
\begin{equation}\label{29}
v(\cdot,t)= e^{-D_2 At}v_0+\int_0^t e^{-D_2 A(t-s)} \big(D_2v(\cdot,s)+g(u(\cdot,s),v(\cdot,s) ) \big)ds,
\end{equation}
where $g(u,v)$ is given in (\ref{2}).  After applying the estimates (\ref{21})-(\ref{23}) on (\ref{29}), we have the following result.
\begin{lemma}\label{lemma24}
Let $(u,v)$ be a classical solution of (\ref{1})--(\ref{2}) over $\Omega \times (0,T_{\max})$.  For any $1\leq p \leq q \leq \infty$, there exists a positive constant $C$ dependent on $\Vert v_0 \Vert _{L^q(\Omega)}$ and $\Omega$ such that
\begin{equation}\label{210}
 \Vert v(\cdot,t) \Vert _{W^{1,q}(\Omega)} \leq C\left(1 + \int_0^te^{-\nu(t-s)} (t-s)^{-\frac{1}{2}-\frac{N}{2}(\frac{1}{p}-\frac{1}{q})}   \Vert v(\cdot,s)+1 \Vert_{L^p}   ds\right),
\end{equation}
 for any $t\in (0,T_{\max})$ provided that $q\in[1,\frac{Np}{N-p})$ if $p\in [1,N)$, $q\in [1,\infty)$ if $p=N$ and $q=\infty$ if $p>N$, where $\nu$ is the first Neumann eigenvalue of $-\Delta$ in $\Omega$.
\end{lemma}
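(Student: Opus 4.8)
\quad The plan is to feed the Duhamel representation (\ref{29}) of the $v$-equation into the smoothing estimates (\ref{22})--(\ref{23}) for the analytic Neumann semigroup, trading the half-power of $A=-\Delta+1$ for one spatial derivative. Since the generator in (\ref{29}) is $D_2A$, a fixed positive multiple of $A$, all constants and the exponential rate below absorb the factor $D_2$ without changing the form of the estimate. The first point is that, for $1<q<\infty$, one controls the full $W^{1,q}$-norm by the half-power, $\Vert v(\cdot,t)\Vert_{W^{1,q}(\Omega)}\le C\Vert A^{1/2}v(\cdot,t)\Vert_{L^q(\Omega)}$, using the boundedness of $A^{-1/2}$ and of $\nabla A^{-1/2}$ on $L^q(\Omega)$; the endpoints $q\in\{1,\infty\}$ are reached from the same family of $L^p$--$L^q$ gradient bounds for $e^{t\Delta}$. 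Applying $A^{1/2}$ to (\ref{29}) and invoking (\ref{22}) with $\theta=\frac12$ then produces, for each factor $e^{-D_2A(t-s)}$, precisely the kernel $(t-s)^{-\frac12-\frac N2(\frac1p-\frac1q)}e^{-\nu(t-s)}$ that appears in (\ref{210}).

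It remains to estimate the source term $D_2v+g(u,v)$ in $L^p$. Here I would exploit the monotonicity of the Beddington--DeAngelis response: since $u,v\ge0$ and all parameters are positive, $0\le\frac{\beta_2}{a_2+b_2u+c_2v}\le\frac{\beta_2}{a_2}$, whence $\vert g(u,v)\vert\le(\alpha_2+\frac{\beta_2}{a_2})v$ and therefore $\Vert D_2v(\cdot,s)+g(u(\cdot,s),v(\cdot,s))\Vert_{L^p(\Omega)}\le C\Vert v(\cdot,s)+1\Vert_{L^p(\Omega)}$. Crucially this bound is \emph{independent of} $u$, so no information on $u$ beyond positivity is used. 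Substituting into the Duhamel formula and collecting terms yields the integral on the right-hand side of (\ref{210}); the contribution of the initial datum, $A^{1/2}e^{-D_2At}v_0$, stays bounded uniformly in $t\in(0,T_{\max})$ thanks to the $W^{1,p}$-regularity of $v_0$ assumed in Theorem \ref{theorem22} together with the contractivity of the semigroup, and this bounded contribution is exactly what the additive constant $1$ in (\ref{210}) accounts for.

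Finally, I would verify that the stated restrictions on $(p,q)$ are the ones that make the construction meaningful: the kernel $(t-s)^{-\frac12-\frac N2(\frac1p-\frac1q)}$ is integrable near $s=t$ if and only if $\frac N2(\frac1p-\frac1q)<\frac12$, i.e.\ $q<\frac{Np}{N-p}$ when $p<N$, $q<\infty$ when $p=N$, and $q=\infty$ is admissible when $p>N$ --- precisely the listed hypotheses, which are also the Sobolev thresholds, as expected from a one-derivative gain. The step I expect to be the main obstacle is securing the \emph{sharp} derivative exponent $\frac12$ uniformly across this range, especially at the endpoints $q\in\{1,\infty\}$. Using the abstract embedding $\mathcal D(A^\theta)\hookrightarrow W^{1,q}(\Omega)$ of (\ref{21}) is not good enough, since it forces $\theta>\frac12$ and hence a strictly more singular (and, at the threshold, non-integrable) kernel $(t-s)^{-\theta-\frac N2(\frac1p-\frac1q)}$; one must instead work at $\theta=\frac12$ exactly, through the boundedness of $\nabla A^{-1/2}$ and the $L^p$--$L^q$ gradient estimate for $e^{t\Delta}$, which is what delivers the exponent written in (\ref{210}).
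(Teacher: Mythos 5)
Your proposal is correct and takes essentially the same route as the paper, whose entire "proof" of this lemma is the sketch preceding it: convert the $v$-equation to the Duhamel form (\ref{29}) and apply the semigroup estimates (\ref{21})--(\ref{23}), with your positivity bound $\vert g(u,v)\vert\le(\alpha_2+\beta_2/a_2)v$ (making the source term independent of $u$) and the $\theta=\frac{1}{2}$ smoothing step supplying exactly the details the paper omits. Your closing observation---that the embedding (\ref{21}) alone would force $\theta>\frac{1}{2}$ and hence a more singular kernel, so that the sharp exponent $-\frac{1}{2}-\frac{N}{2}(\frac{1}{p}-\frac{1}{q})$ in (\ref{210}) requires the $L^p$--$L^q$ gradient estimates (boundedness of $\nabla A^{-1/2}$, or the direct gradient bounds for $e^{t\Delta}$ at the endpoints)---is a legitimate sharpening of, not a departure from, the paper's argument.
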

By taking $p>N$ in (\ref{210}), we can quickly have the following result.
\begin{lemma}\label{lemma25}  Assume the same conditions as in Lemma \ref{lemma23}.  Then there exists a positive constant $C$
\begin{equation}\label{211}
\Vert   v(\cdot,t) \Vert_{W^{1,\infty}(\Omega)} \leq C, \forall t\in(0,T_{\max}).
\end{equation}
\end{lemma}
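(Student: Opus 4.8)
The plan is to feed the uniform $L^p$-bounds from Lemma \ref{lemma23} into the abstract estimate (\ref{210}) of Lemma \ref{lemma24}, choosing the exponents so that the left-hand side controls the full $W^{1,\infty}$-norm. Since (\ref{210}) permits $q=\infty$ precisely when $p>N$, I would first fix any exponent $p>N$; this is the only place where that hypothesis is used, and it is dictated by the requirement that the time-singularity in the convolution kernel remain integrable.

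With $p>N$ fixed and $q=\infty$, Lemma \ref{lemma23} guarantees $\Vert v(\cdot,s)\Vert_{L^p(\Omega)}\le C(p)$ uniformly in $s\in(0,T_{\max})$, hence $\Vert v(\cdot,s)+1\Vert_{L^p}\le C(p)+\vert\Omega\vert^{1/p}=:C'$ is bounded independently of $s$. Substituting this into (\ref{210}), where now $\frac1p-\frac1q=\frac1p$, yields
\begin{equation*}
\Vert v(\cdot,t)\Vert_{W^{1,\infty}(\Omega)}\le C\Big(1+C'\int_0^t e^{-\nu(t-s)}(t-s)^{-\frac12-\frac{N}{2p}}\,ds\Big).
\end{equation*}

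The remaining task is to bound the time integral uniformly in $t$. After the change of variables $\tau=t-s$ the integral becomes $\int_0^t e^{-\nu\tau}\tau^{-\frac12-\frac{N}{2p}}\,d\tau\le\int_0^\infty e^{-\nu\tau}\tau^{-\frac12-\frac{N}{2p}}\,d\tau$, which I recognize as a Gamma-type integral equal to $\nu^{-(\frac12-\frac{N}{2p})}\Gamma\big(\frac12-\frac{N}{2p}\big)$. This is finite for two independent reasons: near $\tau=0$ the exponent $-\frac12-\frac{N}{2p}$ exceeds $-1$ exactly because $p>N$, securing integrability of the singularity; and near $\tau=\infty$ the factor $e^{-\nu\tau}$ with $\nu>0$ (the first Neumann eigenvalue) forces convergence of the tail. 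Denoting this finite value by $C''$, I conclude $\Vert v(\cdot,t)\Vert_{W^{1,\infty}(\Omega)}\le C(1+C'C'')$, a bound independent of $t$, which is (\ref{211}).

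The argument is short, and the only genuine subtlety—what the phrase ``quickly'' signals—is the matching of exponents: one must take $p>N$ so that the singularity exponent $-\frac12-\frac{N}{2p}$ stays strictly above $-1$, since otherwise the temporal convolution would diverge at $s=t$. Everything else is a direct consequence of the uniform $L^p$-estimates already established and the exponential decay built into the Neumann semigroup.
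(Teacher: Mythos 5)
Your proof is correct and takes exactly the route the paper intends: the paper's entire proof of Lemma \ref{lemma25} is the one-line remark that taking $p>N$ in (\ref{210}) gives the result, and your argument simply fills in the details --- the uniform $L^p$-bound on $v$ from Lemma \ref{lemma23}, the admissibility of $q=\infty$ when $p>N$, and the finiteness of the convolution integral $\int_0^\infty e^{-\nu\tau}\tau^{-\frac12-\frac{N}{2p}}\,d\tau$ because the singularity exponent stays above $-1$ precisely when $p>N$ while the factor $e^{-\nu\tau}$ handles the tail. No gaps; the exponent bookkeeping and the Gamma-integral evaluation are both accurate.
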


\subsection{Global existence of bounded classical solutions}

\begin{proof} [Proof\nopunct]\emph{of Theorem} \ref{theorem21}.
Thanks to (\ref{211}) in Lemma \ref{lemma25}, we only need to show that $\sup_{t\in(0,T_{\max})} \Vert u(\cdot,t) \Vert_{L^\infty}<\infty$, then we must have that $T_{\max}=\infty$; moreover, the regularities of solutions $(u,v)$ follow from Theorem \ref{theorem21}.

Without loss of our generality, we assume, in light of Lemma \ref{lemma25}, that $\Vert\phi(v(\cdot,t))\nabla v(\cdot,t) \Vert_{L^\infty(\Omega)}\leq1$ for all $t\in(0,T_{\max})$.  For any $p>1$, we test the first equation of (\ref{1}) by $pu^{p-1}$ and integrate it over $\Omega$ by parts to have that
\begin{align}\label{212}
&\frac{1}{p}\frac{d}{dt} \int_\Omega  u^p dx=\int_\Omega u^{p-1}u_t dx \nonumber \\
=&\int_\Omega u^{p-1}\Big(\nabla \cdot (D_1\nabla u+\chi u \phi(v)\nabla v)+(-\alpha_1+\frac{\beta_1}{a_1+b_1u+c_1v}\big)u\Big)   \nonumber \\
\leq & -\frac{4D_1(p-1)}{p^2} \int_\Omega\vert\nabla u^\frac{p}{2}\vert^2 dx +\frac{2(p-1)\chi}{p} \int_\Omega u^\frac{p}{2} \vert\nabla u^\frac{p}{2}\vert dx-\alpha_1\int_\Omega u^p dx+\frac{\beta_1}{b_1} \int_\Omega u^{p-1} dx \nonumber \\
\leq&-\frac{2D_1(p-1)}{p^2} \int_\Omega \vert\nabla u^\frac{p}{2}\vert^2 dx+\frac{(p-1)\chi^2}{2D_1}\int_\Omega  u^p dx+C_1(p),
\end{align}
where $C_1(p)=\frac{1}{q}\Big(\frac{\beta_1}{\alpha_1b_1p}\Big)^\frac{q}{p}\vert\Omega \vert$ with $q=\frac{p}{p-1}$ and we have applied the Young's inequality
\[ab\leq \epsilon a^p+\frac{b^q}{q(p\epsilon)^\frac{q}{p}},\forall a,b,\epsilon>0.\]
We recall Corollary 1 in \cite{CKWW} due to the Gagliardo--Nirenberg inequality: for any $\epsilon>0$, there exists a positive constant $C_0$ dependent on $N$ and $\Omega$ such that
\begin{equation}\label{213}
\Vert u^\frac{p}{2}\Vert^2_{L^2(\Omega)}\leq \epsilon\Vert \nabla u^\frac{p}{2} \Vert^2_{L^2(\Omega)}+C_0(1+\epsilon^{-\frac{N}{2}})\Vert u^\frac{p}{2} \Vert^2_{L^1(\Omega)}.
\end{equation}
Choosing $\epsilon=\big(\frac{\sqrt 2D_1}{p\chi}\big)^2$ in (\ref{213}), we see that (\ref{212}) becomes
\begin{equation}\label{214}
\frac{d}{dt} \int_\Omega  u^p dx \leq -\frac{p(p-1)\chi^2}{D_1}\int_\Omega  u^p dx+\frac{C_0p^2\chi^2}{2D_1^2}\Big(1+\big(\frac{p\chi}{\sqrt D_1}\big)^N\Big)\Big(\int_\Omega  u^{\frac{p}{2}} dx \Big)^2dx+C_1(p)p
\end{equation}
Finally, by applying the standard Moser--Alikakos iteration \cite{A0} to (\ref{214}), we can show that $\Vert u(\cdot,t) \Vert_{L^\infty(\Omega)}$ is uniformly bounded for $t\in(0,\infty)$.  This completes the proof of Theorem \ref{theorem21}.
\end{proof}

\section{Existence of nonconstant positive steady states}\label{section3}
In this section, we investigate nonconstant positive steady states of (\ref{1})--(\ref{2}) over one--dimensional finite domain in the following form
\begin{equation}\label{31}
\left\{
\begin{array}{ll}
(D_1 u'+\chi u \phi(v)v')'+\big(-1+\frac{1}{a_1+b_1u+c_1v}\big)u=0,&x \in (0,L),\\
D_2  v''+\big(-1+\frac{1}{a_2+b_2u+c_2v}\big)v=0,&x \in (0,L),\\
u'(x)=v'(x)=0,&x=0,L,
\end{array}
\right.
\end{equation}
where $'$ denotes the derivative taken with respect to $x$.  We have assumed in (\ref{31}) that $\alpha_i=\beta_i=1$, $i=1,2$ without loss of our generality.  Indeed, through the scalings
\[\tilde t=\alpha_1 t, \tilde x=\sqrt \alpha_1 x, \tilde c_1=\frac{\alpha_2c_1}{\beta_1}, \tilde c_2=\frac{\alpha_2^2c_2}{\alpha_1\beta_2}, \tilde \chi=\frac{\chi}{\alpha_1}\]
\[\tilde D_i=\frac{D_1}{\alpha_i}, \tilde a_i=\frac{a_i\alpha_i}{\beta_i}, i=1,2.\]
and $\tilde v=\frac{\alpha_1}{\alpha_2}v$, after dropping the \emph{tilde}s (\ref{1})--(\ref{2}) becomes
\begin{equation}\label{32}
\left\{
\begin{array}{ll}
u_t=\nabla\cdot(D_1\nabla u+\chi u\phi(v)\nabla v)+\big(-1+\frac{1}{a_1+b_1u+c_1v}\big)u,&x \in \Omega,t>0,\\
v_t=D_2\Delta v+\big(-1+\frac{1}{a_2+b_2u+c_2v}\big)v,&x \in \Omega,t>0,\\
\partial_\textbf{n}u=\partial_\textbf{n}v=0,&x\in\partial \Omega,t>0,\\
u(x,0)=u_0(x), v(x,0)=v_0(x),&x\in \Omega,
\end{array}
\right.
\end{equation}
Then the one--dimensional steady state of (\ref{32}) over $\Omega=(0,L)$ leads us to (\ref{31}).

We will see that large advection rate $\chi$ drives the emergence of nonconstant positive solutions to (\ref{31}).  There are four constant solutions to system (\ref{31}): $(0,0)$, $(0,\frac{1-a_2}{c_2})$, $(\frac{1-a_1}{b_1},0)$ and $(\bar u,\bar v)$, where
\begin{equation}\label{33}
\bar u=\frac{(1-a_2)c_1-(1-a_1)c_2}{b_2c_1-b_1c_2}, \bar v=\frac{(1-a_1)b_2-(1-a_2)b_1}{b_2c_1-b_1c_2},
\end{equation}
is the unique positive solution provided that
\begin{equation}\label{34}
\frac{c_1}{c_2}<\frac{1-a_1}{1-a_2}<\frac{b_1}{b_2},\quad\text{or}\quad  \frac{b_1}{b_2}<\frac{1-a_1}{1-a_2}<\frac{c_1}{c_2}.
\end{equation}
We assume this condition throughout the rest of our paper.  The first inequality in (\ref{34}) implies that $b_2c_1<b_1c_2$, therefore the inter--specific competition is weak compared to the intra-specific competition.  We call this condition the weak competition case.  Similarly, we call the latter condition in (\ref{34}) the strong competition case.  The same weak and strong competition cases are proposed in the studies of SKT competition models in \cite{SKT}.

\subsection{Diffusive system without advection}
We shall show that the emergence of nonconstant positive solutions to (\ref{31}) is driven by large advection rate $\chi$.   To see this, we study the existence of nonconstant positive solutions to (\ref{32}) with $\chi=0$, \emph{i.e.}, the following diffusive system
\begin{equation}\label{35}
\left\{
\begin{array}{ll}
u_t=D_1 u''+\big(-1+\frac{1}{a_1+b_1u+c_1v}\big)u,&x \in (0,L),t>0,\\
v_t=D_2 v''+\big(-1+\frac{1}{a_2+b_2u+c_2v}\big)v,&x \in (0,L),t>0,\\
u'(x)=v'(x)=0,&x=0,L, t>0,\\
u(x,0)=u_0(x), v(x,0)=v_0(x),&x\in (0,L).
\end{array}
\right.
\end{equation}
Our main result states as follows.
\begin{theorem}\label{theorem31}
Suppose that (\ref{34}) holds.  Then $(\bar u,\bar v)$ in (\ref{33}) is asymptotically stable with respect to (\ref{35}) and it is the only positive steady state of (\ref{35}) provided with either (i). $\frac{b_1}{b_2}>\frac{c_1}{c_2}$ or (ii). $\frac{b_1}{b_2}\leq\frac{c_1}{c_2}$ and $\max \{D_1,D_2\}$ is large.
\end{theorem}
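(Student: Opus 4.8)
The plan is to split the statement into its two assertions — asymptotic stability of $(\bar u,\bar v)$, and its uniqueness as a positive steady state — and to treat the latter separately under hypotheses (i) and (ii).

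First I would prove \emph{linearized stability}. Writing the kinetics of (\ref{35}) as $f=(-1+\frac{1}{a_1+b_1u+c_1v})u$ and $g=(-1+\frac{1}{a_2+b_2u+c_2v})v$, the defining identities $a_i+b_i\bar u+c_i\bar v=1$ collapse the reaction Jacobian at $(\bar u,\bar v)$ to $J=\begin{pmatrix}-b_1\bar u & -c_1\bar u\\ -b_2\bar v & -c_2\bar v\end{pmatrix}$. Expanding a perturbation in the Neumann eigenfunctions of $-\frac{d^2}{dx^2}$ on $(0,L)$ with eigenvalues $\mu_k=(k\pi/L)^2$, the $k$-th mode is governed by $M_k=J-\mu_k\,\mathrm{diag}(D_1,D_2)$. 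Then $\mathrm{tr}\,M_k=-(b_1\bar u+c_2\bar v)-(D_1+D_2)\mu_k<0$ for all $k\ge0$, while $\det M_k=(b_1c_2-b_2c_1)\bar u\bar v+(c_2\bar v D_1+b_1\bar u D_2)\mu_k+D_1D_2\mu_k^2$. Under the weak competition inequality in (\ref{34}) one has $b_1c_2-b_2c_1>0$, so $\det M_k>0$ for every $k$; hence all modes decay and the principle of linearized stability (in the semigroup framework of Section \ref{section2}) gives asymptotic stability. I would note that this inequality is essential, since in the strong competition case $\det J<0$ makes the homogeneous mode already unstable, so the stability assertion is genuinely tied to the weak regime.

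For uniqueness under (i) I would build a Volterra-type Lyapunov functional $\mathcal E[u,v]=\int_0^L\big[k_1(u-\bar u-\bar u\ln\tfrac{u}{\bar u})+k_2(v-\bar v-\bar v\ln\tfrac{v}{\bar v})\big]\,dx$ with weights $k_1,k_2>0$ to be chosen. Differentiating along (\ref{35}) and integrating by parts yields the nonpositive diffusion contribution $-\int_0^L(k_1D_1\bar u\,u^{-2}(u')^2+k_2D_2\bar v\,v^{-2}(v')^2)\,dx$, while the reaction part, after using $\frac{f}{u}=\frac{1-H_1}{H_1}$ with $H_1=a_1+b_1u+c_1v$ (and likewise for $g$, $H_2$), becomes $-\int_0^L Q\,dx$, where $Q$ is the quadratic form in $(u-\bar u,v-\bar v)$ whose matrix has diagonal $\frac{k_1b_1}{H_1},\frac{k_2c_2}{H_2}$ and off-diagonal $\frac12(\frac{k_1c_1}{H_1}+\frac{k_2b_2}{H_2})$. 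The a priori bounds of Section \ref{section2} confine $H_1,H_2$ to a compact subinterval of $(0,\infty)$, and the discriminant computation shows that $b_1c_2>b_2c_1$ is exactly the condition opening a nonempty window of admissible ratios $k_1H_2/(k_2H_1)$ on which $Q$ is positive definite. With such weights $\frac{d}{dt}\mathcal E\le0$, with equality only at $(\bar u,\bar v)$; since any steady state is a stationary point of the flow, $\frac{d}{dt}\mathcal E=0$ forces it to coincide with $(\bar u,\bar v)$, giving uniqueness (and, via LaSalle, global attractivity).

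For case (ii) I would argue by spatial homogenization. Testing the two stationary equations against $u-\bar u_*$ and $v-\bar v_*$ (averages denoted $\bar u_*,\bar v_*$), using the uniform $L^\infty$ bound to obtain a Lipschitz constant for $(f,g)$ and then Poincaré's inequality $\|w-\bar w_*\|_{L^2}\le C_P\|w'\|_{L^2}$, gives coupled inequalities for $X=\|u'\|_{L^2}$, $Y=\|v'\|_{L^2}$ of the shape $D_1X^2\le C(X^2+XY)$ and $D_2Y^2\le C(Y^2+XY)$. If \emph{both} diffusivities were large this would immediately force $X=Y=0$; the essential difficulty, and the main obstacle of the whole theorem, is that only $\max\{D_1,D_2\}$ is assumed large, so this energy estimate alone controls only one variable. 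I would close the gap by a compactness/shadow reduction: along a sequence with, say, $D_1\to\infty$, elliptic estimates extract a limit in which $u$ is constant and $v$ solves the scalar problem $D_2v''+v\psi_2(c,v)=0$ on $(0,L)$ with $\psi_2(c,\cdot)$ strictly decreasing; a phase-plane energy argument (the conserved quantity $\frac{D_2}{2}(v')^2+\Psi_2(v)$ with single-humped potential) shows this Neumann problem admits no nonconstant positive solution, so the limit is the constant $(\bar u,\bar v)$. Finally, once one diffusivity is large, $\det M_k\ne0$ for all $k$ (the $\mu_k$-linear term dominates the possibly negative constant term for $k\ge1$, and $\det M_0=\det J\ne0$), so $(\bar u,\bar v)$ is non-degenerate and isolated, and the implicit function theorem upgrades ``close to $(\bar u,\bar v)$'' into ``equal to $(\bar u,\bar v)$,'' completing the uniqueness proof.
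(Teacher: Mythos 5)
Your proposal takes a genuinely different route from the paper: the paper disposes of uniqueness in one line by computing the kinetic Jacobian $\mathcal{M}$ at $(\bar u,\bar v)$ and invoking Theorem 3.1 of \cite{LN}, and it obtains stability by combining the same mode-by-mode linearization you perform with the monotone-semiflow theory of \cite{MOW} and \cite{S} (which yields global, not merely local, asymptotic stability). Your linearized computation for case (i) is correct and coincides with the paper's first step. The genuine gap is in your Lyapunov proof of uniqueness under (i). The quadratic form $Q$ has coefficients depending on $x$ through $H_1(x)$ and $H_2(x)$, whereas you may only tune the constants $k_1,k_2$: positive definiteness at a point $x$ is equivalent to $t(x):=k_1H_2(x)/\bigl(k_2H_1(x)\bigr)$ lying in the window $(t_-,t_+)$ whose endpoints are the roots of $c_1^2t^2-(4b_1c_2-2b_2c_1)t+b_2^2$. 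Weak competition $b_1c_2>b_2c_1$ makes this window nonempty, as your discriminant computation shows, but nonemptiness is not sufficient: a single choice of $k_1/k_2$ must place the \emph{entire range} of the solution-dependent function $t(x)$ inside the window. Setting $K=b_1c_2/(b_2c_1)$, one computes $t_+/t_-=\bigl(2K-1+2\sqrt{K(K-1)}\bigr)^2\rightarrow1$ as $K\rightarrow1^+$, while the a priori bounds only confine $H_1,H_2$ to fixed intervals $(a_1,\,1+c_1(1-a_2)/c_2]$ and $(a_2,\,1+b_2(1-a_1)/b_1]$, so the admissible oscillation of $H_2/H_1$ does not shrink as $K\rightarrow1$. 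Hence when $b_1c_2$ is only slightly larger than $b_2c_1$, no constant weights make $Q\geq0$ pointwise, and the functional is not monotone. This is precisely where Beddington--DeAngelis differs from Lotka--Volterra, where $H_1,H_2$ are constant and the classical argument closes; the paper's own citation of \cite{LN}, a theorem proved for Lotka--Volterra kinetics, is open to the same objection, but that does not repair your step.

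Two further points. First, your remark that stability cannot hold in case (ii) is mathematically correct: under (\ref{34}), case (ii) is the strong-competition case, the determinant of the kinetic Jacobian equals $(b_1c_2-b_2c_1)\bar u\bar v<0$, so the $k=0$ mode is unstable for every $D_1,D_2$ --- diffusion cannot remove an ODE instability under Neumann conditions. This contradicts the statement as written and in fact identifies an error in the paper's own proof, which asserts that the stability matrices have two negative eigenvalues ``if either (i) or (ii) occurs''; that assertion is false at $k=0$ in case (ii). So your argument establishes the stability assertion in the only case where it holds, but you should say explicitly that you are disproving, not proving, the case-(ii) stability claim. Second, your shadow-system argument for uniqueness under (ii) is the right strategy (it is essentially how ``one large diffusion rate'' nonexistence results are proved), but as sketched it has two unfinished steps: (a) the compactness limit of a sequence of nonconstant positive steady states need not be $(\bar u,\bar v)$ --- you must also exclude the semi-trivial limits $(0,\frac{1-a_2}{c_2})$, $(\frac{1-a_1}{b_1},0)$ and $(0,0)$, e.g.\ via Harnack-type lower bounds or a separate nondegeneracy argument at those states; and (b) the final implicit-function-theorem step must be set up uniformly in the parameter $s=1/D_1\rightarrow0$ around the shadow system (as the paper itself does in Section \ref{section4}), since isolation of $(\bar u,\bar v)$ at each fixed $D_1$ comes with a neighborhood that could a priori shrink as $D_1\rightarrow\infty$, which would not contradict the convergence of your sequence.
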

\begin{proof}
Denote
\[f_0(u,v)=-1+\frac{1}{a_1+b_1u+c_1v},g_0(u,v)=-1+\frac{1}{a_2+b_2u+c_2v},\]
and we rewrite (\ref{35}) as
\begin{equation}
\left\{
\begin{array}{ll}
u_t=D_1 u''+f_0(u,v)u,&x \in (0,L),t>0\\
v_t=D_2 v''+g_0(u,v)v,&x \in (0,L),t>0\\
u'(x)=v'(x)=0,&x=0,L, t>0,\\
u(x,0)=u_0(x), v(x,0)=v_0(x),&x\in (0,L).
\end{array}
\right.
\end{equation}
Then the matrix
\begin{equation*}
\mathcal{M}=\Big(\frac{\partial (f_0,g_0)}{\partial (u,v)}\Big)\Big\vert_{(\bar u,\bar v)}=
\begin{pmatrix}
\frac{\partial f_0}{\partial u} & \frac{\partial f_0}{\partial v} \\
\frac{\partial g_0}{\partial u} & \frac{\partial g_0}{\partial v}
\end{pmatrix}\Big\vert_{(\bar u,\bar v)}
=\begin{pmatrix}
-b_1 & -c_1 \\
-b_2 & -c_2
\end{pmatrix},
\end{equation*}
has determinant $\vert \mathcal{M}\vert=(b_1c_2-b_2c_1)$.  According to Theorem 3.1 of \cite{LN}, $(\bar u,\bar v)$ is the only steady state of (\ref{35}) if either \emph{(i)} or \emph{(ii)} holds.

On the other hand, it follows from straightforward calculations that the linearized stability matrix corresponding to system (\ref{35}) at $(\bar u,\bar v)$ is
\begin{equation*}
\begin{pmatrix}
-D_1\big(\frac{k\pi}{L}\big)^2-b_1\bar u  & -c_1\bar u \\
-b_2\bar v& -D_2\big(\frac{k\pi}{L}\big)^2-c_2\bar v
\end{pmatrix},
\end{equation*}
which has two negative eigenvalues if either \emph{(i)} or \emph{(ii)} occurs, then by the same analysis in Theorem 2.5 in \cite{MOW} or \cite{S}, one can show that system (\ref{35}) generates a strongly monotone semi--flow on $C([0,L],\mathbb{R}^2)$ with respect to $\{(u,v)\in C([0,L],\mathbb{R}^2) \vert u>0,v>0 \}$, hence $(\bar u,\bar v)$ is globally asymptotically stable.  This completes the proof of Theorem \ref{theorem31}.
\end{proof}

Our results indicate that the global dynamics of the diffusion system (\ref{35}) is dominated by the ODEs in the weak competition case and also in the strong case if one of the diffusion rates is large.  However, if both $D_1$ and $D_2$ are small, $(\bar u,\bar v)$ is unstable in the strong competition case.  We surmise that positive solutions with nontrivial patterns may arise when the system parameters and the domain geometry are properly balanced.  Nonconstant positive solutions with spikes are investigated for the diffusion system (\ref{31}) with Lotka--Volterra dynamics by various authors.  See \cite{KY,MM,M1,MEF}.

\subsection{Advection--driven instability}
Theorem \ref{31} states that diffusion does not change the dynamics of the spatially homogeneous solution and no Turing's instability occurs for the diffusive system (\ref{35}) in most cases.  We proceed to investigate the effect of advection $\chi$ on the emergence of nonconstant positive solutions to (\ref{31}).  We shall show that this equilibrium loses its stability as the advection rate $\chi$ crosses a threshold value.  To this end, we first study the linearized stability of the equilibrium $(\bar u,\bar v)$.  Let $(u,v)=(\bar u,\bar v)+(U,V)$, where $U$ and $V$ are small perturbations from $(\bar u,\bar v)$, then
\begin{equation}\label{37}
\left\{
\begin{array}{ll}
U_t\approx D_1 U^{''}+\chi \bar u \phi(\bar v) V^{''}-b_1\bar uU-c_1\bar u V,&x \in (0,L),t>0,\\
V_t\approx D_2V^{''}-b_2\bar vU-c_2\bar vV,&x \in (0,L),t>0,\\
U'(x)=V'(x)=0,&x=0,L,t>0.
\end{array}
\right.
\end{equation}
We have the following result on the linearized instability of $(\bar u,\bar v)$ to (\ref{31}).
\begin{proposition}\label{proposition1}
The constant solution $(\bar u,\bar v)$ of (\ref{31}) is unstable if
\begin{equation}\label{38}
\chi>\chi_0=\min_{k \in \mathbb{N^+} } \frac{\big(D_1(\frac{k\pi}{L})^2+b_1\bar u\big)\big(D_2(\frac{k\pi}{L})^2+c_2\bar v\big)-b_2c_1\bar u\bar v}{b_2(\frac{k\pi}{L})^2\phi(\bar v)\bar u\bar v}.
\end{equation}
\end{proposition}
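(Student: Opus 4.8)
The plan is to carry out a standard linearized stability analysis of the equilibrium through the eigenvalue problem associated with (\ref{37}) on Neumann eigenmodes. First I would expand the perturbation $(U,V)$ in the $L^2(0,L)$ basis of Neumann eigenfunctions of $-\frac{d^2}{dx^2}$ on $(0,L)$, namely $\{\cos\frac{k\pi x}{L}\}_{k\geq0}$ with eigenvalues $\mu_k=(\frac{k\pi}{L})^2$. Since (\ref{37}) is a constant--coefficient linear system with Neumann data, each mode $\cos\frac{k\pi x}{L}$ is invariant, so writing $(U,V)=(a_k,b_k)\cos\frac{k\pi x}{L}\,e^{\lambda t}$ and using $U''=-\mu_kU$, $V''=-\mu_kV$ reduces the stability question to the spectra of the $2\times2$ matrices
\[
\mathcal{A}_k=\begin{pmatrix} -D_1\mu_k-b_1\bar u & -\chi\bar u\phi(\bar v)\mu_k-c_1\bar u \\ -b_2\bar v & -D_2\mu_k-c_2\bar v \end{pmatrix},\qquad k\in\mathbb{N}.
\]
The equilibrium is linearly unstable precisely when some $\mathcal{A}_k$ possesses an eigenvalue with positive real part; since the linearized operator is sectorial (as $-\Delta+1$ is, by the preliminaries of Section \ref{section2}), this spectral instability transfers to genuine instability via the principle of linearized instability.

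The key observation is that a $2\times2$ matrix has an eigenvalue in the open right half--plane if and only if either $\mathrm{tr}\,\mathcal{A}_k>0$ or $\det\mathcal{A}_k<0$. Here $\mathrm{tr}\,\mathcal{A}_k=-(D_1+D_2)\mu_k-b_1\bar u-c_2\bar v<0$ for every $k$, so the trace route is never available and the entire instability must come from the determinant. I would then compute
\[
\det\mathcal{A}_k=\big(D_1\mu_k+b_1\bar u\big)\big(D_2\mu_k+c_2\bar v\big)-b_2c_1\bar u\bar v-\chi\, b_2\bar u\bar v\phi(\bar v)\mu_k,
\]
which for each fixed $k\geq1$ is an affine strictly decreasing function of $\chi$. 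Setting $\det\mathcal{A}_k<0$ and solving for $\chi$ shows that the $k$--th mode is destabilized exactly when $\chi$ exceeds the quotient appearing in (\ref{38}) for that $k$.

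Finally I would take the minimum over $k$. For $k\geq1$ the denominator $b_2\mu_k\phi(\bar v)\bar u\bar v$ is strictly positive and the quotient in (\ref{38}) grows like $D_1D_2\mu_k/(b_2\phi(\bar v)\bar u\bar v)\to\infty$ as $k\to\infty$, so the minimum $\chi_0$ over the discrete set $\mathbb{N}^+$ is attained at some finite wavemode $k_0$. If $\chi>\chi_0$, then $\chi$ exceeds the quotient for $k=k_0$, whence $\det\mathcal{A}_{k_0}<0$ and $\mathcal{A}_{k_0}$ admits a real positive eigenvalue. This yields an exponentially growing mode proportional to $e^{\lambda t}\cos\frac{k_0\pi x}{L}$ and establishes the claimed instability; note that only the sufficiency direction is asserted, which is exactly what this argument delivers.

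I expect no serious analytic difficulty here, as the computation is routine once the mode decomposition is in place. The points demanding care are the sign bookkeeping in assembling $\mathcal{A}_k$ (in particular that the advection term $\chi\bar u\phi(\bar v)V''$ contributes the factor $-\chi\bar u\phi(\bar v)\mu_k$ to the off--diagonal entry), and the justification that spectral instability of the finite--dimensional modes implies instability of the full steady state, which rests on the sectoriality recorded in Section \ref{section2}. The $k=0$ mode, for which the denominator in (\ref{38}) degenerates, is deliberately excluded from the minimum: its stability is governed purely by the kinetics through $\det\mathcal{M}=(b_1c_2-b_2c_1)\bar u\bar v$ and is irrelevant to the advection--driven mechanism encoded by $\chi_0$.
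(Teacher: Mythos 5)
Your proposal is correct and follows essentially the same route as the paper: both reduce the linearization to the family of $2\times2$ matrices $\mathcal{H}_k$ on the Neumann modes $\cos\frac{k\pi x}{L}$, observe that the trace is always negative, and derive instability from the sign of the determinant $D_k<0$, which upon solving for $\chi$ yields exactly the threshold in (\ref{38}). Your extra remarks (sectoriality, attainment of the minimum at a finite wavemode, exclusion of $k=0$) only make explicit points the paper leaves implicit.
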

\begin{proof}
According to the standard linearized stability analysis, the stability of $(\bar u,\bar v)$ is determined by the eigenvalues of the following matrix
\begin{equation}\label{39}
 \mathcal H_k=
\begin{pmatrix} -D_1(\frac{k\pi}{L})^2-b_1\bar u& -\chi\bar u\phi(\bar v)(\frac{k\pi}{L})^2-c_1\bar u\\
-b_2 \bar v &-D_2(\frac{k\pi}{L})^2-c_2\bar v \end{pmatrix}, k\in \mathbb N^+.
\end{equation}
In particular, $(\bar u,\bar v)$ is unstable if $\mathcal H_k$ has an eigenvalue with positive real part for some $k\in \mathbb{N^+}$. It is easy to see that the characteristic polynomial of (\ref{39}) takes the form
\[p_k(\lambda)=\lambda^2+T_k\lambda+D_k,\]
where \[T_k=(D_1+D_2)(\frac{k\pi}{L})^2+b_1\bar u+c_2\bar v>0,\]
and\[D_k=\big(D_1(\frac{k\pi}{L})^2+b_1\bar u\big)\big(D_2(\frac{k\pi}{L})^2+c_2\bar v\big)-\big(\chi\bar u\phi(\bar v)(\frac{k\pi}{L})^2+c_1\bar u\big)b_2\bar v.\]
$p_k(\lambda)$ has a positive root if and only if $p_k(0)=D_k<0$, then (\ref{38}) follows from simple calculations and the proof completes.
\end{proof}
We have from Proposition \ref{proposition1} that $(\bar u,\bar v)$ loses its stability when $\chi$ surpasses $\chi_0$.  In the weak competition case $\frac{b_1}{b_2}>\frac{1-a_1}{1-a_2}>\frac{c_1}{c_2}$, we see that $\chi_0$ is always positive, hence $(\bar u,\bar v)$ remains locally stable for $\chi$ being small.  In the strong competition case $\frac{b_1}{b_2}<\frac{1-a_1}{1-a_2}<\frac{c_1}{c_2}$, $\chi_0<0$ if both $D_1$ and $D_2$ are sufficiently small.  This corresponds to the fact that $(\bar u,\bar v)$ is unstable for $\chi=0$ in (\ref{31}) in this case.  By the same stability analysis above, we can show that the appearance of advection rate $\chi$ does not change the stability of the rest equilibrium points.  It is also worthwhile to point out that Proposition \ref{proposition1} carries over to higher dimensions with $(\frac{k\pi}{L})^2$ replaced by the $k$--eigenvalue of Neumann Laplacian.

\subsection{Steady state bifurcation}
To establish the existence of nonconstant positive solutions to (\ref{31}), we shall use the bifurcation theory due to Crandall--Rabinowitz \cite{CR} by taking $\chi$ as the bifurcation parameter.  To this end, we rewrite (\ref{31}) into the following abstract form
\[\mathcal{F}(u,v,\chi)=0,~(u,v,\chi) \in \mathcal{X}  \times \mathcal{X} \times \mathbb{R},\]
where
\begin{equation}\label{310}
\mathcal{F}(u,v,\chi) =\left(
 \begin{array}{c}
(D_1 u'+\chi u \phi(v)v')'+(-1+\frac{1}{a_1+b_1u+c_1 v})u\\
~~\\
D_2v^{''}+(-1+\frac{1}{a_2+b_2u+c_2 v})v
 \end{array}
 \right),
 \end{equation}
and $\mathcal X=\{H^2(0,L)\vert u'(0)=u'(L)=0\}$.  We collect some facts about $\mathcal{F}$.  First of all, $\mathcal{F}(\bar u,\bar v,\chi)=0$ for any $\chi\in \mathbb{R}$ and $\mathcal{F}:\mathcal{X} \times \mathbb{R}\times \mathbb{R}\rightarrow \mathcal{Y}\times\mathcal{Y}$ is analytic for $\mathcal{Y}=L^2(0,L)$.  For any fixed $(u_0,v_0) \in \mathcal{X}\times\mathcal{X}$, the Fr\'echet derivative of $\mathcal{F}$ is
\begin{equation}\label{311}
D_{(u,v)}\mathcal{F}(u_0,v_0,\chi)(u,v)=\left(
\begin{array}{c}
D_1u^{''}+\chi\big((\phi(v_0)u+u_0\phi^{'}(v_0)v)v_0^{'}+u_0\phi(v_0)v'\big)^{'}+R\\
~~\\
D_2 v^{''}-\frac{b_2v_0}{(a_2+b_2u_0+c_2v_0)^2}u+\big(-1+\frac{a_2+b_2u_0}{(a_2+b_2u_0+c_2v_0)^2}\big)v
\end{array}
\right),
\end{equation}
where $R=\big(-1+\frac{a_1+c_1v_0}{(a_1+b_1u_0+c_1v_0)^2}\big)u-\frac{c_1u_0}{(a_1+b_1u_0+c_1v_0)^2}v$.
By the same arguments that lead to (iv) of Lemma 5.1 in \cite{CKWW} or Lemma 2.3 of \cite{WX}, one can show that $D_{(u,v)}\mathcal{F}(u_0,v_0,\chi)$ is Fredholm with zero index.

For bifurcation to occur at $(\bar u, \bar v,\chi)$, we need the Implicit Function Theorem to fail on $\mathcal{F}$ at this point, hence we require the following necessary condition $\mathcal{N}\big(D_{(u,v)}\mathcal{F}(\bar u,\bar v,\chi)\big)\neq {0}$.  Let $(u,v)$ be a nontrivial solution in this null--space, then it satisfies the following system
\begin{equation}\label{312}
\left\{
\begin{array}{ll}
D_1 u^{''}+\chi\bar u\phi(\bar v)v^{''}-b_1\bar u u-c_1\bar uv=0,&x \in (0,L),\\
D_2  v''-b_2\bar vu-c_2\bar vv=0,&x \in (0,L),\\
u'(x)=v'(x)=0,&x=0,L.
\end{array}
\right.
\end{equation}
Expanding $u$ and $v$ into the following series
\[u(x)=\sum_{k=0}^{\infty}t_k\cos\frac{k\pi x}{L}, v(x)=\sum_{k=0}^{\infty}s_k\cos\frac{k\pi x}{L},\]
and substituting them into (\ref{312}) yield
\begin{equation}\label{313}
\begin{pmatrix}
-D_1(\frac{k\pi}{L})^2-b_1\bar u & -\chi\bar u\phi(\bar v)(\frac{k\pi}{L})^2-c_1\bar u  \\
~~\\
-b_2\bar v & -D_2(\frac{k\pi}{L})^2-c_2\bar v
\end{pmatrix}
\begin{pmatrix}
t_k\\
~~\\
s_k
\end{pmatrix}\!\!=\!\!\begin{pmatrix}
\!0\\
~~\\
0
\end{pmatrix}.
\end{equation}
$k=0$ is ruled out in (\ref{312}) thanks to (\ref{34}).  For $k\in \mathbb{N^+}$, (\ref{313}) has nonzero solutions if and only if its coefficient matrix is singular which implies that local bifurcation might occur at
\begin{equation}\label{314}
\chi=\chi_k=\frac{\big(D_1(\frac{k\pi}{L})^2+b_1\bar u\big)\big(D_2(\frac{k\pi}{L})^2+c_2\bar v\big)-b_2c_1\bar u\bar v}{b_2(\frac{k\pi}{L})^2\phi(\bar v)\bar u\bar v}, k\in \mathbb N^+.
\end{equation}
Moreover, the null space is one--dimensional and has a span
\[\mathcal{N}\big(D_{(u,v)}\mathcal{F}(\bar u,\bar v,\chi_k)\big)=\text{span}\{(\bar u_k,\bar v_k)\},\]
where
\begin{equation}\label{315}
(\bar u_k,\bar v_k)=(Q_k,1)\cos\frac{k\pi x}{L}, Q_k=-\frac{D_2\big(\frac{k\pi}{L}\big)^2+c_2\bar v}{b_2\bar v}.
\end{equation}
Having the candidates for bifurcation values $\chi_k$, we now show that local bifurcation does occur at $(\bar u,\bar v,\chi_k)$ in the following theorem, which establishes nonconstant positive solutions to (\ref{31}).

\begin{theorem}\label{theorem32}
Suppose that $\phi\in C^2(\mathbb{R},\mathbb{R})$ and $\phi(v)>0$ for all $v>0$. Assume that (\ref{34}) holds, and for all positive different integers $k,j\in\mathbb{N^+}$,
\begin{equation}\label{316}
(b_1c_2-b_2c_1)\bar u\bar v\neq k^2j^2D_1D_2(\frac{\pi}{L})^4, k\neq j,
\end{equation}
where $(\bar u,\bar v)$ is the positive equilibrium of (\ref{31}) given in (\ref{33}).  Then for each $k\in\mathbb{N}^+$, there exists $\delta>0$ small such that (\ref{31}) admits nonconstant solutions
$\big(u_k(s,x),v_k(s,x),\chi_k(s)\big)\in\mathcal{X}\times \mathcal{X}\times \mathbb{R^+}$ with $\big(u_k(0,x),v_k(0,x), \chi_k(0)\big)=(\bar u,\bar v,\chi_k)$.  The solutions are continuous functions of $s$ in the topology of $\mathcal X\times \mathcal X\times \mathbb R$ and have the following expansions for $\vert s\vert$ being small,
\begin{equation}\label{317}
(u_k(s,x),v_k(s,x))=(\bar u,\bar v)+s(Q_k,1)\cos\frac{k\pi x}{L}+o(s),
\end{equation}
and $\big(u_k(s,x),v_k(s,x)\big)-(\bar u,\bar v)-s(Q_k,1)\cos\frac{k\pi x}{L}\in \mathcal{Z}$ where
\begin{equation}\label{318}
\mathcal{Z}=\big\{(u,v)\in\mathcal{X}\times \mathcal{X}\big\vert\int_0^L u\bar u_k+v\bar v_k dx=0\big\}.
\end{equation}
with $(\bar u_k,\bar v_k)$ and $Q_k$ defined in (\ref{315}); moreover, all nontrivial solutions of (\ref{31}) around $(\bar u,\bar v,\chi_k)$ must stay on the curve $\Gamma_k(s)=\big(u_k(s,x),v_k(s,x),\chi_k(s)\big)$, $s\in (-\delta,\delta)$.
\end{theorem}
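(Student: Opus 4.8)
The plan is to verify the hypotheses of the Crandall--Rabinowitz local bifurcation theorem (\cite{CR}, in the form used by Shi--Wang \cite{SW}) for the map $\mathcal{F}$ defined in (\ref{310}), taking $\chi$ as the bifurcation parameter and bifurcating from the trivial branch $(\bar u,\bar v,\chi)$ at the candidate value $\chi=\chi_k$ given in (\ref{314}). The recipe requires four ingredients: that $\mathcal{F}$ be smooth (already noted, since $\phi\in C^2$ makes $\mathcal{F}$ analytic into $\mathcal{Y}\times\mathcal{Y}$); that the kernel $\mathcal{N}\big(D_{(u,v)}\mathcal{F}(\bar u,\bar v,\chi_k)\big)$ be one--dimensional; that the cokernel have the same dimension (codimension one range), which follows from the Fredholm index--zero property already established via \cite{CKWW,WX}; and, crucially, the transversality condition that the mixed second derivative $D_{\chi}D_{(u,v)}\mathcal{F}(\bar u,\bar v,\chi_k)$ applied to the kernel generator $(\bar u_k,\bar v_k)$ does \emph{not} lie in the range of $D_{(u,v)}\mathcal{F}(\bar u,\bar v,\chi_k)$.

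\textbf{Kernel computation.}
First I would pin down the one--dimensionality of the kernel. Linearizing at $(\bar u,\bar v)$ gives system (\ref{312}); expanding in the Neumann cosine basis reduces the kernel condition to the singularity of the $2\times 2$ matrix in (\ref{313}) for some mode $k$. The determinant vanishes exactly at $\chi=\chi_k$, and here the nondegeneracy hypothesis (\ref{316}) plays its role: it guarantees that the values $\chi_k$ are mutually distinct across modes, so that at $\chi=\chi_k$ \emph{only} the $k$--th mode is singular and no other integer $j\neq k$ produces a second kernel direction. This forces the kernel to be precisely $\mathrm{span}\{(Q_k,1)\cos\frac{k\pi x}{L}\}$ with $Q_k$ as in (\ref{315}), giving $\dim\mathcal{N}=1$. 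The equality $\dim\mathcal{N}=\mathrm{codim}\,\mathcal{R}=1$ then follows from the zero Fredholm index.

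\textbf{Transversality --- the main obstacle.}
The step I expect to be most delicate is the transversality condition, which is what genuinely distinguishes a bifurcation point from a mere degeneracy. Differentiating $\mathcal{F}$ in (\ref{310}) with respect to $\chi$ and then linearizing in $(u,v)$, the only $\chi$--dependence sits in the advection term $(u\phi(v)v')'$, so $D_{\chi}D_{(u,v)}\mathcal{F}(\bar u,\bar v,\chi_k)(\bar u_k,\bar v_k)$ reduces to the vector $\big(\bar u\phi(\bar v)\bar v_k'',\,0\big)=\big(-\bar u\phi(\bar v)(\frac{k\pi}{L})^2\cos\frac{k\pi x}{L},\,0\big)$. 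To check this is outside the range, I would use the Fredholm alternative: the range is the orthogonal complement (in $\mathcal{Y}\times\mathcal{Y}$) of the kernel of the adjoint operator. I would solve for the adjoint kernel generator --- again a cosine--mode eigenvector of the transposed matrix --- and then verify that the $L^2$--pairing of the above vector against it is nonzero. Because that pairing collapses to an explicit multiple of $(\frac{k\pi}{L})^2\bar u\phi(\bar v)$ times a nonvanishing combination of the matrix entries (essentially $\frac{\partial}{\partial\chi}\det$ of the mode matrix, which is nonzero precisely because $b_2\bar u\bar v\phi(\bar v)(\frac{k\pi}{L})^2>0$), transversality holds.

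\textbf{Conclusion.}
With all four hypotheses in place, Crandall--Rabinowitz yields a local $C^0$ curve $\Gamma_k(s)=\big(u_k(s,x),v_k(s,x),\chi_k(s)\big)$ through $(\bar u,\bar v,\chi_k)$ for $|s|<\delta$, and the theorem's structure gives the leading--order expansion (\ref{317}) with the higher--order remainder lying in the complement $\mathcal{Z}$ defined in (\ref{318}); the local uniqueness clause of the theorem gives that all nontrivial solutions near $(\bar u,\bar v,\chi_k)$ lie on $\Gamma_k$. Finally, positivity of $u_k,v_k$ for small $|s|$ follows from continuity, since $(\bar u,\bar v)$ is a strictly positive constant and the perturbation is $O(s)$ in $\mathcal{X}\hookrightarrow C[0,L]$; and $\chi_k(s)>0$ for small $s$ follows from $\chi_k>0$ in the weak competition regime together with continuity of $\chi_k(s)$.
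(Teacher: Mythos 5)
Your proposal is correct and takes essentially the same approach as the paper: the Crandall--Rabinowitz theorem applied to $\mathcal{F}$ with $\chi$ as the bifurcation parameter, the same cosine--mode kernel computation with (\ref{316}) guaranteeing $\chi_k\neq\chi_j$ (hence a one--dimensional kernel), the Fredholm index--zero property, and the transversality condition (\ref{319}) as the crux. The only, essentially cosmetic, difference is in how transversality is checked: you verify it directly via the Fredholm alternative, pairing $\big(-\bar u\phi(\bar v)(\frac{k\pi}{L})^2\cos\frac{k\pi x}{L},0\big)$ against the adjoint kernel, whereas the paper argues by contradiction, testing the would--be preimage system (\ref{320}) against $\cos\frac{k\pi x}{L}$ to arrive at the unsolvable singular system (\ref{321}); both verifications collapse to the same $2\times 2$ linear algebra for the singular mode matrix, your nonzero pairing being precisely the statement that the right--hand side of (\ref{321}) cannot lie in the range of that matrix.
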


\begin{proof}
Our results follow from Theorem 1.7 of Crandall and Rabinowitz \cite{CR} once we prove the following transversality condition,
\begin{equation}\label{319}
\frac{d}{d \chi} \left(D_{(u,v)}\mathcal{F}(\bar{u},\bar{v},\chi)\right)(\bar u_k,\bar v_k)\vert_{\chi=\chi_k} \notin \mathcal{R}(D_{(u,v)}\mathcal{F}(\bar{u},\bar{v},\chi_k)),
\end{equation}
where $(\bar u_k,\bar v_k)$ is given in (\ref{315}) and $\mathcal{R}(\cdot)$ denotes range of the operator.  We argue by contradiction and assume that (\ref{319}) fails.  Therefore there exist a nontrivial pair $(u,v)$ to the following problem
\begin{equation}\label{320}
\left\{\!\!
\begin{array}{ll}
D_1 u^{''}+\chi_k\bar{u}\phi(\bar{v}) v^{''}-b_1\bar uu-c_1\bar uv
=- (\frac{k\pi}{L})^2\bar u\phi(\bar v)\cos\frac{k\pi x}{L} ,&x\in(0,L),\\
D_2  v''-b_2\bar{v}u-c_2\bar{v}v=0,&x\in(0,L),\\
u'(x)=v'(x)=0,&x=0,L.
\end{array}
\right.
\end{equation}
Testing the first two equations in (\ref{320}) by $\cos\frac{k\pi x}{L}$ over $(0,L)$ yields
\begin{equation}\label{321}
\begin{pmatrix} -D_1(\frac{k\pi}{L})^2-b_1\bar u& -\chi_k\bar u\phi(\bar v)(\frac{k\pi}{L})^2-c_1\bar u\\
~~\\
-b_2\bar v&-D_2(\frac{k\pi}{L})^2-c_2\bar v \end{pmatrix}\begin{pmatrix}
\int_0^L u\cos\frac{k\pi x}{L}dx\\
~~\\
\int_0^L v\cos\frac{k\pi x}{L}dx
\end{pmatrix}\!\!=\!\!\begin{pmatrix}
\!-\frac{(k\pi)^2\bar u\phi(\bar v)}{2L}\\
~~\\
0
\end{pmatrix}\!\!
\end{equation}
The coefficient matrix is singular due to (\ref{314}), then we reach a contradiction in (\ref{321}) and this proves (\ref{319}).  On the other hand, we need condition (\ref{316}) such that $\chi_k\neq\chi_j$ for all integers $k\neq j$, which is also required for the application of the local theory in \cite{CR}.  The proof of Theorem \ref{theorem32} is complete.
\end{proof}
\subsection{Stability analysis of the nonconstant steady states}

We proceed to study stabilities of the nontrivial bifurcating solutions $(u_k(s,x),v_k(s,x),\chi_k(s))$ obtained in Theorem \ref{theorem32}.  Here the stability or instability means that of the nonconstant solution viewed as an equilibrium of the time-dependent system of (\ref{31}).  $\mathcal F$ is $C^4$-smooth in $s$ if $\phi$ is $C^4$, therefore, according to Theorem 1.18 in \cite{CR}, $(u_k(s,x),v_k(s,x),\chi_k(s))$ is $C^3$-smooth, we have the following expansions,
\begin{equation}\label{322}
\left\{
\begin{array}{ll}
u_k(s,x)=\bar u+sQ_k\cos\frac{k\pi x}{L}+s^2\psi_1(x)+s^3\psi_2(x)+o(s^3),\\
v_k(s,x)=\bar v+s\cos\frac{k\pi x}{L}+s^2\varphi_1(x)+s^3\varphi_2(x)+o(s^3),\\
\chi_k(s)=\chi_k+sK_1+s^2K_2+o(s^2).
\end{array}
\right.
\end{equation}
where $(\psi_i,\varphi_i)\in \mathcal{Z}$ as defined in (\ref{318}) and $K_i$ is a constant for $i=1,2$.  Moreover, we have from Taylor's Theorem that
\begin{equation}\label{323}
\phi\big(v_k(s,x)\big)=\phi(\bar v)+s\phi^{'}(\bar v)\cos\frac{k\pi x}{L}+s^2\Big(\phi^{'}(\bar v)\varphi_1+\frac{1}{2}\phi^{''}(\bar v)\cos^2\frac{k\pi x}{L}\Big)+o(s^3).
\end{equation}
$o(s^3)$-terms in (\ref{322}) and (\ref{323}) are taken in $H^2$-topology.  For the sake of simplicity, we introduce the following notations
\begin{equation}\label{324}
\begin{split}
P_1=&\frac{1}{2}\Big(f_{uu}(\bar u,\bar v)Q_k^2+2f_{uv}(\bar u,\bar v)Q_k+f_{vv}(\bar u,\bar v)\Big)\\
   =&b_1(b_1\bar u-1)Q_k^2+c_1(2b_1\bar u-1)Q_k+c_1^2\bar u  \\
P_2=&\frac{1}{2}\Big(g_{uu}(\bar u,\bar v)Q_k^2+2g_{uv}(\bar u,\bar v)Q_k+g_{vv}(\bar u,\bar v)\Big)\\
   =&b_2^2\bar vQ_k^2+b_2(2c_2\bar v-1)Q_k+c_2(c_2\bar v-1)  \\
P_3=&\frac{1}{6}\Big(f_{uuu}(\bar u,\bar v)Q_k^3+3f_{uuv}(\bar u,\bar v)Q_k^2+3f_{uvv}(\bar u,\bar v)Q_k+f_{vvv}(\bar u,\bar v)\Big) \\
   =&b_1^2(1-b_1\bar u)Q_k^3+b_1c_1(2-3b_1\bar u)Q_k^2+c_1^2(1-3b_1\bar u)Q_k-c_1^3\bar u\\
P_4=&\frac{1}{6}\Big(g_{uuu}(\bar u,\bar v)Q_k^3+3g_{uuv}(\bar u,\bar v)Q_k^2+3g_{uvv}(\bar u,\bar v)Q_k+g_{vvv}(\bar u,\bar v)\Big) \\
   =&-b_2^3\bar vQ_k^3+b_2^2(1-3c_2\bar v)Q_k^2+b_2c_2(2-3c_2\bar v)Q_k+c_2^2(1-c_2\bar v)\\
P_5=&f_{uu}(\bar u,\bar v)Q_k+f_{uv}(\bar u,\bar v)=2b_1(b_1\bar u-1)Q_k+c_1(2b_1\bar u-1)\\
P_6=&f_{uv}(\bar u,\bar v)Q_k+f_{vv}(\bar u,\bar v)=c_1(2b_1\bar u-1)Q_k+2c_1^2\bar u\\
P_7=&g_{uu}(\bar u,\bar v)Q_k+g_{uv}(\bar u,\bar v)=2b_2^2\bar vQ_k+b_2(2c_2\bar v-1)\\
P_8=&g_{uv}(\bar u,\bar v)Q_k+g_{vv}(\bar u,\bar v)=b_2(2c_2\bar v-1)Q_k+2c_2(c_2\bar v-1).
\end{split}
\end{equation}

Substituting (\ref{322}) and (\ref{323}) into (\ref{31}) and equating the $s^2$-terms, we collect
\begin{equation}\label{325}
\left\{
\begin{array}{ll}
D_1\psi_1^{''}+\chi_k\bar u\phi(\bar v)\varphi_1^{''}-b_1\bar u\psi_1-c_1\bar u\varphi_1-K_1\bar u\phi(\bar v)(\frac{k\pi}{L})^2\cos\frac{k\pi x}{L} \\ =\chi_k(\frac{k\pi}{L})^2\big(\phi(\bar v)Q_k+\bar u\phi^{'}(\bar v)\big)\cos\frac{2k\pi x}{L}-P_1\cos^2\frac{k\pi x}{L},\\
D_2\varphi_1^{''}-b_2\bar v\psi_1-c_2\bar v\varphi_1=-P_2\cos^2\frac{k\pi x}{L},\\
\psi'_1(0)=\phi'_1(0)=0, \psi'_1(L)=\phi'_1(L)=0.
\end{array}
\right.
\end{equation}
Multiplying the first equation of (\ref{325}) by $\cos\frac{k\pi x}{L}$ and integrating it over $(0,L)$ by parts yield
\begin{equation}\label{326}
\begin{split}
\frac{(k\pi)^2\bar u\phi(\bar v)K_1}{2L}=&-\Big(\chi_k\bar u\phi(\bar v)(\frac{k\pi }{L})^2+c_1\bar u\Big)\int_0^L\varphi_1\cos\frac{k\pi x}{L}dx   \\
&-\Big(D_1(\frac{k\pi }{L})^2+b_1\bar u\Big)\int_0^L\psi_1\cos\frac{k\pi x}{L}dx.
\end{split}
\end{equation}
Multiplying the second equation by $\cos\frac{k\pi x}{L}$ and integrating it over $(0,L)$ by parts yield
\begin{equation}\label{327}
b_2\bar v\int_0^L\psi_1\cos\frac{k\pi x}{L}dx+\Big(D_2(\frac{k\pi}{L})^2+c_2\bar v\Big)\int_0^L\varphi_1\cos\frac{k\pi x}{L}dx=0.
\end{equation}
On the other hand, (\ref{318}) and the fact that $(\psi_1,\varphi_1)\in \mathcal{Z}$ give us
\begin{equation}\label{328}
Q_k\int_0^L\psi_1\cos\frac{k\pi x}{L}dx+\int_0^L\varphi_1\cos\frac{k\pi x}{L}dx=0,
\end{equation}
where $Q_k=-\frac{D_2(\frac{k\pi}{L})^2+c_2\bar v}{b_2\bar v}$. Solving (\ref{327}) and (\ref{328}) leads us to
\[(1+Q_k^2)\int_0^L\psi_1\cos\frac{k\pi x}{L}dx=0,\]
which implies that
\[\int_0^L\psi_1\cos\frac{k\pi x}{L}dx=\int_0^L\varphi_1\cos\frac{k\pi x}{L}dx=0,~ \forall k\in\mathbb{N}^+.\]
It follows  from (\ref{326}) that $K_1=0$, hence the bifurcation branch $\Gamma_k(s)$ around $\chi_k$ is of pitch-fork type, \emph{\emph{i.e.}}, one--sided.  We proceed to evaluate $K_2$ which determines branch direction hence the stability of $(u_k(s,x),v_k(s,x),\chi_k(s))$ as we shall see in the coming analysis.

Equating the $s^3$-terms in (\ref{31}), we collect
\begin{equation}\label{329}
\left\{
\begin{array}{ll}
\begin{split}
D_1\psi_2^{''}+\chi_k\bar u\phi(\bar v)\varphi_2^{''}-b_1\bar u\psi_2-c_1\bar u\varphi_2-K_2\bar u\phi(\bar v)(\frac{k\pi}{L})^2\cos\frac{k\pi x}{L}\\
=\chi_k A_0-P_5\psi_1\cos\frac{k\pi x}{L}-P_6\varphi_1\cos\frac{k\pi x}{L}-P_3\cos^3\frac{k\pi x}{L},
\end{split}\\
\begin{split}
D_2\varphi_2^{''}-b_2\bar v\psi_2-c_2\bar v\varphi_2=-P_7\psi_1\cos\frac{k\pi x}{L}-P_8\varphi_1\cos\frac{k\pi x}{L}-P_4\cos^3\frac{k\pi x}{L},
\end{split}    \\
\psi'_2 (0)=\varphi_2'(0)=0, \psi'_2 (L)=\varphi_2'(L)=0,
\end{array}
\right.
\end{equation}
where
\[\begin{split}
A_0=&-(\phi(\bar v)Q_k+\bar u\phi^{'}(\bar v))\varphi_1^{''}\cos\frac{k\pi x}{L}+(\frac{k\pi}{L})^2(\phi(\bar v)\psi_1+\bar u\phi^{'}(\bar v)\varphi_1)\cos\frac{k\pi x}{L}\\
&+(\frac{k\pi}{L})\big(\phi(\bar v)\psi_1^{'}+(\phi(\bar v)Q_k+2\bar u\phi^{'}(\bar v))\varphi_1^{'}\big)\sin\frac{k\pi x}{L}\\
&-(\frac{k\pi}{L})^2\big(2\phi^{'}(\bar v)Q_k+\bar u\phi^{''}(\bar v)\big)\cos\frac{k\pi x}{L}\\
&+3(\frac{k\pi}{L})^2\big(\phi^{'}(\bar v)Q_k+\frac{1}{2}\bar u\phi^{''}(\bar v)\big)\cos^3\frac{k\pi x}{L}.
\end{split}\]
Testing the first equation in (\ref{329}) by $\cos\frac{k\pi x}{L}$ over $(0,L)$, we conclude from straightforward calculations that
\begin{equation}\label{330}
\begin{split}
\frac{\bar u\phi(\bar v)(k\pi)^2}{L}K_2=&-2\Big(D_1(\frac{k\pi}{L})^2+b_1\bar u\Big)\int_0^L\psi_2\cos\frac{k\pi x}{L}dx  \\
&-2\Big(\chi_k\bar u\phi(\bar v)(\frac{k\pi}{L})^2 +c_1\bar u\Big)\int_0^L\varphi_2\cos\frac{k\pi x}{L}dx   \\
&+\Big(P_5-\chi_k\phi(\bar v)(\frac{k\pi}{L})^2\Big)\int_0^L\psi_1dx  \\
&+\Big(P_6-\chi_k\bar u\phi^{'}(\bar v)(\frac{k\pi}{L})^2\Big)\int_0^L\varphi_1dx  \\
&+\Big(P_5+\chi_k\phi(\bar v)(\frac{k\pi}{L})^2\Big)\int_0^L\psi_1\cos\frac{2k\pi x}{L}dx \\
&+\Big(P_6-\chi_k\big(2\phi(\bar v)Q_k+\bar u\phi^{'}(\bar v)\big)(\frac{k\pi}{L})^2\Big)\int_0^L\varphi_1\cos\frac{2k\pi x}{L}dx  \\
&+\frac{3LP_3}{4}-\Big(\phi^{'}(\bar v)Q_k+\frac{1}{2}\bar u\phi^{''}(\bar v)\Big)\frac{k^2\pi^2\chi_k}{4L},
\end{split}
\end{equation}
where $P_3$, $P_5$ and $P_6$ are given in (\ref{324}), and $\chi_k$ is defined by (\ref{314}).

On the other hand, testing the second equation of (\ref{329}) by $\cos\frac{k\pi x}{L}$ over $(0,L)$ gives rise to
\begin{equation}\label{331}
\begin{split}
&b_2\bar v\int_0^L\psi_2\cos\frac{k\pi x}{L}dx +\Big(D_2(\frac{k\pi}{L})^2+c_2\bar v\Big)\int_0^L \varphi_2\cos\frac{k\pi x}{L}dx \\
=&\frac{P_7}{2}\Big(\int_0^L\psi_1dx+\int_0^L\psi_1\cos\frac{k\pi x}{L}dx\Big)+\frac{P_8}{2}\Big(\int_0^L\varphi_1dx+\int_0^L\varphi_1\cos\frac{k\pi x}{L}dx\Big)-\frac{3P_4L}{8},
\end{split}
\end{equation}
Since $(\psi_2,\varphi_2)\in \mathcal{Z}$, we can solve (\ref{331}) to find that
\begin{equation}\label{332}
\begin{split}
\int_0^L \psi_2\cos\frac{k\pi x}{L}dx=&\frac{(\int_0^L\psi_1\cos\frac{2k\pi x}{L}dx+\int_0^L\psi_1dx\big)P_7}{2b_2\bar v(Q_k^2+1)}  \\
&+\frac{(\int_0^L\varphi_1\cos\frac{2k\pi x}{L}dx+\int_0^L\varphi_1dx\big)P_8}{2b_2\bar v(Q_k^2+1)}  \\
&+\frac{3P_4L}{8b_2\bar v(Q_k^2+1)},
\end{split}
\end{equation}
\begin{equation}\label{333}
\begin{split}
\int_0^L \varphi_2\cos\frac{k\pi x}{L}dx=&-\frac{Q_k(\int_0^L\psi_1\cos\frac{2k\pi x}{L}dx+\int_0^L\psi_1dx\big)P_7}{2b_2\bar v(Q_k^2+1)}  \\
&-\frac{Q_k(\int_0^L\varphi_1\cos\frac{2k\pi x}{L}dx+\int_0^L\varphi_1dx\big)P_8}{2b_2\bar v(Q_k^2+1)}  \\
&-\frac{3Q_kP_4L}{8b_2\bar v(Q_k^2+1)}.
\end{split}
\end{equation}

In order to find $K_2$ in (\ref{330}), we see from (\ref{332}) and (\ref{333}) that it is necessary to evaluate the following integrals
\[\int_0^L \psi_1dx,~ \int_0^L \varphi_1dx,~\int_0^L \psi_1\cos\frac{2k\pi x}{L}dx,~\text{and} ~\int_0^L \varphi_1\cos\frac{2k\pi x}{L}dx.\]
Integrating both equations in (\ref{325}) over $(0,L)$, we obtain from straightforward calculations by taking the fact $K_1=0$ that
\begin{equation}\label{334}
\int_0^L\psi_1dx=\frac{L(c_2\bar vP_1-c_1\bar uP_2)}{2(b_1c_2-b_2c_1)\bar u\bar v}, \int_0^L\varphi_1dx=\frac{L(b_1\bar uP_2-b_2\bar vP_1)}{2(b_1c_2-b_2c_1)\bar u\bar v}
\end{equation}
On the other hand, we multiply both two equations in (\ref{325}) by $\cos\frac{2k\pi x}{L}$ and integrate them over $(0,L)$ by parts. Then again thanks to $K_1=0$, we have from straightforward calculations that
\begin{equation}\label{335}
\int_0^L\psi_1\cos\frac{2k\pi x}{L}dx=\frac{\vert\mathcal{A}_1\vert}{\vert\mathcal{A}\vert},~ \int_0^L\psi_1\cos\frac{2k\pi x}{L}dx=\frac{\vert\mathcal{A}_2\vert}{\vert\mathcal{A}\vert}.
\end{equation}
where
\[\vert\mathcal{A}\vert=12D_1D_2(\frac{k\pi}{L})^4-3(b_1c_2-b_2c_1)\bar u\bar v,\]
\[\begin{split}
\vert\mathcal{A}_1\vert=&-2\chi_kD_2L(\frac{k\pi}{L})^4\big(\phi(\bar v)Q_k+\bar u\phi^{'}(\bar v)\big)+\frac{L}{4}(c_2\bar vP_1-c_1\bar uP_2)\\
&+\frac{k^2\pi^2}{L}\Big(D_2P_1-\chi_k\bar u\phi(\bar v)P_2-\frac{1}{2}\chi_kc_2\bar v\big(\phi(\bar v)Q_k+\bar u\phi^{'}(\bar v)\big)\Big),
\end{split}\]
and \[\vert\mathcal{A}_2\vert=\frac{k^2\pi^2}{L}\Big(D_1P_2+\chi_kb_2\bar v\big(\phi(\bar v)Q_k+\bar u\phi^{'}(\bar v)\big)\Big) +\frac{L}{4}(b_1\bar uP_2-b_2\bar vP_1).\]
Finally, we are able to evaluate $K_2$ in terms of system parameters, thanks to (\ref{330}), (\ref{334}) and (\ref{335}).  The rest calculations are straightforward but tedious and we skip them here.  We present the stability of the bifurcating solutions in the following Theorem.
\begin{theorem}\label{theorem33}
Assume the same conditions as in Theorem \ref{theorem32}.  Let $K_2$ be given in (\ref{330}).  Suppose that $\chi_{k_0}=\min_{k\in\mathbb N^+}\chi_k$ of (\ref{311}).  Then for all positive integers $k\neq k_0$ and small $\delta>0$, $(u_k(s,x),v_k(s,x))$ is unstable for $s\in(-\delta,\delta)$; moreover, $(u_{k_0}(s,x),v_{k_0}(s,x))$ is stable for $s\in(-\delta,\delta)$ if $K_2>0$ and it is unstable for $s\in(-\delta,\delta)$ if $K_2<0$.
\end{theorem}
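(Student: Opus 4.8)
The plan is to determine the stability of the bifurcating branch by studying the eigenvalue of the linearized operator that crosses the imaginary axis as $\chi$ passes through $\chi_k$, and then to use the standard exchange-of-stability principle from Crandall--Rabinowitz \cite{CR} (specifically their Theorem 1.16). The key observation is that at $\chi=\chi_k$, the operator $D_{(u,v)}\mathcal F(\bar u,\bar v,\chi_k)$ has a simple zero eigenvalue with eigenfunction $(\bar u_k,\bar v_k)=(Q_k,1)\cos\frac{k\pi x}{L}$. Because the branch is of pitchfork type (we have shown $K_1=0$ above), the perturbed eigenvalue $\mu(s)$ along the branch satisfies $\mu(s)=s^2 \mu_2+o(s^2)$ for some constant $\mu_2$, and the sign of $\mu_2$ is tied to the sign of $K_2$ via the transversality quantity computed in Theorem \ref{theorem32}. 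The first task is therefore to relate $\mathrm{sgn}\,\mu_2$ to $\mathrm{sgn}\,K_2$; the exchange-of-stability formula from \cite{CR} gives $\mu(s)\sim -s\,\chi_k'(s)\,\zeta$, where $\zeta>0$ is the transversality constant established in (\ref{319}), and since $\chi_k'(s)=sK_2+o(s)$ this yields $\mu(s)\sim -s^2 K_2 \zeta$. Hence $(u_{k_0}(s,x),v_{k_0}(s,x))$ is stable precisely when $K_2>0$.

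First I would verify that for $k=k_0$, where $\chi_{k_0}=\min_{k\in\mathbb N^+}\chi_k$, the equilibrium $(\bar u,\bar v)$ has all the other wavemodes stable at $\chi=\chi_{k_0}$: by Proposition \ref{proposition1} the determinant $D_j$ of $\mathcal H_j$ stays positive for every $j\neq k_0$ when $\chi=\chi_{k_0}<\chi_j$, and $T_j>0$ always, so $\mathcal H_j$ has two eigenvalues with negative real part for all $j\neq k_0$. Thus at the bifurcation point only the $k_0$-mode is critical, and the zero eigenvalue is simple. This reduces the stability question on the branch $\Gamma_{k_0}$ to tracking the single real eigenvalue $\mu(s)$ through the one-dimensional kernel, which is exactly the setting of \cite{CR}.

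Next I would treat the branches $\Gamma_k$ with $k\neq k_0$. The point here is that at $\chi=\chi_k>\chi_{k_0}$, the equilibrium $(\bar u,\bar v)$ has already lost stability through the $k_0$-mode: the matrix $\mathcal H_{k_0}$ at $\chi=\chi_k$ has $D_{k_0}<0$ (since $\chi_k>\chi_{k_0}$ forces the determinant of the $k_0$-block negative), hence a positive eigenvalue. This unstable direction is transverse to the kernel $\mathrm{span}\{(\bar u_k,\bar v_k)\}$ along which bifurcation occurs, and it persists under the small perturbation of order $s$. Consequently $(u_k(s,x),v_k(s,x))$ inherits at least one eigenvalue with positive real part for all small $s$, so every branch $\Gamma_k$ with $k\neq k_0$ is unstable regardless of the sign of its own $K_2$. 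This is the mechanism by which $\chi_{k_0}$ selects the only possibly-stable wavemode.

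The main obstacle will be making the perturbation argument for the $k\neq k_0$ branches fully rigorous, since it requires controlling the full spectrum of the linearization about the nonconstant state $(u_k(s,x),v_k(s,x))$, not merely the critical eigenvalue that \cite{CR} tracks; one must argue that the eigenvalues born from the noncritical wavemodes vary continuously in $s$ and that the one sitting in the right half-plane at $s=0$ cannot cross back for small $|s|$. The cleanest route is a continuity/perturbation argument for the finite-dimensional blocks $\mathcal H_j$ viewed as an approximation to the spectrum of the strongly coupled operator, combined with the fact that the essential spectrum stays in the left half-plane. For the $k_0$-branch, the delicate point is instead the sign bookkeeping: carefully verifying that the transversality constant $\zeta$ in (\ref{319}) is strictly positive so that $\mathrm{sgn}\,\mu_2=-\mathrm{sgn}\,K_2$ holds with the stated orientation, which follows from testing against the adjoint eigenfunction.
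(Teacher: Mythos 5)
Your proposal follows essentially the same route as the paper's proof: for $k\neq k_0$, instability comes from the positive eigenvalue that the linearization at $(\bar u,\bar v,\chi_k)$ already possesses in the $k_0$-wavemode block (its determinant is negative since $\chi_k>\chi_{k_0}$), and this eigenvalue persists for small $s$ by standard eigenvalue perturbation (the paper cites \cite{Ka}); for $k=k_0$, both you and the paper apply the exchange-of-stability principle together with $K_1=0$ --- though the relevant result, Theorem 1.16, is in \cite{CR2}, not \cite{CR}. Your explicit check that every block $\mathcal H_j$, $j\neq k_0$, is stable at $\chi=\chi_{k_0}$, so that the zero eigenvalue is the only critical one, is a point the paper leaves implicit but is indeed needed for the stability conclusion.

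The one substantive difference is the sign bookkeeping, and here your version is the internally consistent one. Your constant $\zeta$ is $\dot\mu(\chi_{k_0})$, the derivative at $\chi_{k_0}$ of the critical eigenvalue of $D_{(u,v)}\mathcal F(\bar u,\bar v,\chi)$; it is not literally produced by (\ref{319}) (transversality only gives $\dot\mu(\chi_{k_0})\neq0$), but your claim $\zeta>0$ is correct and easy to justify: by Proposition \ref{proposition1} the constant state is linearly stable for $\chi<\chi_{k_0}$ and unstable for $\chi>\chi_{k_0}$, so the critical eigenvalue crosses zero from below; explicitly, restricting to the $k_0$-mode gives $\dot\mu(\chi_{k_0})=b_2\bar u\bar v\phi(\bar v)(\tfrac{k_0\pi}{L})^2\big/\big((D_1+D_2)(\tfrac{k_0\pi}{L})^2+b_1\bar u+c_2\bar v\big)>0$. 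Then, since $\chi_{k_0}'(s)=2sK_2+o(s)$ (you dropped the factor $2$; harmless for signs), $\lambda(s)\sim-s\chi_{k_0}'(s)\dot\mu(\chi_{k_0})=-2K_2\dot\mu(\chi_{k_0})s^2+o(s^2)$, so $\text{sgn}\,\lambda(s)=-\text{sgn}(K_2)$ and the branch is stable precisely when $K_2>0$, as the theorem asserts. By contrast, the paper's own computation arrives at $\dot\mu(\chi_{k_0})<0$ because of a sign slip in (\ref{340}): differentiating $\chi\bar u\phi(\bar v)v''$ in $\chi$ produces $+\bar u\phi(\bar v)\big(\cos\frac{k_0\pi x}{L}\big)''$, not the minus sign written there; as a result its concluding line $\text{sgn}(\lambda(s))=\text{sgn}(K_2)$ contradicts the theorem statement, while your orientation repairs the inconsistency. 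Finally, your worry about controlling essential spectrum for the $k\neq k_0$ branches is moot: on a bounded interval the linearized operator has compact resolvent, so the spectrum is discrete and the persistence argument is exactly the finite-dimensional perturbation the paper invokes.
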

\begin{remark}
Theorem \ref{theorem33} provides a rigourous selection mechanism for stable nonconstant positive solutions to system (\ref{31}).  If $(u_k(s,x),v_k(s,x))$ is stable, then $k$ must be the integer that minimizes the bifurcation value $\chi_k$ over $\mathbb N^+$, that being said, if a bifurcation branch is stable, then it must be the first branch counting from the left to the right.
\end{remark}
\begin{proof}
For each $k\in \mathbb N^+$, we linearize (\ref{31}) around $(u_k(s,x),v_k(s,x),\chi_k(s))$ and obtain the following eigenvalue problem
\begin{equation}\label{336}
D_{(u,v)}\mathcal{F}(u_k(s,x),v_k(s,x),\chi_k(s))(u,v)=\lambda(s)(u,v),~(u,v)\in \mathcal{X} \times \mathcal{X}.
\end{equation}
Then solution $(u_k(s,x),v_k(s,x),\chi_k(s))$ will be asymptotically stable if and only if the real part of any eigenvalue $\lambda(s)$ to (\ref{336}) is negative for $s\in (-\delta,\delta)$.

Sending $s$ to 0, we know that $\lambda=\lambda(0)=0$ is a simple eigenvalue of $D_{(u,v)}\mathcal{F}(\bar{u},\bar{v},\chi_k)(u,v)=\lambda(u,v)$ or equivalently, the following eigenvalue problem
\begin{equation}\label{337}
\left\{
\begin{array}{ll}
D_1 u''+\chi_k \bar{u}\phi(\bar{v})v''-b_1\bar{u}u-c_1\bar{u}v=\lambda u,& x\in(0,L),  \\
D_2 v''-b_2\bar{v}u-c_2\bar{v}v=\lambda v,& x\in(0,L),  \\
u'(x)=v'(x)=0,&x=0,L;
\end{array}
\right.
\end{equation}
moreover, it has a one--dimensional eigen-space $\mathcal{N}\big(D_{(u,v)}\mathcal{F}(\bar{u},\bar{v},\chi_k)\big)=\{(Q_k,1)\cos \frac{k\pi x}{L}\}$ and one can also prove that $(Q_k,1)\cos \frac{k\pi x}{L} \not\in\mathcal{R}\big(D_{(u,v)}\mathcal{F}(\bar{u},\bar{v},\chi_k)\big)$ following the same analysis that leads to (\ref{319}).

Multiplying (\ref{337}) by $\cos\frac{k\pi x}{L}$ and integrating it over $(0,L)$ by parts give rise to
\[\begin{pmatrix}\!\!
-D_1 \!\big(\frac{k \pi}{L}\big)^2\!\!-\!b_1\bar{u}-\lambda & -\chi_k \big(\frac{k \pi}{L}\big)^2 \!\bar{u}\phi(\bar{v})\!-\!c_1\bar{u}   \\
-b_2\bar{v} & -D_2\big(\frac{k \pi}{L}\big)^2\!\!-\!c_2\bar{v}-\lambda
\!\!\end{pmatrix}\!
\!\!\begin{pmatrix}\!
\int_0^L \!\!{u}\cos\frac{k\pi x}{L}dx \\
\int_0^L\!\! {v}\cos\frac{k\pi x}{L}dx
\end{pmatrix}\!\!=\!\!\begin{pmatrix}
0\\
0
\end{pmatrix},
\]
where the eigenvalue $\lambda$ satisfies
\[\bar p_k(\lambda)=\lambda^2+\bar T_k\lambda+\bar D_k=0,\]
with
\[\bar T_k=\big(D_1+D_2\big)\Big(\frac{k\pi}{L}\Big)^2+b_1\bar u+c_2\bar v>0,\]
and
\[\bar D_k=\Big(D_1\big(\frac{k\pi}{L}\big)^2+b_1\bar{u}\Big)\Big(D_2\big(\frac{k\pi}{L}\big)^2 +c_2\bar v\Big)-\Big(\chi_k\bar{u}\phi(\bar{v})\big(\frac{k\pi}{L}\big)^2+c_1\bar u\Big)b_2 \bar{v}.\]
If $\chi_k\neq\chi_{k_0}=\min_{k\in \mathbb N^+} \chi_k$, $\bar D_k<0$ thanks to (\ref{314}), therefore $\bar p_k(\lambda)$ hence (\ref{337}) always have a positive root $\lambda(0)$ for all $k\neq k_0$.  From the standard eigenvalue perturbation theory in \cite{Ka}, (\ref{336}) always has a positive root $\lambda(s)$ for small $s$ if $k\neq k_0$.  This finishes the proof of the instability part.

To study the stability of $(u_{k_0}(s,x),v_{k_0}(s,x),\chi_{k_0}(s))$, we first note that $\bar p_{k_0}(\lambda)$ or (\ref{337}) with $k=k_0$ has two eigenvalues, one being negative and the other being zero.  Hence we need to investigate the asymptotic behavior of the zero eigenvalue as $s\approx,\neq0$.  According to Corollary 1.13 in \cite{CR2}, there exists an interval $I$ with $\chi_{k_0}\in I$ and $C^1$-smooth functions $(\chi,s):I\times (-\delta,\delta) \rightarrow (\mu(\chi),\lambda(s))$ such that $(\mu(\chi_{k_0}), \lambda(0))=(0,0)$; moreover, $\lambda(s)$ is the only eigenvalue in any fixed neighbourhood of the complex plane origin and $\mu(\chi)$ is the only eigenvalue of the following eigenvalue problem around $\chi_{k_0}$
\begin{equation}\label{338}
D_{(u,v)}\mathcal{F}(\bar{u},\bar{v},\chi)(u,v)=\mu(u,v),~(u,v)\in \mathcal{X} \times \mathcal{X},
\end{equation}
or equivalently
\begin{equation}\label{339}
\left\{
\begin{array}{ll}
D_1 u''+\chi \bar{u}\phi(\bar{v})v''-b_1\bar{u}u-c_1\bar{u}v=\mu u,& x\in(0,L),  \\
D_2 v''-b_2\bar{v}u-c_2\bar{v}v=\mu v,& x\in(0,L),  \\
u'(x)=v'(x)=0,&x=0,L;
\end{array}
\right.
\end{equation}
furthermore, the eigenfunction of (\ref{338}) can be represented by $\big(u(\chi,x),v(\chi,x)\big)$, which depends on $\chi$ smoothly and is uniquely determined by $\big(u(\chi_{k_0},x),v(\chi_{k_0},x)\big)=\big( Q_{k_0} \cos \frac{k_0\pi x}{L},\cos \frac{k_0\pi x}{L} \big)$ and $\big(u(\chi,x),v(\chi,x)\big)$ $-\big( Q_{k_0}\cos \frac{k_0\pi x}{L},\cos \frac{k_0\pi x}{L} \big) \in \mathcal{Z}$, with $Q_{k_0}$ and $\mathcal{Z}$ being given by (\ref{315}) and (\ref{318}) respectively.

Differentiating (\ref{339}) with respect to $\chi$ and then taking $\chi=\chi_{k_0}$, we have that
\begin{equation}\label{340}
\left\{\!\!\!
\begin{array}{ll}
D_1\dot{u}''\!\!-\!\bar{u}\phi(\bar{v})\big(\cos\frac{k_0\pi x}{L}\big)''\!\!+\chi_{k_0}\bar{u}\phi(\bar{v})\dot{v}''\!\!-\!b_1\bar{u}\dot{u}\!-\!c_1\bar{u}\dot{v}=\dot{\mu}(\chi_{k_0}) Q_{k_0} \cos\frac{k_0\pi x}{L},  \\
D_2 \dot{v}''\!\!-b_2\bar{v}\dot{u}-c_2\bar{v}\dot{v}=\dot{\mu}(\chi_{k_0}) \cos\frac{k_0\pi x}{L},\\
\dot{u}'(x)=\dot{v}'(x)=0,~x=0,L,
\end{array}
\right.
\end{equation}
where $\dot{}$ in (\ref{340}) denotes the derivative taken with respect to $\chi$ and evaluated at $\chi=\chi_{k_0}$, \emph{i.e.}, $\dot{u}=\frac{\partial u(\chi,x)}{\partial \chi}\big\vert_{\chi=\chi_{k_0}}$, $\dot{v}=\frac{\partial v(\chi,x)}{\partial \chi}\big\vert_{\chi=\chi_{k_0}}$.

Testing (\ref{340}) by $\cos\frac{k \pi x}{L}$ over $(0,L)$ yields
\[\begin{pmatrix}\!\!
-D_1 \!\big(\frac{k \pi}{L}\big)^2\!\!-\!b_1\bar{u} & -\chi_{k_0} \big(\frac{k \pi}{L}\big)^2 \!\bar{u}\phi(\bar{v})\!-\!c_1\bar{u}   \\
-b_2\bar{v} & -D_2\big(\frac{k \pi}{L}\big)^2\!\!-\!c_2\bar{v}
\!\!\end{pmatrix}\!
\!\!\begin{pmatrix}\!
\int_0^L \!\!\dot{u}\cos\frac{k_0\pi x}{L}dx \\
~~\\
\int_0^L\!\! \dot{v}\cos\frac{k_0\pi x}{L}dx
\end{pmatrix}\!\!=\!\!\begin{pmatrix}
\!\Big(\dot{\mu}(\chi_{k_0})Q_{k_0}\!-\!\bar{u}\phi(\bar{v})\big(\frac{k_0\pi }{L} \big)^2\Big)\! \frac{L}{2}\\
~~\\
\dot{\mu}(\chi_{k_0})\frac{L}{2}
\!\!\!\!\end{pmatrix}.
\]
The coefficient matrix is singular thanks to (\ref{314}), then we must have that $\frac{D_1 (\frac{k_0 \pi}{L})^2+b_1\bar{u}}{b_2\bar{v}}=\frac{\dot{\mu}(\chi_{k_0})Q_{k_0}-\bar{u}\phi(\bar{v}) (\frac{k_0\pi }{L})^2}{\dot{\mu}(\chi_{k_0})}$ which, in light of (\ref{315}), implies that
\[\dot{\mu}(\chi_{k_0})=-\frac{b_2\bar{u}\phi(\bar{v})\bar{v}\big(\frac{k_0\pi }{L} \big)^2}{(D_1+D_2)\big(\frac{k_0\pi }{L} \big)^2+b_1\bar{u}+c_2\bar{v}}<0.\]
According to Theorem 1.16 in \cite{CR2}, the functions $\lambda(s)$ and $-s\chi'_{k_0}(s)\dot{\mu}(\chi_{k_0})$ have the same zeros and signs near $s=0$, and
\[\lim_{s\rightarrow 0}\frac{-s\chi'_{k_0}(s)\dot{\mu}(\chi_{k_0})}{\lambda(s)}=1, \text{for} \lambda(s) \neq0,\]
then we conclude that $\text{sgn} (\lambda(0))=\text{sgn}(K_2)$ in light of $K_1=0$.  Following the standard perturbation theory, one can show that $\text{sgn} (\lambda(s))=\text{sgn}(K_2)$ for $s\in(-\delta,\delta)$, $\delta$ being small.  This finishes the proof of Theorem \ref{theorem33}.
\end{proof}

According to Theorem \ref{theorem33}, if $k_0$ is the positive integer that minimizes $\chi_k$ over $\mathbb N^+$, the bifurcation branch $\Gamma_{k_0}(s)$ around $(\bar u,\bar v,\chi_{k_0})$ is stable if it turns to the left and it is unstable if it turns to the right.  However, for all $k\neq k_0$, $\Gamma_k(s)$ is always unstable for $s$ being small.  It is unknown about the global behavior of the continuum of $\Gamma_k(s)$.  We discuss this in details in Section \ref{section5}.

Proposition \ref{proposition1} indicates that $(\bar u,\bar v)$ loses its stability as $\chi$ surpasses $\chi_{k_0}$.  Theorem \ref{theorem33} shows that the stability is lost to the nonconstant steady state $(u_{k_0}(s,x),v_{k_0}(s,x))$ in the form $(Q_{k_0},1)\cos \frac{k_0\pi x}{L}$ and we call this the stable \emph{wavemode} of (\ref{1}).  If the initial value $(u_0,v_0)$ being a small perturbation from $(\bar u,\bar v)$, then spatially inhomogeneous patterns can emerge through this mode, at least when $\chi$ is around $\chi_{k_0}$, therefore stable patterns with interesting structures, such as interior spikes, transition layers, etc. can develop as time evolves.  Rigorous analysis of the boundary spike solutions will be analyzed in the coming section.

If the interval length $L$ is small, we see that
\[\chi_k=\frac{\big( D_1(\frac{k\pi}{L})^2+b_1\bar{u} \big)\big(D_2(\frac{k\pi}{L})^2+c_2\bar{v} \big)-b_2c_1\bar{u}\bar{v} }{b_2(\frac{k\pi}{L})^2\bar{u}\phi(\bar{v})\bar{v}}\approx \frac{D_1D_2(\frac{k\pi}{L})^2}{b_2\phi(\bar v)\bar u\bar v},\]
therefore $\chi_1=\chi_{k_0}=\min_{k\in \mathbb N^+} \chi_k$ and Theorem \ref{theorem33} shows that the only stable pattern is $(u_1(s,x),v_1(s,x))$ which is spatially monotone.  It is easy to see that $k_0$ increases if $L$ increases.  This indicates that small domain only supports monotone stable solutions, while large domain supports non-monotone stable solutions.  Indeed, one can construct non-monotone solutions to (\ref{31}) by reflecting and periodically extending the monotone ones at the boundary points $0,\pm L,\pm 2L,...$


\section{Boundary spikes with limiting diffusion rates}\label{section4}
This section is devoted to investigate positive solutions to (\ref{31}) with large amplitude compared to the small amplitude bifurcating solutions.  In particular, we study the solution profiles as $D_1$ and $\chi$ approach infinity with $\chi/D_1\in (0,\infty)$ being fixed.  Since we will show that small $D_2$ gives rise to boundary spikes for system (\ref{31}), we put $\epsilon=\sqrt D_2$, and we put $\phi(v)\equiv1$ in (\ref{31}) without loss of our generality.  It is the goal of this section to investigate positive solutions with large amplitude to the following system
\begin{equation}\label{41}
\left\{
\begin{array}{ll}
(D_1 u'+\chi u v')'+\big(-1+\frac{1}{a_1+b_1u+c_1v}\big)u=0,&x \in (0,L),\\
\epsilon^2  v''+\big(-1+\frac{1}{a_2+b_2u+c_2v}\big)v=0,&x \in (0,L),\\
u'(x)=v'(x)=0,&x=0,L,
\end{array}
\right.
\end{equation}
Our main results are the followings.
\begin{theorem}\label{theorem41}
Let $r=\frac{\chi}{D_1}\in(0,\infty)$ and $a_2\in(0,1)$ be fixed.  For any $\epsilon>0$ being small, we can find $\bar D>0$ large such that if $D_1>\bar D$, there always exists a nonconstant positive solution $(u,v)$ to (\ref{41}).  Moreover, as $D_1\rightarrow \infty$, $(u(x),v(x))$ converges to $(\lambda_{\epsilon} e^{-rv_{\epsilon}(x)}, v_{\epsilon}(x))$ uniformly in $[0,L]$, where $\lambda_{\epsilon}$ is a positive constant and $\lambda_{\epsilon} \rightarrow 0$ as ${\epsilon} \rightarrow 0$; $v_{\epsilon}(x)$ is a positive function of $x$ and $v_{\epsilon}(x)\rightarrow \frac{1-a_2}{c_2}$ compact uniformly on $(0,L]$ and $v_{\epsilon}(0)\rightarrow \frac{1-a_2}{2c_2}$.
\end{theorem}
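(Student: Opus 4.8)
The plan is to pass to the \emph{shadow system} that arises in the limit $D_1\to\infty$ with $r=\chi/D_1$ held fixed, solve it for fixed small $\epsilon$, and then justify both limiting regimes in turn. First I would rewrite the $u$-equation in terms of the flux $J:=u'+ruv'=e^{-rv}(ue^{rv})'$, so that the first equation of (\ref{41}) reads $D_1 J'+\big(-1+\frac{1}{a_1+b_1u+c_1v}\big)u=0$. Integrating once and invoking the no-flux condition gives $J(0)=0$ and
\begin{equation*}
(ue^{rv})'=e^{rv}J=-\frac{1}{D_1}\,e^{rv}\int_0^x\Big(-1+\frac{1}{a_1+b_1u+c_1v}\Big)u\,dx'.
\end{equation*}
Sending $D_1\to\infty$ forces $(ue^{rv})'\to0$, hence $u=\lambda e^{-rv}$ for a positive constant $\lambda$; integrating the full $u$-equation over $(0,L)$ and using $J(0)=J(L)=0$ produces the scalar constraint $\int_0^L\big(-1+\frac{1}{a_1+b_1\lambda e^{-rv}+c_1v}\big)\lambda e^{-rv}\,dx=0$. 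Substituting $u=\lambda e^{-rv}$ into the second equation yields the nonlocal shadow system for the pair $(\lambda,v)$.

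Next I would solve the shadow system for fixed small $\epsilon$. For each admissible $\lambda$ in a bounded positive range, the equation $\epsilon^2 v''+\big(-1+\frac{1}{a_2+b_2\lambda e^{-rv}+c_2v}\big)v=0$ with $v'(0)=v'(L)=0$ is a standard singularly perturbed scalar Neumann problem, and I would produce the branch $v=v(\cdot;\lambda,\epsilon)$ lying near the constant $\frac{1-a_2}{c_2}$ using sub- and supersolutions together with the monotone structure of the reaction in $v$, which also gives smooth dependence on $\lambda$. Feeding this back into the constraint defines a scalar map $\Phi_\epsilon(\lambda)$; in the weak competition case $b_2c_1<b_1c_2$ one checks that the bulk reaction for $u$ is positive for $\lambda$ small while the term $\frac{1}{a_1+b_1\lambda e^{-rv}+c_1v}$ vanishes for $\lambda$ large, so $\Phi_\epsilon$ changes sign and the intermediate value theorem yields a root $\lambda_\epsilon>0$, hence a nonconstant shadow solution $(\lambda_\epsilon,v_\epsilon)$. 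To return to the full system I would treat $1/D_1$ as a small parameter and apply the implicit function theorem (Lyapunov--Schmidt reduction) at the profile $(\lambda_\epsilon e^{-rv_\epsilon},v_\epsilon)$: provided the shadow linearization is invertible (nondegeneracy of $v_\epsilon$ together with $\Phi_\epsilon'(\lambda_\epsilon)\neq0$), every large $D_1$ gives a genuine positive solution of (\ref{41}) converging uniformly on $[0,L]$ to $(\lambda_\epsilon e^{-rv_\epsilon},v_\epsilon)$; alternatively one derives $D_1$-uniform $C^1$ bounds and extracts a convergent subsequence solving the shadow system.

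Finally, I would study $(\lambda_\epsilon,v_\epsilon)$ as $\epsilon\to0$ by matched asymptotics. The outer problem $\big(-1+\frac{1}{a_2+b_2u+c_2v}\big)v=0$, once the constraint forces $\lambda_\epsilon\to0$ (so $u\to0$ away from the boundary), selects the positive outer state $v\equiv\frac{1-a_2}{c_2}$, giving the compact-uniform convergence on $(0,L]$. Near $x=0$ I would rescale $x=\epsilon y$ and pass to the leading-order inner problem on the half-line with $v'(0)=0$ and matching $v(\infty)=\frac{1-a_2}{c_2}$; a first-integral (energy) identity for this inner equation, combined with the precise $\epsilon$-scaling of $\lambda_\epsilon$ dictated by the constraint, pins the boundary value at $v_\epsilon(0)\to\frac{1-a_2}{2c_2}$, exactly half the bulk value, which is the signature of a boundary layer for $v$ (equivalently a boundary concentration for $u=\lambda_\epsilon e^{-rv_\epsilon}$).

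The hard part will be the nonlocal coupling through $\lambda$. Both the solvability of $\Phi_\epsilon(\lambda)=0$ and the nondegeneracy $\Phi_\epsilon'(\lambda_\epsilon)\neq0$ required for the implicit function argument are nonstandard, since $\lambda$ enters the $v$-profile globally; I would control them by exploiting the monotone dependence of $v(\cdot;\lambda,\epsilon)$ on $\lambda$. Equally delicate is the $\epsilon\to0$ step: one must simultaneously track the vanishing rate of $\lambda_\epsilon$ and the width and amplitude of the boundary layer, because it is precisely the balance between the $O(1)$ bulk contribution and the boundary-layer contribution to the constraint that fixes the limiting boundary value $\frac{1-a_2}{2c_2}$ rather than some generic number in $\big(0,\frac{1-a_2}{c_2}\big)$.
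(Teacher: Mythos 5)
Your overall architecture is the same as the paper's: derive the shadow system (\ref{44}) by integrating the $u$-equation (this is exactly Proposition \ref{proposition2}), solve it for small $\epsilon$ by first treating the scalar singularly perturbed Neumann problem (\ref{46}) at fixed $\lambda$ and then adjusting $\lambda$ through the constraint (\ref{47}), and finally return to (\ref{41}) via an implicit function theorem in $s=1/D_1$, which is precisely the paper's change of variables $\phi=ue^{rv}-\tau$ and the invertibility analysis of (\ref{441})--(\ref{442}). However, your execution fails at the two technical cores. The first gap: the scalar profile cannot be produced by sub- and supersolutions ``lying near the constant $\frac{1-a_2}{c_2}$''. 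The solution needed is not near that constant --- it has an $O(1)$ deviation at $x=0$, as your own inner analysis asserts --- and, more fundamentally, any nonconstant steady state of a scalar autonomous Neumann problem $\epsilon^2v''+h(v)=0$ on an interval is an unstable equilibrium of the associated parabolic flow (Chafee; Casten--Holland), whereas monotone iteration between an ordered sub/supersolution pair can only deliver extremal solutions that are stable from one side; the iteration collapses onto the constant equilibria and never onto the spike. The paper instead builds the profile from the Berestycki--Lions ground state $W_0$ of $W_0''+\tilde f(\lambda;W_0)=0$ on $\mathbb{R}$ (Lemma \ref{lemma45}), glues it in by a cut-off, and closes the construction with a contraction mapping whose essential ingredient is the uniform-in-$\epsilon$ invertibility of the linearization $\mathcal L_\epsilon=\epsilon^2\frac{d^2}{dx^2}+\tilde f_w(\lambda;W_{\epsilon,\lambda})$ (Lemma \ref{lemma46}, proved by a rescaling/nondegeneracy contradiction argument). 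Nothing in your proposal replaces this lemma, yet every later step (smooth dependence on $\lambda$, solvability of the constraint, the final IFT) rests on it.

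The second gap concerns the constraint. Your intermediate value argument presupposes a continuous branch $\lambda\mapsto v(\cdot;\lambda,\epsilon)$ (sub/supersolutions give no uniqueness, hence no branch), uses the weak-competition sign condition (the theorem only assumes (\ref{34})), and --- decisively --- does not show $\lambda_\epsilon\to0$ as $\epsilon\to0$: an IVT root could sit at some $O(1)$ value of $\lambda$. That decay is part of the statement being proved, it is what forces $v_\epsilon\to\frac{1-a_2}{c_2}$ in the bulk and fixes the boundary value, and the paper uses it once more to prove that the full-system linearization (\ref{442}) is invertible (there $\lambda_\epsilon\to0$ is what forces $\xi_\epsilon\to0$). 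The paper obtains it at no extra cost by applying the implicit function theorem to $G(\epsilon,\lambda)$ in (\ref{437}) at the corner $(\epsilon,\lambda)=(0,0)$, with nondegeneracy (\ref{439}) coming from $\frac{1-a_2}{c_2}\neq\frac{1-a_1}{c_1}$, i.e.\ from (\ref{34}); differentiability of $G$ in $\lambda$ again requires the uniformly invertible operator $\tilde{\mathcal L}_\epsilon$. Two smaller points: your fallback of extracting a convergent subsequence only yields convergence of solutions already known to exist, not existence for large $D_1$; and your matched-asymptotics/energy-identity determination of $v_\epsilon(0)$ is the right tool but is asserted rather than computed --- in the paper this value comes out of the explicit ground-state profile in Lemma \ref{lemma45} combined with $s_1(\lambda_\epsilon)\to0$, not from a balance between bulk and layer contributions to the constraint.
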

According to Theorem \ref{theorem41}, $u$ has a boundary spike and $v$ has a boundary layer at $x=0$ if $D_1$ and $\chi$ are sufficiently large and $D_2$ is sufficiently small.  The elliptic spiky solutions can be used to model segregation phenomenon through inter--specific competitions.

\subsection{Convergence to shadow system}
Theorem \ref{theorem41} is an immediate consequence of several preliminary results.  We first study (\ref{41}) by passing $D_1$ to infinity.  To this end, we need the following a prior estimates.
\begin{lemma}\label{lemma42}
Let $(u,v)$ be any positive solution to (\ref{41}).  Then there exists a positive constant $C_0$ independent of $D_1$ and $\chi$ such that
\begin{equation}\label{42}
0<\max_{x\in[0,L]} v(x), \Vert v(x) \Vert_{C^2([0,L])}\leq C_0;
\end{equation}
moreover, if $\frac{\chi}{D_1}$ is bounded, there exists $C_1>0$ such that
\begin{equation}\label{43}
\Vert u'(x) \Vert_{L^2}\leq C_1.
\end{equation}
\end{lemma}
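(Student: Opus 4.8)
The plan is to establish the two bounds separately, beginning with the $C^2$-estimate on $v$ since it is essentially independent of the advection structure, and then bootstrapping to control $\|u'\|_{L^2}$. First I would obtain the pointwise bound on $v$. Let $x_0\in[0,L]$ be a point where $v$ attains its maximum. If $x_0$ is an interior point, then $v'(x_0)=0$ and $v''(x_0)\le 0$, so the $v$-equation in (\ref{41}) forces $-1+\frac{1}{a_2+b_2u(x_0)+c_2v(x_0)}\ge 0$, which immediately yields $c_2 v(x_0)\le 1-a_2-b_2u(x_0)\le 1-a_2$ using $u>0$; hence $\max v\le \frac{1-a_2}{c_2}$. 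If the maximum is attained at a boundary point, the Neumann condition $v'=0$ together with Hopf's lemma rules out a strict interior decrease, and the same comparison argument applies. This gives a bound on $\|v\|_{L^\infty}$ independent of $D_1$ and $\chi$. With $\|v\|_{L^\infty}$ controlled, the reaction term $\big(-1+\frac{1}{a_2+b_2u+c_2v}\big)v$ is bounded in $L^\infty$ \emph{uniformly in $u$} (because $u>0$ only makes the denominator larger, so the term lies in a fixed interval), and then $\epsilon^2 v''$ is bounded in $L^\infty$; integrating and using the Neumann boundary data controls $v'$ and then $v''$ in $C^0$, giving (\ref{42}).

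Next I would derive (\ref{43}). The natural route is to test the first equation of (\ref{41}) against $u$ itself. Integrating $\big((D_1u'+\chi u v')'\big)u$ over $(0,L)$ by parts and using $u'(0)=u'(L)=0$ produces
\begin{equation*}
-\int_0^L D_1 (u')^2\,dx -\chi\int_0^L u v' u'\,dx + \int_0^L\Big(-1+\frac{1}{a_1+b_1u+c_1v}\Big)u^2\,dx=0.
\end{equation*}
Dividing by $D_1$ and writing $r=\chi/D_1$, the term $\int_0^L (u')^2\,dx$ is controlled provided I can absorb the cross term $r\int_0^L u v' u'\,dx$ and the reaction term. For the reaction part I would use the $L^1$-type bound on $u$ (an analogue of Lemma \ref{lemma23}, obtained by integrating the $u$-equation directly) together with the already-established $\|v\|_{L^\infty}$ bound, so that $\frac{u}{a_1+b_1u+c_1v}\le \frac{1}{b_1}$ keeps $\int (-1+\cdots)u^2$ under control in terms of $\int u^2$. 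For the cross term I would apply Young's inequality, $r|u v' u'|\le \frac12 (u')^2 + \frac{r^2}{2}(v')^2 u^2$, and then use (\ref{42}) to bound $(v')^2$ by a constant, leaving $\frac{r^2}{2}C\int u^2$, which is again controlled by the $u$-mass estimate. Absorbing the $\frac12\int (u')^2$ into the good left-hand term yields (\ref{43}) with a constant depending on $r$ (hence on the assumption that $\chi/D_1$ is bounded) but independent of $D_1$ itself.

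The main obstacle will be the cross term $\chi\int u v' u'\,dx$: since $\chi$ and $D_1$ are both large, the only way to keep it harmless is to group it as $r=\chi/D_1$ and exploit that $\|v'\|_{L^\infty}$ is bounded uniformly (which is why the $C^2$-estimate on $v$ must come first) and that $\int_0^L u^2\,dx$ is controlled independently of the diffusion rates. Establishing that uniform $L^2$-bound on $u$ is itself the delicate point — I expect to get it by combining the $L^1$-mass bound with an interpolation or with a direct maximum-principle-type argument on $u$, using that the divergence-form operator preserves positivity; making this step uniform in $D_1$ and $\chi$ is where the care is required.
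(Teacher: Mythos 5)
Your proposal is correct and follows essentially the same route as the paper: the maximum principle gives $\max_{x\in[0,L]}v\leq\frac{1-a_2}{c_2}$ and elliptic regularity then yields (\ref{42}); testing the $u$-equation by $u$, absorbing the cross term $\chi\int_0^L uv'u'\,dx$ via Young's inequality written in terms of $r=\chi/D_1$ and $\Vert v'\Vert_{L^\infty}$, and using the mass bound $\int_0^L u\,dx\leq L/b_1$ (obtained by integrating the equation) is exactly the paper's argument for (\ref{43}). The one step you leave open --- passing from the $L^1$ bound on $u$ to control of $\int_0^L u^2\,dx$ --- is closed by precisely the interpolation you name: the paper invokes the Gagliardo--Nirenberg inequality $\Vert u\Vert_{L^2(0,L)}^2\leq\delta\Vert u'\Vert_{L^2(0,L)}^2+C(\delta)\Vert u\Vert_{L^1(0,L)}^2$ and absorbs the $\delta$-term into the left-hand side after choosing $\delta$ small relative to the (bounded) coefficient of $\int_0^L u^2\,dx$; your alternative suggestion of a direct maximum-principle argument on $u$ would not work, since the advection term deprives the $u$-equation of a comparison principle, as the paper itself notes in Section \ref{section2}.
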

\begin{proof}
We have from the Maximum Principles that
\[\max_{x\in[0,L]} v(x)\leq\frac{1-a_2}{c_2};\]
therefore $v$ is bounded in $C^2([0,L])$ thanks to the standard elliptic Schauder estimates.

On the other hand, we integrate the $u$-equation over $(0,L)$ to see that
\[\int_0^L u(x)dx=\int_0^L \frac{u(x)}{a_1+b_1u(x)+c_1v(x)}dx\leq \frac{L}{b_1}.\]
Testing the $u$-equation by $u$ and integrating it over $(0,L)$ give rise to
\[
\begin{split}
D_1\int_0^L (u')^2dx&=-\chi\int_0^L u u'v' dx+\int_0^L f(u,v)udx\\
&\leq \frac{D_1}{2} \int_0^L (u')^2dx+\Big(\frac{\chi^2\Vert v'\Vert_{L^\infty}}{2D_1}+\frac{1}{a_1}-1\Big)\int_0^L u^2dx,
\end{split}\]
therefore $\Vert u'(x) \Vert_{L^2}$ is bounded for finite $\frac{\chi}{D_1}$ thanks to the Gagliardo-Nirenberg inequality: $\Vert u \Vert^2_{L^2(0,L)}\leq \delta \Vert u' \Vert^2_{L^2(0,L)}+C(\delta)\Vert u \Vert^2_{L^1(0,L)}$, where $\delta>0$ is an arbitrary constant and $C(\delta)>0$ only depends on $\delta$ and $L$.
\end{proof}

We now study the asymptotic behaviors of positive solutions $(u,v)$ to (\ref{41}) by passing advection rate $D_1$ and advection rate $\chi$ to infinity.  We assume that $\frac{\chi}{D_1}$ remains bounded in this process, therefore both $u$ and $v$ are bounded as in Lemma \ref{lemma42}.
\begin{proposition}\label{proposition2}
Let $(u_i,v_i)$ be positive solutions of (\ref{41}) with $(D_{1,i},\epsilon_i,\chi_i)=(D_1,\epsilon,\chi)$.  Denote $\frac{\chi_i}{D_{1,i}}=r_i$.  Assume that $\chi_{i} \rightarrow \infty$, $\epsilon_i\rightarrow \epsilon \in(0,\infty)$ and $r_i \rightarrow r\in(0,\infty)$ as $i\rightarrow \infty$, then there exists a nonnegative constant $\lambda_\epsilon$ such that $u_ie^{r_iv_i} \rightarrow \lambda_\epsilon$ uniformly on $[0,L]$ as $i \rightarrow \infty$; moreover, $(u_i,v_i)\rightarrow (\lambda_\epsilon  e^{-r v_\epsilon },v_\epsilon)$ in $C^1([0,L])\times C^1([0,L])$ after passing to a subsequence if necessary, where $v_\epsilon=v_\epsilon(x)$ satisfies the following shadow system
\begin{equation}\label{44}
\left\{
\begin{array}{ll}
\epsilon^2 v''_\epsilon+(-1+\frac{1}{a_2+b_2\lambda_\epsilon e^{-r v_\epsilon }+c_2v_\epsilon})v_\epsilon =0, & x \in(0,L),\\
\int_0^L (-1+\frac{1}{a_1+b_1\lambda_\epsilon e^{-r v_\epsilon }+c_1v_\epsilon})\lambda_\epsilon e^{-rv_\epsilon}dx=0,\\
v_\epsilon'(0)=v_\epsilon'(L)=0.
\end{array}
\right.
\end{equation}
\end{proposition}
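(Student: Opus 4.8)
The plan is to exploit the divergence structure of the $u$--equation through the substitution $w_i=u_ie^{r_iv_i}$, which linearizes the advective flux, and then pass to the limit using the compactness supplied by Lemma \ref{lemma42}. First I would upgrade those estimates to uniform $C^0$--control of $u_i$: the $L^1$--bound $\int_0^L u_i\,dx\le L/b_1$ together with $\Vert u_i'\Vert_{L^2}\le C_1$ and the one--dimensional embedding $H^1(0,L)\hookrightarrow C^{1/2}([0,L])$ give a bound on $\Vert u_i\Vert_{L^\infty}$ and equicontinuity, all independent of $D_{1,i}$ and $\chi_i$. Combined with $0<v_i\le\frac{1-a_2}{c_2}$ and $\Vert v_i\Vert_{C^2}\le C_0$, the reaction term $f(u_i,v_i)=-1+\frac{1}{a_1+b_1u_i+c_1v_i}$ is uniformly bounded in $i$.

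The key observation is that, since $r_i=\chi_i/D_{1,i}$, the flux factors as $D_{1,i}u_i'+\chi_iu_iv_i'=D_{1,i}e^{-r_iv_i}w_i'$ with $w_i=u_ie^{r_iv_i}$, so the first equation of (\ref{41}) reads $(e^{-r_iv_i}w_i')'=-\frac{1}{D_{1,i}}f(u_i,v_i)u_i$. Setting $\Phi_i=e^{-r_iv_i}w_i'$, the Neumann conditions $u_i'(0)=v_i'(0)=0$ force $\Phi_i(0)=0$, and integrating $\Phi_i'=-D_{1,i}^{-1}f(u_i,v_i)u_i$ yields $\Vert\Phi_i\Vert_{L^\infty}\le C/D_{1,i}\to0$. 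Hence $w_i'=e^{r_iv_i}\Phi_i\to0$ uniformly, so $w_i$ has vanishing oscillation; since $w_i=u_ie^{r_iv_i}>0$ is uniformly bounded, passing to a subsequence produces a constant $\lambda_\epsilon\ge0$ with $w_i\to\lambda_\epsilon$ uniformly on $[0,L]$.

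Next I would extract convergent subsequences and identify the limit. The $C^2$--bound on $v_i$ and Arzel\`a--Ascoli give $v_i\to v_\epsilon$ in $C^1$; then $u_i=w_ie^{-r_iv_i}\to\lambda_\epsilon e^{-rv_\epsilon}$ uniformly, and differentiating $u_i=w_ie^{-r_iv_i}$ together with $w_i'\to0$ and $v_i'\to v_\epsilon'$ shows $u_i'\to(\lambda_\epsilon e^{-rv_\epsilon})'$, upgrading to $C^1$--convergence of $u_i$. Feeding these uniform limits back into the second equation of (\ref{41}) shows $\epsilon_i^2v_i''$ converges uniformly, whence $v_i\to v_\epsilon$ in $C^2$ and $v_\epsilon$ satisfies the first line of (\ref{44}). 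Finally, integrating the $u$--equation over $(0,L)$ annihilates the flux by the boundary conditions, leaving $\int_0^L f(u_i,v_i)u_i\,dx=0$, whose uniform limit is exactly the solvability constraint (the second line) of (\ref{44}); the Neumann conditions on $v_\epsilon$ pass to the limit as well.

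The main obstacle is the step establishing that the rescaled flux $\Phi_i$, equivalently $w_i'$, vanishes uniformly. This hinges on recognizing the exponential integrating factor $e^{r_iv_i}$ that converts the advective transport into a pure gradient, and on the Neumann conditions making the flux vanish at the endpoints; it is precisely what pins $u_ie^{r_iv_i}$ down to a single spatial constant $\lambda_\epsilon$ in the limit. A secondary subtlety, already handled above, is securing the $L^\infty$ and $H^1$ bounds on $u_i$ that are uniform in the diverging parameters $D_{1,i}$ and $\chi_i$, which is what permits all the reaction terms to be controlled while $D_{1,i}^{-1}\to0$.
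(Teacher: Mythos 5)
Your proposal is correct and follows essentially the same route as the paper: the integrating-factor substitution $w_i=u_ie^{r_iv_i}$, integration of the $u$--equation from $0$ to $x$ using the Neumann conditions to show the flux is $O(1/D_{1,i})$, hence $w_i'\to 0$ uniformly and $w_i$ tends to a constant $\lambda_\epsilon$, after which the limit passes into the $v$--equation and the integrated $u$--equation to produce the shadow system (\ref{44}). The only difference is that you fill in details the paper leaves implicit (the uniform $L^\infty$ bound on $u_i$ via the one--dimensional $H^1$ embedding, and the explicit derivation of the integral constraint), which is a welcome elaboration rather than a different argument.
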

\begin{proof} Since $v_i$ is bounded in $C^2([0,L])$ uniformly for all $\chi_i$ and $D_{1,i}>0$.  By the compact embedding, $v_i $ converges to some $v_\infty$ in $C^1([0,L])$ as $i\rightarrow \infty$, after passing to a subsequence if necessary.  On the other hand, we integrate the $u$-equation in (\ref{41}) over $(0,x)$ to have
\[u_i'+r_i u_iv'_i= \frac{1}{D_{1,i}} \int_0^x (1-\frac{1}{a_1+b_1u_i+c_1v_i})u_idx.\]
Denoting $w_i=u_i e^{r_iv_i}$, we have that
\begin{equation}\label{45}
\Big \vert e^{-r_iv_i}w_i' \Big \vert\leq \frac{1+\frac{1}{a_1}}{D_{1,i}} \int_0^x u_i dx.
\end{equation}
Sending $i$ to $\infty$ in (\ref{45}), we conclude that $w'_{i} \rightarrow 0$ uniformly on $[0,L]$.  Therefore, $w_{i}=u_ie^{r v_i}$ converges to a nonnegative constant $\lambda_\epsilon$.  Moreover we can use standard elliptic regularity theory to show that $v_\epsilon$ is $C^\infty$-smooth and it satisfies the shadow system (\ref{44}).
\end{proof}
 Proposition \ref{proposition2} implies that when both $D_1$ and $\chi$ are sufficiently large, the steady state $(u(x),v(x))$ can be approximated by the structures of the shadow system (\ref{44}).  We want to remark that the arguments in this Proposition carry over to the case of higher-dimensional bounded domains.  To find boundary spikes of (\ref{1})--(\ref{2}), we study the asymptotic behavior of the shadow system with small diffusion rate $\epsilon$ and then investigate the original system with large diffusion and advection rates.  This approach is due to the idea from \cite{Ke}, applied by \cite{NT,T} in reaction--diffusion systems and developed by \cite{LN,LN2} for reaction--diffusion system with cross--diffusions.

\subsection{Boundary spike layers of the shadow system}
We proceed to construct boundary--spike solutions to (\ref{44}).  Our results state as follows.
\begin{proposition}\label{proposition3}
Assume that $r\in(0,\infty)$ and $a_2\in(0,1)$.  Then there exists a $\epsilon_0 >0$ such that (\ref{44}) has a nonconstant positive solution $(\lambda_\epsilon,v_\epsilon(\lambda_\epsilon,x))$ for all $\epsilon \in(0,\epsilon_0)$; moreover, $\lambda_\epsilon \rightarrow 0$ as $\epsilon\rightarrow 0$ and
\begin{equation}\label{48}
 v_\epsilon(\lambda_\epsilon,x)\rightarrow
\left\{
\begin{array}{ll}
v^*=\frac{1-a_2}{c_2},&\text{compact uniformly for~} x\in(0,L],\\
\frac{1-a_2}{2c_2}, &x=0.
\end{array}
\right.
\end{equation}
\end{proposition}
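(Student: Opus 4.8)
The plan is to treat the two components of the shadow system asymmetrically: for a frozen value of the amplitude $\lambda\ge 0$ I first solve the scalar singularly perturbed problem
\[
\epsilon^2 v''+\Big(-1+\frac{1}{a_2+b_2\lambda e^{-rv}+c_2 v}\Big)v=0,\qquad v'(0)=v'(L)=0,
\]
and only afterwards use the nonlocal constraint in (\ref{44}) to pin down $\lambda=\lambda_\epsilon$. For the scalar problem, Lemma \ref{lemma42} already supplies the uniform bound $0<v\le\frac{1-a_2}{c_2}$ together with $C^2$-control that is independent of the degenerating parameters, so the picture is that of a kinetics whose positive interior zero $v^*(\lambda)$, defined by $a_2+b_2\lambda e^{-rv^*(\lambda)}+c_2v^*(\lambda)=1$, governs the outer region while a single transition concentrates at the endpoint $x=0$. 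I would construct this boundary-layer solution by the standard inner/outer scheme: away from $x=0$ the solution is a small perturbation of the constant $v^*(\lambda)$, while on the inner scale $x=\epsilon y$ the profile solves a limiting autonomous equation from which the endpoint value $v_\epsilon(\lambda,0)$ is read off, its leading order being the half-state $\frac{1-a_2}{2c_2}$. Rigorously the layer can be produced either by matched asymptotics made exact through the implicit function theorem, or by building ordered sub- and super-solutions adapted to the layer, with continuity of $v_\epsilon(\lambda,\cdot)$ in $\lambda$ following from nondegeneracy of the linearization off the layer.

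Once $v_\epsilon(\lambda,\cdot)$ is available, solving (\ref{44}) reduces to finding a zero of the scalar function
\[
\Phi(\lambda,\epsilon):=\int_0^L\Big(-1+\frac{1}{a_1+b_1\lambda e^{-rv_\epsilon(\lambda,x)}+c_1v_\epsilon(\lambda,x)}\Big)e^{-rv_\epsilon(\lambda,x)}\,dx .
\]
The strategy is to track how the interior contribution, where $v_\epsilon\approx v^*(\lambda)$, balances against the layer contribution: the interior part carries a definite sign controlled by the competition condition (\ref{34}) and by the proximity of $a_1+c_1v^*(\lambda)$ to $1$, and it varies monotonically in $\lambda$ because the term $b_1\lambda e^{-rv}$ enlarges the denominator. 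Exhibiting a sign change of $\Phi(\cdot,\epsilon)$ over an interval of small amplitudes and invoking the intermediate value theorem yields a root $\lambda_\epsilon$, after which a monotonicity estimate gives local uniqueness and, through the implicit function theorem, smooth dependence on $\epsilon$. Passing to the limit $\epsilon\to0$ in $\Phi(\lambda_\epsilon,\epsilon)=0$ should then force $\lambda_\epsilon\to0$, and substituting this back into the layer analysis returns the two-scale limit (\ref{48}): $v_\epsilon\to\frac{1-a_2}{c_2}$ on every $[\delta,L]$ and $v_\epsilon(0)\to\frac{1-a_2}{2c_2}$.

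The crux of the argument, and the step I expect to be hardest, is the sharp control of the singular boundary layer together with the inversion of the nonlocal constraint. One must resolve the inner profile finely enough to identify the limiting endpoint value $\frac{1-a_2}{2c_2}$ and to quantify the dependence of $v_\epsilon(\lambda,\cdot)$ on $\lambda$ uniformly for small $\epsilon$, since the interior state is only weakly slaved to the amplitude $\lambda$ through the exponential factor $e^{-rv}$, and any imprecision there propagates into $\Phi$ and obstructs solving $\Phi(\lambda,\epsilon)=0$. A secondary but essential point is to establish the correct sign and strict monotonicity of $\Phi$ in $\lambda$ uniformly in $\epsilon$, which is what simultaneously delivers existence, local uniqueness, and the asymptotics $\lambda_\epsilon\to0$; for all of these I would lean on the a priori estimates of Lemma \ref{lemma42} and on the structural relation $u=\lambda e^{-rv}$ from Proposition \ref{proposition2} to keep the error terms controlled as $D_1,\chi\to\infty$ and $\epsilon\to0$.
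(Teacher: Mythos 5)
Your overall architecture does match the paper's: freeze $\lambda$, build a boundary--layer solution $v_\epsilon(\lambda;\cdot)$ of the scalar problem (\ref{46}), then determine $\lambda=\lambda_\epsilon$ from the integral constraint (\ref{47}). For the first step, your ``matched asymptotics made exact by the implicit function theorem'' is essentially what the paper does: it rescales the ground state $W_0$ of (\ref{417}) (Berestycki--Lions, Lemma \ref{lemma45}), glues it in with a cutoff, proves uniform invertibility of the linearization $\mathcal L_\epsilon$ (Lemma \ref{lemma46}), and closes with a contraction argument (Proposition \ref{proposition4}). Your alternative route via ordered sub- and super-solutions, however, is not viable here: in the stretched variable the profile is a homoclinic (spike), an unstable equilibrium of the associated scalar parabolic flow, and monotone iteration between ordered barriers cannot converge to such a solution.

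The genuine gap is in the second step, where you cancel the factor $\lambda$ and seek a sign change of
\[
\Phi(\lambda,\epsilon)=\int_0^L\Bigl(-1+\frac{1}{a_1+b_1\lambda e^{-rv_\epsilon}+c_1v_\epsilon}\Bigr)e^{-rv_\epsilon}\,dx
\]
over an interval of \emph{small} amplitudes $\lambda$, asserting that $\Phi(\lambda_\epsilon,\epsilon)=0$ then forces $\lambda_\epsilon\to0$. This cannot work. Since $0\le v_\epsilon\le v^*$ and $v_\epsilon(\lambda;\cdot)\to v^*-s_1(\lambda)$ away from the layer with $s_1(\lambda)\to0$ as $\lambda\to0$, dominated convergence gives
\[
\Phi(\lambda,\epsilon)\;\longrightarrow\; L\Bigl(\frac{1}{a_1+c_1v^*}-1\Bigr)e^{-rv^*}\neq 0
\qquad\text{as }(\lambda,\epsilon)\to(0,0^+),
\]
and the limit is nonzero because (\ref{34}) forces $a_1+c_1v^*\neq 1$ (it is $<1$ in the weak competition case and $>1$ in the strong one). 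In other words the interior contribution to $\Phi$ is $O(1)$ with a fixed sign while the layer contributes only $O(\epsilon)$, so there is no sign change for small $\lambda$; the relation $\Phi(\lambda_\epsilon,\epsilon)=0$ \emph{excludes} $\lambda_\epsilon\to0$ rather than forcing it, since a root of $\Phi$ requires the outer state $\bigl(\lambda e^{-r(v^*-s_1)},\,v^*-s_1\bigr)$ to sit near the coexistence equilibrium $(\bar u,\bar v)$, i.e.\ $\lambda$ of order $\bar u e^{r\bar v}$. The paper's proof rests precisely on the structure you removed: it keeps the factor $\lambda$ inside $G(\epsilon,\lambda)$ in (\ref{437}), so that $G(0,0)=0$, shows $\partial_\lambda G$ is continuous near the origin (using the uniform bound on $\partial_\lambda\tilde w_\epsilon$ supplied by the uniformly invertible operator $\tilde{\mathcal L}_\epsilon$), verifies $\partial_\lambda G(0,0)\neq0$ in (\ref{439}), and applies the implicit function theorem at $(\epsilon,\lambda)=(0,0)$ --- the origin is a zero of the constraint functional only because of that factor $\lambda$. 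Any repair of your argument must either restore this factor and argue as the paper does, or confront the fact that your normalized constraint $\Phi$ has no root near $\lambda=0$ at all.
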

For $\epsilon$ being sufficiently small, $v_\epsilon(\lambda_\epsilon;x)$ has a single inverted boundary spike at $x=0$, then one can construct solutions to (\ref{44}) with multi- boundary and interior spikes by periodically reflecting and extending $v_\epsilon(x)$ at $x=\pm L$, $\pm2L$,...

To prove Proposition \ref{proposition3}, we first choose $\lambda>0$ to be a predetermined fixed constant and establish nonconstant positive solutions $v_\epsilon(\lambda;x)$ to the following problem
\begin{equation}\label{46}
\left\{
\begin{array}{ll}
\epsilon^2 v''_\epsilon+(-1+\frac{1}{a_2+b_2\lambda  e^{-rv_\epsilon}+c_2v_\epsilon})v_\epsilon =0, & x \in(0,L),\\
v_\epsilon(x)>0, x\in(0,L); v_\epsilon'(0)=v_\epsilon'(L)=0
\end{array}
\right.
\end{equation}
then we find $\lambda=\lambda_\epsilon>0$ such that $(\lambda_\epsilon,v_\epsilon(\lambda_\epsilon;x))$ satisfies the integral constraint
\begin{equation}\label{47}
\int_0^L \Big(-1+\frac{1}{a_1+b_1\lambda_\epsilon e^{-rv_\epsilon}+c_1v_\epsilon}\Big)\lambda_\epsilon e^{-rv_\epsilon} dx=0.
\end{equation}

Denote
\begin{equation}\label{49}
w_\epsilon(x)=v^*-v_\epsilon(x)=\frac{1-a_2}{c_2}-v_\epsilon(x)
\end{equation}
then (\ref{46}) becomes
\begin{equation}\label{410}
\left\{
\begin{array}{ll}
\epsilon^2 w''_\epsilon+ f(\lambda;w_\epsilon)=0, & x \in(0,L),\\
w_\epsilon'(0)=w_\epsilon'(L)=0,
\end{array}
\right.
\end{equation}
where
\begin{equation}\label{411}
f(\lambda;w_\epsilon)=-\Big(-1+\frac{1}{1+b_2\lambda e^{-r(v^*-w_\epsilon)}-c_2w_\epsilon}\Big)\Big(v^*-w_\epsilon\Big).
\end{equation}
It is equivalent to study the existence and asymptotic behaviors of (\ref{410}) in order to prove Proposition \ref{proposition3}.  We first collect some facts about $f(\lambda;s)$ introduced in (\ref{411}).  We denote
\[{f}(\lambda;s)=\frac{g(\lambda;s)(v^*-s)}{1+g(\lambda;s)}, s\in (0,\infty),\]
where
\begin{equation}\label{412}
g(\lambda;s)=b_2\lambda e^{-r(v^*-s)}-c_2s.
\end{equation}
\begin{lemma}\label{lemma43}
Let $r\in(0,\infty)$.  For each $\lambda\in(0,\frac{c_2}{b_2r}e^{rv^*-\frac{r}{c_2}-1})$, $g(\lambda;s)$ has two positive roots $s_1<s_2$ and $1+g(\lambda;s)$ has two positive roots $s_3<s_4$ such that $s_1<s_3<s_4<s_2$; moreover as $\lambda \rightarrow 0$, $(s_1,s_3)\rightarrow (0,\frac{1}{c_2})$ and $s_2$, $s_4\rightarrow \infty$.
\end{lemma}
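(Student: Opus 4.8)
The plan is to reduce the whole statement to the elementary calculus of two single-variable functions. First I would absorb the constant prefactor by writing
$g(\lambda;s)=Ae^{rs}-c_2 s$ with $A:=b_2\lambda e^{-rv^*}>0$, so that the admissible range $\lambda\in\big(0,\frac{c_2}{b_2 r}e^{rv^*-\frac{r}{c_2}-1}\big)$ is exactly equivalent to $A\in\big(0,\frac{c_2}{r}e^{-1-\frac{r}{c_2}}\big)$. A root of $g$ is then a solution of $A=h(s):=c_2 s e^{-rs}$, while a root of $1+g$ is a solution of $A=\tilde h(s):=(c_2 s-1)e^{-rs}$, so that the whole lemma becomes a statement about the level sets $\{h=A\}$ and $\{\tilde h=A\}$.

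Next I would analyze $h$ and $\tilde h$. For $h$ one has $h(0)=0$, $h(s)\to 0$ as $s\to\infty$, and a single interior critical point at $s=1/r$ which is the global maximum, with $h(1/r)=\frac{c_2}{re}$; hence for every $A\in(0,\frac{c_2}{re})$ the equation $A=h(s)$ has exactly two roots $s_1<1/r<s_2$, and $g<0$ precisely on $(s_1,s_2)$. For $\tilde h$ one checks $\tilde h<0$ on $(0,1/c_2)$, $\tilde h(1/c_2)=0$, $\tilde h>0$ with $\tilde h\to0$ on $(1/c_2,\infty)$, and a single interior critical point at $s^*=\frac1r+\frac1{c_2}$ which is the global maximum, with $\tilde h(s^*)=\frac{c_2}{r}e^{-1-\frac{r}{c_2}}$; hence for $A$ in the admissible range the equation $A=\tilde h(s)$ has exactly two roots $1/c_2<s_3<s_4$, and $1+g<0$ precisely on $(s_3,s_4)$. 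The crucial comparison is $\max\tilde h=\frac{c_2}{r}e^{-1-\frac{r}{c_2}}<\frac{c_2}{re}=\max h$, which holds because $e^{-r/c_2}<1$; this is exactly why the threshold in the statement is dictated by $\tilde h$, and why imposing $A<\max\tilde h$ automatically forces $A<\max h$, so that $g$ and $1+g$ both acquire two positive roots simultaneously.

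The ordering $s_1<s_3<s_4<s_2$ would then come essentially for free from the pointwise inequality $g<1+g$: if $1+g(s)=0$ then $g(s)=-1<0$, so both $s_3$ and $s_4$ lie in the open interval $(s_1,s_2)$ on which $g$ is negative; combined with $s_3<s_4$ this gives the interlacing, with strictness at the endpoints since $g(s_3)=g(s_4)=-1\neq0$. Finally the $\lambda\to0$ (that is $A\to0$) asymptotics follow from the monotonicity just established: near $0$ one has $h(s)\sim c_2 s$, so $A=h(s_1)$ forces $s_1\to0$; near $1/c_2$ the function $\tilde h$ vanishes and is increasing, so $A=\tilde h(s_3)\to0^+$ forces $s_3\to1/c_2$; and since $s_2,s_4$ sit on the decaying branches of $h,\tilde h$ (where these functions tend to $0$ only as $s\to\infty$), $A\to0$ forces $s_2,s_4\to\infty$.

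I do not expect a genuine obstacle here, as the argument is entirely the calculus of $h$ and $\tilde h$ together with strict convexity of $g$ and $1+g$ (which caps the number of zeros at two). The only points needing care are bookkeeping the substitution $A=b_2\lambda e^{-rv^*}$ so that the stated threshold $\frac{c_2}{b_2 r}e^{rv^*-\frac{r}{c_2}-1}$ emerges exactly, and upgrading the limit statements from heuristics to proofs, which is handled by combining strict monotonicity on each branch with the intermediate value theorem and continuity of the roots as functions of $A$.
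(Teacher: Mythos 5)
Your proof is correct, and it takes a route that is dual to, rather than a rewrite of, the paper's. The paper fixes $\lambda$ and works with $g$ itself: it observes $g''>0$, computes the critical point $s^*=\frac1r\ln\bigl(\frac{c_2e^{rv^*}}{b_2\lambda r}\bigr)$ of $1+g$, and shows that the stated bound on $\lambda$ is exactly the condition $1+g(s^*)<0$; convexity (together with $g(0)>0$ and $g\to\infty$) then yields the two pairs of roots, with the ordering left implicit, and the limits as $\lambda\to0$ are obtained by continuity of the roots in $\lambda$ (setting $\lambda=0$ gives $s_1=0$, $s_3=\frac{1}{c_2}$) together with $s_2>s_4>s^*(\lambda)\to\infty$. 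You instead normalize $A=b_2\lambda e^{-rv^*}$ and convert the two root problems into level-set problems $A=h(s)$ and $A=\tilde h(s)$ for the $\lambda$-independent unimodal functions $h(s)=c_2se^{-rs}$ and $\tilde h(s)=(c_2s-1)e^{-rs}$. The key inequality is literally the same in both — your condition $A<\max\tilde h=\frac{c_2}{r}e^{-1-r/c_2}$ is the paper's $1+g(s^*)<0$ after exponentiation — but your formulation buys three things the paper glosses over: it makes transparent why the threshold has exactly the stated form (it is $\max\tilde h$ pulled back through the substitution, and $\max\tilde h<\max h$ explains why no separate condition on $g$ is needed, i.e.\ the paper's remark that roots of $1+g$ force roots of $g$); it gives the exact count of two roots and proves the interlacing $s_1<s_3<s_4<s_2$ via the pointwise inequality $g<1+g$, which the paper asserts without argument; and it yields $s_2,s_4\to\infty$ from the fixed decaying branches of $h,\tilde h$, where the paper instead needs the moving critical point $s^*(\lambda)\to\infty$. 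The paper's version is shorter; yours is more self-contained and pins down details the published proof leaves to the reader.
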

\begin{proof}
It is easy to see that the roots of $g(s)$ or $1+g(s)$ must be positive if they exist; moreover, if $1+g(s)$ has positive roots, so does $g(s)$.  Hence we only need to show that $1+g(s)$ has positive roots and it is equivalent to show that its minimum over $\mathbb R^+$ is negative.  Let $s^*$ be the critical point of $1+g(s)$, we have from straightforward calculations that
\[
\left\{
\begin{array}{ll}
g(0)=b_2\lambda e^{-rv^*}>0,\\
g'(s^*)=0,~s^*=\frac{1}{r}\ln(\frac{c_2e^{rv^*}}{b_2\lambda r})>\frac{1}{c_2}+\frac{1}{r},\\
g^{''}(s)>0,~s\in (-\infty,\infty),
\end{array}
\right.
\]
therefore $1+g(s)$ is convex and its minimum value is $1+g(s^*)=1+\frac{c_2}{r}-\frac{c_2}{r}\ln(\frac{c_2e^{rv^*}}{b_2\lambda r})<0$ as desired.  Since $s_i$ is continuous in $\lambda$, putting $\lambda=0$ in $1+g(s)$ gives $s_1=0$ and $s_3=\frac{1}{c_2}$.  Moreover, $s_2>s_4>s^* \rightarrow \infty$ as $\lambda\rightarrow 0$.  This finishes the proof.
\end{proof}
Graphes of $g$ and $1+g$ are presented in Figure \ref{figure1}.
\begin{figure}[h!]
\centering
\includegraphics[width=2.4in,height=2.4in]{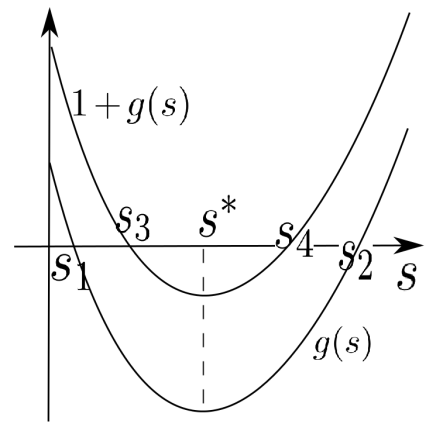}
\caption{Graphes of $g(s)$ and $1+g(s)$.}\label{figure1}
\end{figure}
\begin{lemma}\label{lemma44}
Let $r\in(0,\infty)$.  Then $s_1<v^*<s_3$ if
\begin{equation}\label{413}
\lambda<\min\Big\{\frac{c_2}{b_2r}e^{rv^*-\frac{r}{c_2}-1},\frac{1-a_2}{b_2}\Big\}.
\end{equation}
\end{lemma}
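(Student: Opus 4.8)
The plan is to locate $v^*$ directly by evaluating $g(\lambda;\cdot)$ at $s=v^*$ and comparing the resulting value with the levels $g=0$ and $g=-1$ that define $s_1$ and $s_3$. The starting observation is that the exponential collapses at $s=v^*$: since $e^{-r(v^*-v^*)}=1$ and $c_2v^*=1-a_2$, the definition \eqref{412} gives the explicit value
\[
g(\lambda;v^*)=b_2\lambda-(1-a_2).
\]
Because Lemma \ref{lemma43} shows $g(\lambda;\cdot)$ is strictly convex with its unique minimum at $s^*$, it is strictly decreasing on $[0,s^*]$, and its roots interlace as $s_1<s_3<s^*<s_4<s_2$ with $g(s_1)=0$ and $g(s_3)=-1$. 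If I can place $v^*$ on this decreasing branch, then the desired ordering $s_1<v^*<s_3$ becomes equivalent to the single scalar condition $-1=g(s_3)<g(\lambda;v^*)<g(s_1)=0$.

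The first step is therefore to verify $v^*<s^*$, so that $v^*$ sits on the strictly decreasing part of $g$. I would write $s^*=v^*+\frac{1}{r}\ln\frac{c_2}{b_2\lambda r}$, so that $v^*<s^*$ is equivalent to $\lambda<\frac{c_2}{b_2 r}$. This is guaranteed by the first bound in \eqref{413}: since $rv^*-\frac{r}{c_2}-1=-\frac{ra_2}{c_2}-1<0$, the exponential factor there is strictly less than $1$, so the hypothesis $\lambda<\frac{c_2}{b_2 r}e^{rv^*-\frac{r}{c_2}-1}$ forces $\lambda<\frac{c_2}{b_2 r}$, whence $v^*<s^*$. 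With $v^*$, $s_1$ and $s_3$ all lying in $[0,s^*)$, strict monotonicity reduces the lemma to two scalar inequalities on $g(\lambda;v^*)$.

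Second, I would check these two inequalities. The upper one, $g(\lambda;v^*)<g(s_1)=0$, reads $b_2\lambda<1-a_2$, which is precisely the second bound $\lambda<\frac{1-a_2}{b_2}$ in \eqref{413}; by monotonicity this yields $s_1<v^*$. The lower one, $g(\lambda;v^*)>g(s_3)=-1$, reads $b_2\lambda>-a_2$, which holds automatically since $b_2\lambda>0>-a_2$ for $a_2\in(0,1)$; by monotonicity this yields $v^*<s_3$. Combining gives $s_1<v^*<s_3$.

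The only genuinely delicate point is the placement step $v^*<s^*$. Without it, the value condition $g(\lambda;v^*)\in(-1,0)$ only locates $v^*$ in $(s_1,s_3)\cup(s_4,s_2)$, because the convex $g$ attains each level in $(-1,0)$ twice, once on each branch, and the spurious interval $(s_4,s_2)$ on the increasing branch must be excluded. The first bound in \eqref{413} is exactly what rules out the right branch, and recognizing that this bound does double duty, both guaranteeing existence of the roots via Lemma \ref{lemma43} and confining $v^*$ to the left branch, is the crux of the argument; the remaining manipulations are the elementary evaluations above.
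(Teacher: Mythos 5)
Your proof is correct and takes essentially the same route as the paper's: both reduce the claim to the scalar evaluation $g(\lambda;v^*)=b_2\lambda-(1-a_2)$ and the comparisons $-1=g(s_3)<g(\lambda;v^*)<g(s_1)=0$ under \eqref{413}. Your write-up is in fact slightly more careful than the paper's one-line argument, which never explicitly excludes the right branch $(s_4,s_2)$ of the convex function $g$; your observation that the first bound in \eqref{413} forces $\lambda<\frac{c_2}{b_2r}$ (since the exponent $rv^*-\frac{r}{c_2}-1=-\frac{ra_2}{c_2}-1<0$) and hence $v^*<s^*$ supplies exactly the branch-placement step that the paper leaves implicit.
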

\begin{proof}
We only need to show that $-1=g(s_3)<g(v^*)<g(s_1)=0$ and $g(s^*)<-1$, where we already have from above that $g(v^*)=b_2\lambda-c_2v^*$; moreover, $g(s^*)=\frac{c_2}{r}-\frac{c_2}{r}\ln(\frac{c_2e^{rv^*}}{b_2\lambda r})<-1$.
\end{proof}
We shall assume condition (\ref{413}) from now on.  Let us introduce the notations
\[\tilde w(x)=w(x)-s_1\]
and
\begin{equation}\label{414}
\tilde f(\lambda;s)=f(\lambda;s+s_1)=-\Big(-1+\frac{1}{1+b_2\lambda e^{-r(v^*-s_1-s)}-c_2(s+s_1)}\Big)\Big(v^*-s_1-s\Big),
\end{equation}
then $\tilde w$ satisfies
\begin{equation}\label{415}
\left\{
\begin{array}{ll}
\epsilon^2 \tilde w''+\tilde f(\lambda;\tilde w)=0,&x\in(0,L),\\
\tilde w'_\epsilon(0)=\tilde w'_\epsilon(L)=0.
\end{array}
\right.
\end{equation}
Moreover, if $\tilde w_\epsilon(x)$ is a positive solution to (\ref{415}) then
\begin{equation}
v_\epsilon(x)=v^*-s_1-\tilde w_\epsilon
\end{equation}
is a solution to (\ref{46}).
\begin{lemma}\label{lemma45}
For each $\lambda$ satisfying (\ref{413}), the following problem has a unique solution $W_0=W_0(\lambda;z)$
\begin{equation}\label{417}
\left\{
\begin{array}{ll}
W_0 ''+\tilde f(\lambda; W_0)=0,z\in \mathbb R,\\
W_0(\lambda;0)=\frac{v^*-s_1}{2}>0, z\in (0,\infty); W'_0(\lambda;0)=0,\\
\lim_{z\rightarrow \pm\infty} W_0(\lambda;z)=0,
\end{array}
\right.
\end{equation}
such that $W_0\in C^2(\mathbb R)$, $W_0(\lambda;z)>0$ in $\mathbb R$ and $W_0'(\lambda;z)z<0$ for $z\in\mathbb R\backslash \{0\}$.  Moreover, $W_0(\lambda;z)$ is radially symmetric and $W_0$, $W'_0$, $W''_0$ decays exponentially at $\infty$ uniformly in $\lambda$, \emph{i.e.}, there exists $C_0$, $\eta>0$ independent of $\lambda$ such that
\[0\leq W_0(\lambda;z), \vert W'_0(\lambda;z)\vert, \vert W''_0(\lambda;z)\vert \leq C_0 e^{-\eta \vert z\vert}, z\in \mathbb R. \]
\end{lemma}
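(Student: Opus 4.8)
The plan is to treat (\ref{417}) as a homoclinic-orbit problem for the autonomous, conservative second-order ODE $W_0''+\tilde f(\lambda;W_0)=0$ and to construct $W_0$ through its first integral. First I would record the energy identity obtained by multiplying the equation by $W_0'$ and integrating,
\[
\frac{1}{2}\big(W_0'(\lambda;z)\big)^2+\tilde F(\lambda;W_0(\lambda;z))=E,\qquad \tilde F(\lambda;s):=\int_0^s \tilde f(\lambda;\tau)\,d\tau,
\]
which is constant along every trajectory. Since a homoclinic loop must have $W_0,W_0'\to 0$ as $z\to\pm\infty$ and $\tilde f(\lambda;0)=f(\lambda;s_1)=0$, the relevant level is $E=\tilde F(\lambda;0)=0$, so the orbit lives on $\frac{1}{2}(W_0')^2=-\tilde F(\lambda;W_0)$.

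Next I would extract the phase-plane structure from Lemmas \ref{lemma43}--\ref{lemma44}. Because $s_1<v^*<s_3$, the origin is a nondegenerate zero of $\tilde f$ with $\tilde f'(\lambda;0)=f'(\lambda;s_1)=g'(\lambda;s_1)(v^*-s_1)<0$, since $g'(\lambda;s_1)<0$ on the decreasing branch of the convex function $g$. Hence $W_0=0$ is a hyperbolic saddle of the flow with linear rates $\pm\eta$, $\eta=\sqrt{|\tilde f'(\lambda;0)|}$. One then reads off the sign of $\tilde f$ on $(0,A]$ with $A=\frac{v^*-s_1}{2}$ to conclude $\tilde F(\lambda;0)=0$, $\tilde F<0$ on $(0,A)$, and $\tilde F(\lambda;A)=0$, so that $A$ is exactly the turning point of the zero-energy orbit; I would verify this amplitude identity $\tilde F(\lambda;\frac{v^*-s_1}{2})=0$ directly from an explicit antiderivative of $\tilde f$.

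Then I construct $W_0$ itself: on $z\ge 0$ I solve the separated first-order equation $W_0'=-\sqrt{-2\tilde F(\lambda;W_0)}$ with $W_0(\lambda;0)=A$. Because the right-hand side is negative on $(0,A)$ and vanishes nondegenerately at both endpoints, this yields a strictly decreasing $C^2$ solution with $W_0>0$ and $W_0(\lambda;z)\to 0$ as $z\to+\infty$. Extending by evenness, $W_0(\lambda;-z)=W_0(\lambda;z)$, produces a solution on all of $\R$; autonomy of the equation together with the matching $W_0'(\lambda;0)=0$ guarantees the reflected function is $C^2$ and solves (\ref{417}), while radial symmetry and $W_0'(\lambda;z)z<0$ for $z\ne 0$ are built into the construction. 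Uniqueness follows because any solution decaying to the saddle at both ends must lie on its unique stable/unstable separatrix, and the normalization $W_0'(0)=0$ fixes the point on it; smoothness of $\tilde f$ then upgrades $W_0$ to $C^\infty$ by bootstrapping.

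Finally, the exponential-decay bound comes from the saddle: near $z=+\infty$ one has $W_0''=|\tilde f'(\lambda;0)|\,W_0+o(W_0)$, so $W_0\sim e^{-\eta z}$ and likewise for $W_0',W_0''$. The crux, and the step I expect to be the main obstacle, is making $C_0$ and $\eta$ uniform in $\lambda$ over the admissible range (\ref{413}). As $\lambda\to 0$ one has $s_1\to 0$ while $v^*$ stays fixed, so I must show $|f'(\lambda;s_1)|=|g'(\lambda;s_1)|(v^*-s_1)$ stays bounded away from $0$ and $\infty$; this needs uniform control of $g'(\lambda;s_1)$ as the two roots of $g$ spread apart, which I would obtain from Lemma \ref{lemma43} and a continuity/compactness argument on the closed admissible interval of $\lambda$, combined with a comparison estimate on $\frac{1}{2}(W_0')^2=-\tilde F(\lambda;W_0)$ to convert the uniform linear rate into the uniform pointwise decay constant.
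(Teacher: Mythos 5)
Your quadrature/phase--plane strategy is, in outline, exactly the mechanism behind the one--dimensional ground--state theorem that the paper simply cites: the paper's proof verifies condition (6.2) of Berestycki--Lions \cite{BL} for $\tilde f$ and then invokes their Theorem 5 (existence, evenness, monotonicity, uniqueness up to translation) and Remark 6.3 (exponential decay, using $\tilde f'(\lambda;0)<0$). The problem is that your pivotal step is false. You claim that $\tilde F<0$ on $(0,A)$ and $\tilde F(\lambda;A)=0$ for $A=\frac{v^*-s_1}{2}$, so that $A$ is the turning point of the zero--energy orbit. But on the entire interval $(0,v^*-s_1)$ one has $g(\lambda;s+s_1)<0$, $1+g(\lambda;s+s_1)>0$ and $v^*-s_1-s>0$ (Lemmas \ref{lemma43}--\ref{lemma44}), hence $\tilde f(\lambda;s)<0$ there; therefore $\tilde F$ is strictly decreasing on $(0,v^*-s_1]$ and $\tilde F\big(\lambda;\tfrac{v^*-s_1}{2}\big)<0$ strictly. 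Your own sign statement is self--contradictory: if $\tilde f<0$ on $(0,A]$, then $\tilde F(A)<\tilde F(s)<0$ for $0<s<A$, so the ``amplitude identity'' you propose to verify from an antiderivative cannot hold. The actual turning point is the first positive zero $s_0$ of $\tilde F$, which the paper locates in $(v^*-s_1,\,s_3-s_1)$ by the Intermediate Value Theorem, using $\tilde F<0$ on $(0,v^*-s_1)$ together with $\tilde F(\lambda;s)\to+\infty$ as $s\to(s_3-s_1)^-$, and for which $\tilde f(\lambda;s_0)>0$; these are precisely the hypotheses of \cite{BL}.

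The error is fatal to the construction as written. With initial data $W_0(0)=\frac{v^*-s_1}{2}$, $W_0'(0)=0$, the conserved energy is $E=\tilde F\big(\lambda;\tfrac{v^*-s_1}{2}\big)<0$, so the orbit does not lie on the zero--energy level, and your separated equation $W_0'=-\sqrt{-2\tilde F(\lambda;W_0)}$ is inconsistent with your own initial condition. Worse, since $\tilde f$ changes sign from negative to positive at $v^*-s_1$, the equilibrium $W\equiv v^*-s_1$ is a center of the conservative flow, and the orbit through $\big(\tfrac{v^*-s_1}{2},0\big)$ is a closed periodic orbit oscillating between $\tfrac{v^*-s_1}{2}$ and some $W_+>v^*-s_1$: it never decays at $\pm\infty$. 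A correct quadrature proof must place the maximum at $s_0$, which is what the \cite{BL} solution does. (In fairness, the normalization $W_0(\lambda;0)=\frac{v^*-s_1}{2}$ written in (\ref{417}) is not what Theorem 5 of \cite{BL} delivers either --- the paper's proof silently ignores it --- so part of your difficulty traces back to the statement itself; but a proof built on forcing that value cannot succeed.) Your closing worry about uniformity of the decay rate in $\lambda$ is legitimate but comparatively minor: since $g(\lambda;s_1)=0$, one computes $\tilde f'(\lambda;0)=g'(\lambda;s_1)(v^*-s_1)$, which tends to $-c_2v^*<0$ as $\lambda\to0$ and stays negative on the admissible range (\ref{413}); the paper disposes of this with the single observation $\tilde f'(\lambda;0)<0$ and a citation of Remark 6.3 in \cite{BL}.
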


\begin{proof}
We introduce the transformation
\[\tilde F(\lambda; s)=\int_0^s \tilde f(\lambda;t)dt=\int_{s_1}^{s+s_1} f(\lambda;t)dt.\]
therefore $\tilde F(\lambda; s)<0$ for $s\in(0,v^*-s_1)$--see the graph of $\tilde f$ for example; on the other hand, we can show that $\lim_{s\rightarrow (s_3-s_1)^{-}}\tilde F(\lambda; s)=+\infty$ and according to Intermediate value theorem, there exists $s_0\in(v^*-s_1,s_3-s_1)$ such that $\tilde F(\lambda; s_0)=0$; moreover, $s_0=\inf \{s>0~\vert~\tilde F(\lambda; s)=0\}$ and $\tilde f(\lambda; s_0)>0$.  Therefore $\tilde f$ satisfies condition (6.2) in \cite{BL} and Theorem 5 there implies our existence results.  Moreover, the exponential decay follows from Remark 6.3 in \cite{BL}, since $\tilde f'(\lambda; 0)<0$ for all $\lambda$ satisfying (\ref{413}).
\end{proof}

By the unique solution $W_0$ to (\ref{417}), we now construct a boundary spike to (\ref{414}) hence a boundary layer to (\ref{46}).  To this end, we choose a smooth cut--off function $\rho (x)$ such that $\rho (x)\equiv1$ for $\vert x \vert \leq \frac{L}{3}$, $\rho (x) \equiv 0$ for $\vert x \vert \geq \frac{2L}{3}$ and $\rho(x)\in[0,1]$ for $x\in \mathbb R$.  Denoting
\[W_{\epsilon,\lambda}(x)=\rho (x)W_0(\lambda; x/\epsilon),\]
we want to prove that (\ref{415}) has a solution in the form $\tilde w_\epsilon(\lambda,x)=W_{\epsilon,\lambda}(x)+\epsilon \psi(\lambda,x)$. Then $\psi$ satisfies
\begin{equation}\label{418}
\mathcal{L}_\epsilon \psi+\mathcal{P}_\epsilon+\mathcal{Q}_\epsilon=0,
\end{equation}
where
\begin{equation}\label{419}
\mathcal{L}_\epsilon=\epsilon^2\frac{d^2}{dx^2}+\tilde f_w(\lambda; W_{\epsilon,\lambda}(x)),
\end{equation}
\begin{equation}\label{420}
\mathcal{P}_\epsilon=\epsilon^{-1}\Big(\epsilon^2\frac{d^2}{dx^2}W_{\epsilon,\lambda}+\tilde{f}(\lambda; W_{\epsilon,\lambda})\Big),
\end{equation}
and
\begin{equation}\label{421}
\mathcal{Q}_\epsilon=\epsilon^{-1}\Big(\tilde{f}(\lambda; W_{\epsilon,\lambda}+\epsilon\psi)-\tilde{f}(\lambda; W_{\epsilon,\lambda})-\epsilon\psi\tilde{f}_w(\lambda; W_{\epsilon,\lambda})\Big).
\end{equation}
According to (\ref{418})--(\ref{421}), $\mathcal{P}_\epsilon$ and $\mathcal{Q}_\epsilon$ measure the accuracy that $W_{\epsilon,\lambda}(x)$ approximates solution $w_\epsilon(\lambda,x)$.  Our existence result is a consequence of several lemmas.
Set
\[C^2_\textbf{n}([0,L])=\{u\in C^2([0,L]): u'(0)=u'(L)=0\},\]
then we first present the following set of results.
\begin{lemma}\label{lemma46}
Let $r\in(0,\infty)$.  Suppose that $a_2\in(0,1)$ and $\lambda$ satisfies (\ref{413}).  Then there exists a small $\epsilon_0$ such that if $\epsilon\in(0,\epsilon_0)$, $\mathcal L_\epsilon$ with domain $C^2_\textbf{n}([0,L])$ has an inverse $\mathcal L_{\epsilon}^{-1}$; moreover,  $\mathcal L_{\epsilon}^{-1}: C([0,L])\rightarrow C([0,L])$ is uniformly bounded in $\epsilon$, \emph{i.e.}, there exists $C_0$ independent of $\epsilon$ such that
\begin{equation}\label{422}
\sup_{x\in[0,L]}\vert \mathcal L_{\epsilon}^{-1} f \vert \leq C_0 \sup_{x\in[0,L]}\vert f \vert, \forall f\in C([0,L]).
\end{equation}
\end{lemma}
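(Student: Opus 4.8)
The plan is to prove uniform invertibility of $\mathcal{L}_\epsilon$ by a blow-up/contradiction argument that reduces the estimate (\ref{422}) to the non-degeneracy of a limiting operator on the half-line. The structural observation driving everything---and the feature that makes a \emph{boundary} spike easier than an interior one---is that the translation mode $W_0'$ of the limiting profile is \emph{odd}, so it is annihilated by the Neumann condition at the spike location $x=0$; consequently the limiting operator has trivial kernel and no Lyapunov--Schmidt reduction is required.

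First I would identify the limiting operator. Rescaling $z=x/\epsilon$ and using that $\rho\equiv1$ near $x=0$ (so $W_{\epsilon,\lambda}(\epsilon z)\to W_0(\lambda;z)$ locally uniformly), $\mathcal{L}_\epsilon$ in (\ref{419}) converges formally to
\[
L_0:=\frac{d^2}{dz^2}+\tilde f_w\big(\lambda;W_0(\lambda;z)\big),\qquad z\in[0,\infty),
\]
equipped with $\psi'(0)=0$. I would show $\ker L_0=\{0\}$ among bounded functions as follows. Since $\tilde f_w(\lambda;W_0(z))\to\tilde f'(\lambda;0)<0$ as $z\to\infty$ (Lemma \ref{lemma45}), the equation $L_0\psi=0$ has one exponentially decaying and one exponentially growing solution at $+\infty$, so every bounded solution is a multiple of the decaying one, which is $W_0'$ (differentiate (\ref{417}) in $z$). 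But $W_0$ is radially symmetric by Lemma \ref{lemma45}, hence $W_0'$ is odd and $W_0'(\lambda;0)=0$, while $W_0''(\lambda;0)\neq0$ because $z=0$ is a strict maximum of the profile (from $W_0'(\lambda;z)z<0$ in Lemma \ref{lemma45}); thus $W_0'$ fails $\psi'(0)=0$, and $L_0$ has trivial kernel.

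Next I would run the contradiction argument against (\ref{422}). Suppose it fails: there exist $\epsilon_n\to0$ and $u_n\in C^2_{\mathbf{n}}([0,L])$ with $\|u_n\|_{L^\infty}=1$ and $g_n:=\mathcal{L}_{\epsilon_n}u_n$ satisfying $\|g_n\|_{L^\infty}\to0$. Split $[0,L]$ into an inner region $[0,\delta_0]$ and an outer region $[\delta_0,L]$ for a small fixed $\delta_0>0$. In the inner region set $\hat u_n(z)=u_n(\epsilon_n z)$; interior elliptic estimates give uniform $C^2_{\mathrm{loc}}$ bounds, and a limit $\hat u_\infty$ solves $L_0\hat u_\infty=0$ on $[0,\infty)$ with $\hat u_\infty'(0)=0$ and $\hat u_\infty$ bounded, so the non-degeneracy above forces $\hat u_\infty\equiv0$. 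In the outer region, $W_{\epsilon_n,\lambda}$ is exponentially small (indeed $x/\epsilon_n\to\infty$ there, so the cutoff $\rho$ plays no role), whence the potential $\tilde f_w(\lambda;W_{\epsilon_n,\lambda})\le-\delta<0$ uniformly in $n$; the operator $\epsilon_n^2\partial_{xx}+(\text{potential}\le-\delta)$ obeys the maximum principle, so on $[\delta_0,L]$ (with the Neumann condition at $x=L$) $u_n$ is controlled by $\tfrac1\delta\|g_n\|_{L^\infty}$ together with its value at the interface $x=\delta_0$, both of which tend to $0$. Combining the two regions gives $\|u_n\|_{L^\infty}\to0$, contradicting $\|u_n\|_{L^\infty}=1$, and (\ref{422}) follows.

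The main obstacle I expect is the matching of the inner and outer estimates: one must quantify the exponential smallness of $W_{\epsilon_n,\lambda}$ in the outer region so that the maximum-principle bound there is genuinely uniform in $n$, and rule out that the unit $L^\infty$ mass escapes into the intermediate scale (i.e.\ into points $x_n$ with $x_n/\epsilon_n\to\infty$ but $x_n\to0$) where neither the blow-up limit nor the crude outer bound directly applies. The spectral non-degeneracy itself is clean once the oddness of $W_0'$ is in hand; everything ultimately rests on the sign condition $\tilde f'(\lambda;0)<0$ furnished by Lemma \ref{lemma45}.
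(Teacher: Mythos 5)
Your strategy is the same as the paper's: a contradiction/blow-up argument reducing (\ref{422}) to non-degeneracy of the limiting half-line operator, and your ODE characterization of its bounded kernel (multiples of the decaying solution $W_0'$, which violates the Neumann condition because $W_0''(0)\neq 0$) is equivalent to the Wronskian identity the paper uses, $0=\tilde\Psi_0'W_0'\vert_0^\infty-\tilde\Psi_0W_0''\vert_0^\infty$, which rests on the same two facts. The genuine gap is exactly the one you flag and then leave unresolved: with the outer region taken as $[\delta_0,L]$ for \emph{fixed} $\delta_0>0$, your maximum-principle bound needs the interface value $|u_n(\delta_0)|\to 0$, but the inner blow-up only yields $\hat u_n\to 0$ in $C^2_{\mathrm{loc}}$, i.e.\ $u_n\to 0$ uniformly on sets $[0,R\epsilon_n]$ with $R$ fixed; since $\delta_0/\epsilon_n\to\infty$, this says nothing about $u_n(\delta_0)$. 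So the assertion ``both of which tend to $0$'' is unjustified and the two regions do not glue: as written, the unit $L^\infty$ mass could indeed sit at an intermediate scale $x_n\to 0$, $x_n/\epsilon_n\to\infty$.

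The repair is cheap and stays inside your framework: match at $x=R\epsilon_n$ instead of at fixed $\delta_0$. By the uniform exponential decay in Lemma \ref{lemma45} (and $0\leq\rho\leq 1$) one has $W_{\epsilon_n,\lambda}(x)\leq C_0e^{-\eta R}$ for all $x\geq R\epsilon_n$, so by continuity of $\tilde f_w$ and $\tilde f_w(\lambda;0)<0$ you may fix $R$ so large that the potential satisfies $\tilde f_w(\lambda;W_{\epsilon_n,\lambda})\leq-\delta<0$ on all of $[R\epsilon_n,L]$, uniformly in $n$ --- this interval contains precisely the intermediate scales you were worried about. The maximum principle on $[R\epsilon_n,L]$ (Neumann at $x=L$) then gives $\sup_{[R\epsilon_n,L]}\vert u_n\vert\leq\max\{\vert u_n(R\epsilon_n)\vert,\,\delta^{-1}\Vert g_n\Vert_{L^\infty}\}$, and now the interface value $u_n(R\epsilon_n)=\hat u_n(R)\to\hat u_\infty(R)=0$ \emph{is} controlled by the blow-up limit, closing the argument. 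This is, in different clothing, exactly what the paper's claim ``$Q_i\leq C_0\epsilon$'' accomplishes: it rescales around the maximum point $Q_i$ and, when $Q_i/\epsilon_i\to\infty$, derives $\hat\Psi_0''(0)=-\tilde f_w(\lambda;0)>0$ at an interior maximum, a contradiction --- i.e.\ no $L^\infty$ mass survives outside an $O(\epsilon_i)$ neighborhood of the origin. One further small point: your justification of $W_0''(0)\neq0$ from ``$z=0$ is a strict maximum'' is insufficient (a strict maximum can have vanishing second derivative); the clean argument is that $W_0''(0)=0$ together with $W_0'(0)=0$ would give $\tilde f(\lambda;W_0(0))=0$, and then uniqueness for the ODE (\ref{417}) would force $W_0$ to be constant, contradicting its decay at infinity.
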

\begin{lemma}\label{lemma47}
Suppose that the conditions in Lemma \ref{lemma46} hold.  Then there exist $C_1>0$ and small $\epsilon_1>0$ such that for all $\epsilon\in(0,\epsilon_1)$
\[\sup_{x\in[0,L]} \vert\mathcal{P}_\epsilon (x)\vert \leq  C_1.\]
\end{lemma}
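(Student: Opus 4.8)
The plan is to compute $\mathcal{P}_\epsilon$ explicitly and then exploit the fact that it is supported only inside the transition region of the cut-off $\rho$, where the spike profile $W_0$ is exponentially small. First I would differentiate $W_{\epsilon,\lambda}(x)=\rho(x)W_0(\lambda;x/\epsilon)$ twice by the product rule, insert the result into (\ref{420}), and use the defining equation $W_0''(\lambda;z)=-\tilde{f}(\lambda;W_0(\lambda;z))$ from (\ref{417}) to eliminate the second derivative of $W_0$. This produces
\[
\mathcal{P}_\epsilon(x)=\epsilon\,\rho''(x)\,W_0(\lambda;x/\epsilon)+2\rho'(x)\,W_0'(\lambda;x/\epsilon)+\epsilon^{-1}\Big(\tilde{f}(\lambda;\rho(x)W_0(\lambda;x/\epsilon))-\rho(x)\tilde{f}(\lambda;W_0(\lambda;x/\epsilon))\Big).
\]

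The key structural observation is that $\mathcal{P}_\epsilon$ vanishes outside the transition interval. On $[0,L/3]$ we have $\rho\equiv1$, so $W_{\epsilon,\lambda}(x)=W_0(\lambda;x/\epsilon)$ is an exact rescaled solution of the profile equation and every term above cancels identically. On $[2L/3,L]$ we have $\rho\equiv0$, so $W_{\epsilon,\lambda}\equiv0$ and $\mathcal{P}_\epsilon$ reduces to $\epsilon^{-1}\tilde{f}(\lambda;0)$, which is zero because $\tilde{f}(\lambda;0)=f(\lambda;s_1)=0$; indeed $s_1$ is a root of $g(\lambda;\cdot)$ by Lemma \ref{lemma43}, so $f(\lambda;s_1)=g(\lambda;s_1)(v^*-s_1)/(1+g(\lambda;s_1))=0$. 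Hence $\mathcal{P}_\epsilon$ is supported in $[L/3,2L/3]$.

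On this transition interval $x/\epsilon\geq L/(3\epsilon)$, so Lemma \ref{lemma45} gives $W_0(\lambda;x/\epsilon)$, $|W_0'(\lambda;x/\epsilon)|\leq C_0 e^{-\eta L/(3\epsilon)}$, uniformly in $\lambda$. I would bound the first two terms immediately using $\|\rho'\|_{\infty},\|\rho''\|_{\infty}<\infty$. For the nonlinear term I would use $\tilde{f}(\lambda;0)=0$ together with the smoothness of $\tilde{f}$ on the compact range $[0,W_0(\lambda;0)]$: by the mean value theorem both $|\tilde{f}(\lambda;\rho W_0)|$ and $\rho|\tilde{f}(\lambda;W_0)|$ are at most $(\sup|\tilde{f}_w|)\,\rho\,W_0(\lambda;x/\epsilon)$, so the bracket is $\leq C e^{-\eta L/(3\epsilon)}$. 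Collecting the three estimates yields $\sup_{[0,L]}|\mathcal{P}_\epsilon|\leq C\epsilon^{-1}e^{-\eta L/(3\epsilon)}$.

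The only real subtlety, and the thing that looks like an obstacle, is the prefactor $\epsilon^{-1}$ in (\ref{420}), which would blow up were $\mathcal{P}_\epsilon$ not localized; the resolution is exactly the exact cancellation in the core $[0,L/3]$ combined with the exponential smallness $e^{-\eta L/(3\epsilon)}$ of the profile in the transition layer, which defeats $\epsilon^{-1}$ and forces $\mathcal{P}_\epsilon\to0$ as $\epsilon\to0$, so in particular $\sup_{[0,L]}|\mathcal{P}_\epsilon|\leq C_1$ for all $\epsilon<\epsilon_1$. I would finally remark that all constants can be chosen uniform in $\lambda$ over the range (\ref{413}), since the decay constants $C_0,\eta$ in Lemma \ref{lemma45} are $\lambda$-independent, which is what the later selection of $\lambda=\lambda_\epsilon$ will require.
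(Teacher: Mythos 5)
Your proof is correct and follows essentially the same route as the paper: substitute $W_{\epsilon,\lambda}=\rho(x)W_0(\lambda;x/\epsilon)$ into (\ref{420}), use $W_0''=-\tilde f(\lambda;W_0)$ to expose the cancellation, and then treat the core region ($\rho\equiv1$, exact cancellation), the outer region ($\rho\equiv0$, using $\tilde f(\lambda;0)=0$), and the transition interval, where the exponential decay of $W_0$, $W_0'$ from Lemma \ref{lemma45} beats the $\epsilon^{-1}$ prefactor. If anything, your write-up is more careful than the paper's, which conflates the two flat regions of $\rho$ and leaves the role of $\tilde f(\lambda;0)=0$ and the uniformity in $\lambda$ implicit.
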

\begin{lemma}\label{lemma48}
Suppose that the conditions in Lemma \ref{lemma46} hold.  For each $R_0>0$, denote $\mathcal B(R_0)=\{f\vert \sup_{x\in[0,L]} \vert f \vert<R_0\}$.  Then there exist $C_2=C_2(R_0)>0$ and small $\epsilon_2=\epsilon_2(R_0)>0$ such that for all $\epsilon\in (0,\epsilon_2)$,
\begin{equation}\label{423}
\sup_{x\in[0,L]} \vert\mathcal{Q}_\epsilon [\psi_i]\vert \leq C_2 \epsilon \sup_{x\in[0,L]}\vert \psi_i \vert,~ \forall \psi_i \in \mathcal B(R_0),
\end{equation}
\begin{equation}\label{424}
\sup_{x\in[0,L]} \vert\mathcal{Q}_\epsilon [\psi_1]-\mathcal{Q}_\epsilon [\psi_2]\vert \leq C_2 \epsilon \sup_{x\in[0,L]} \vert \psi_1-\psi_2\vert,~ \forall \psi_1, \psi_2 \in \mathcal B(R_0).
\end{equation}
\end{lemma}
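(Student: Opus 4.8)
The plan is to recognize that $\mathcal{Q}_\epsilon[\psi]$ defined in (\ref{421}) is precisely the second--order Taylor remainder of $\tilde f(\lambda;\cdot)$ expanded about $W_{\epsilon,\lambda}$, so that an extra power of $\epsilon$ can be extracted as soon as $\tilde f_{ww}$ is controlled on the range of arguments that actually arise. First I would write the remainder in integral form: abbreviating $W=W_{\epsilon,\lambda}(x)$,
\[
\tilde f(\lambda;W+\epsilon\psi)-\tilde f(\lambda;W)-\epsilon\psi\,\tilde f_w(\lambda;W)
=\epsilon^2\psi^2\int_0^1(1-t)\,\tilde f_{ww}\big(\lambda;W+t\epsilon\psi\big)\,dt,
\]
whence, by the definition (\ref{421}),
\[
\mathcal{Q}_\epsilon[\psi]=\epsilon\,\psi^2\int_0^1(1-t)\,\tilde f_{ww}\big(\lambda;W+t\epsilon\psi\big)\,dt.
\]
The prefactor $\epsilon$ is exactly what the lemma demands, and everything else reduces to a single uniform bound on $\tilde f_{ww}$.

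Second I would verify that $\tilde f(\lambda;\cdot)$ is $C^2$ with uniformly bounded second derivative on the arguments in play. By Lemma \ref{lemma45} the profile $W_0(\lambda;\cdot)$, hence $W_{\epsilon,\lambda}=\rho\,W_0(\lambda;\cdot/\epsilon)$, takes values in $[0,(v^*-s_1)/2]$ for \emph{every} $\epsilon$, since $\rho\in[0,1]$ and $\max W_0=(v^*-s_1)/2$. Thus for $\psi\in\mathcal B(R_0)$ and $t\in[0,1]$ the argument $W+t\epsilon\psi$ lies in $[-\epsilon R_0,\,(v^*-s_1)/2+\epsilon R_0]$. From (\ref{414}) the only singularities of $\tilde f(\lambda;\cdot)$ arise from the zeros $s_3-s_1<s_4-s_1$ of $1+g(\lambda;\,\cdot+s_1)$, and Lemma \ref{lemma44} gives the strict gap $(v^*-s_1)/2<v^*-s_1<s_3-s_1$; moreover $1+g>0$ to the left of $s_3$, so no singularity sits below. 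Hence there is $\epsilon_2>0$ such that for all $\epsilon\in(0,\epsilon_2)$ this interval stays inside a fixed compact interval $I\subset(-\infty,s_3-s_1)$ on which $\tilde f$ is $C^2$ and $M:=\sup_{I}|\tilde f_{ww}(\lambda;\cdot)|<\infty$. Inserting this into the displayed identity and using $\sup|\psi|<R_0$ yields $\sup_{[0,L]}|\mathcal{Q}_\epsilon[\psi]|\le \tfrac12 MR_0\,\epsilon\,\sup_{[0,L]}|\psi|$, which is (\ref{423}).

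For the Lipschitz bound (\ref{424}) I would subtract the two remainders and apply the fundamental theorem of calculus to $\tilde f(\lambda;W+\epsilon\psi_1)-\tilde f(\lambda;W+\epsilon\psi_2)$, obtaining
\[
\mathcal{Q}_\epsilon[\psi_1]-\mathcal{Q}_\epsilon[\psi_2]
=(\psi_1-\psi_2)\int_0^1\Big(\tilde f_w\big(\lambda;W+\epsilon\psi_2+t\epsilon(\psi_1-\psi_2)\big)-\tilde f_w(\lambda;W)\Big)\,dt.
\]
Since the argument of $\tilde f_w$ differs from $W$ by at most $\epsilon(|\psi_2|+|\psi_1-\psi_2|)\le 3R_0\epsilon$ and the whole segment stays in the convex set $I$, the bracket is bounded by $M$ times that displacement, so the integrand is $O(\epsilon)$ uniformly. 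This gives $\sup_{[0,L]}|\mathcal{Q}_\epsilon[\psi_1]-\mathcal{Q}_\epsilon[\psi_2]|\le C_2\,\epsilon\,\sup_{[0,L]}|\psi_1-\psi_2|$ after enlarging $C_2$ to, say, $3MR_0$ to cover both estimates.

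The main obstacle is the uniformity in $\epsilon$: as $\epsilon\to0$ the spike $W_{\epsilon,\lambda}$ concentrates and its derivatives blow up, so one must ensure the bound on $\tilde f_{ww}$ depends only on the \emph{range} of $W_{\epsilon,\lambda}$ (frozen at $[0,(v^*-s_1)/2]$) and not on its gradient. The delicate point is checking that the perturbed argument $W+t\epsilon\psi$ never reaches the pole $s_3-s_1$ of $\tilde f$; this is secured by the strict inequality $(v^*-s_1)/2<s_3-s_1$ from Lemma \ref{lemma44} together with the smallness of $\epsilon R_0$, and it is precisely this gap that forces the restriction to $\epsilon\in(0,\epsilon_2(R_0))$.
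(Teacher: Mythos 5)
Your proof is correct, and while your estimate for (\ref{423}) coincides with the paper's (both express $\mathcal{Q}_\epsilon[\psi]$ as the integral-form second-order Taylor remainder, so that $\mathcal{Q}_\epsilon[\psi]=\epsilon\psi^2\int_0^1(1-t)\tilde f_{ww}(\lambda;W_{\epsilon,\lambda}+t\epsilon\psi)\,dt$, and bound $\tilde f_{ww}$ uniformly), your treatment of the Lipschitz bound (\ref{424}) is genuinely different and in fact lighter. The paper writes \emph{both} $\mathcal{Q}_\epsilon[\psi_1]$ and $\mathcal{Q}_\epsilon[\psi_2]$ as double-integral remainders in $\tilde f_{ww}$, adds and subtracts the cross term $\int_0^1\int_0^t\tilde f_{ww}(\lambda;W_{\epsilon,\lambda}+s\epsilon\psi_2)\psi_1^2\,ds\,dt$, and therefore needs uniform bounds on both $\tilde f_{ww}$ and $\tilde f_{www}$ (i.e.\ $C^3$ control of $\tilde f$) to close the estimate. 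You instead apply the fundamental theorem of calculus directly to $\tilde f(\lambda;W+\epsilon\psi_1)-\tilde f(\lambda;W+\epsilon\psi_2)$, which reduces the difference $\mathcal{Q}_\epsilon[\psi_1]-\mathcal{Q}_\epsilon[\psi_2]$ to $(\psi_1-\psi_2)$ times a difference of \emph{first} derivatives of $\tilde f$, bounded by $\sup|\tilde f_{ww}|$ alone; this needs only $C^2$ regularity and yields the constant $3MR_0$ cleanly. A second point in your favor: you make explicit why $\tilde f_{ww}$ is bounded on the arguments that actually occur --- the range of $W_{\epsilon,\lambda}$ is frozen at $[0,(v^*-s_1)/2]$ independently of $\epsilon$ by Lemma \ref{lemma45}, the nearest pole of $\tilde f$ sits at $s_3-s_1$, and Lemma \ref{lemma44} provides the strict gap $(v^*-s_1)/2<v^*-s_1<s_3-s_1$, so the perturbed argument $W_{\epsilon,\lambda}+t\epsilon\psi$ stays in a fixed compact interval once $\epsilon R_0$ is small; this is exactly where $\epsilon_2(R_0)$ originates. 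The paper compresses this into the closing remark that $\vert\tilde f_{ww}\vert$, $\vert\tilde f_{www}\vert$ are ``bounded for $\Psi_i\in\mathcal B(R_0)$,'' which is stated loosely (the relevant arguments are $W_{\epsilon,\lambda}+s\epsilon\psi_i$, not $\psi_i$), so your version supplies a justification the paper only gestures at.
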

We want to point out that Lemma \ref{lemma46} generalizes Lemma 5.3 in \cite{WGY} which holds for $L^p$, $p>1$.  Assuming Lemmas \ref{lemma46}--\ref{lemma48}, we prove the following results of positive solutions to (\ref{415}).
\begin{proposition}\label{proposition4}
Let $r\in(0,\infty)$ and $a_2\in(0,1)$.  Suppose that (\ref{413}) is satisfied.  There exists a small $\epsilon_4>0$ such that for all $\epsilon\in (0,\epsilon_4)$, (\ref{415}) has a positive solution $\tilde w_\epsilon(\lambda;x)\in C([0,L])$ such that
\[\sup_{x\in(0,L)} \vert \tilde w_\epsilon(\lambda;x)-W_\epsilon(\lambda;x) \vert \leq C_4  \epsilon,\] where $C_4$ is a positive constant independent of $\epsilon$.  In particular,
\begin{equation}\label{425}
\lim_{\epsilon \rightarrow 0^+} \tilde w_\epsilon(\lambda;x)=
\left\{
\begin{array}{ll}
0, \text{ compact uniformly on } (0,L],    \\
v^*-s_1=\frac{1-a_2}{c_2}-s_1>0,~x=0,
\end{array}
\right.
\end{equation}
where $s_1$ is the positive root of $g(\lambda;s)=0$ obtained in Lemma \ref{lemma43}.
\end{proposition}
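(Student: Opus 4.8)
The plan is to solve the fixed-point reformulation \eqref{418} of \eqref{415} by the contraction mapping principle, using Lemmas \ref{lemma46}--\ref{lemma48} as the three decisive ingredients: bounded invertibility of the linearization, smallness of the residual, and Lipschitz smallness of the nonlinear remainder. Since $\mathcal L_\epsilon$ with domain $C^2_\textbf{n}([0,L])$ is boundedly invertible on $C([0,L])$ by Lemma \ref{lemma46}, the equation $\mathcal L_\epsilon\psi+\mathcal P_\epsilon+\mathcal Q_\epsilon[\psi]=0$ is equivalent to $\psi=\mathcal T_\epsilon[\psi]$, where
\[
\mathcal T_\epsilon[\psi]:=-\mathcal L_\epsilon^{-1}\big(\mathcal P_\epsilon+\mathcal Q_\epsilon[\psi]\big).
\]
Because $\mathcal L_\epsilon^{-1}$ takes values in the domain $C^2_\textbf{n}([0,L])$, any fixed point $\psi_\epsilon$ is automatically $C^2$ and satisfies the homogeneous Neumann conditions, so that $\tilde w_\epsilon:=W_{\epsilon,\lambda}+\epsilon\psi_\epsilon$ is a genuine classical solution of \eqref{415}; thus regularity comes for free from the bootstrap built into $\mathcal L_\epsilon^{-1}$.

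First I would verify that $\mathcal T_\epsilon$ maps a fixed ball into itself. For $\psi\in\mathcal B(R_0)$, combining \eqref{422} with Lemmas \ref{lemma47} and \ref{lemma48} gives
\[
\sup_{[0,L]}|\mathcal T_\epsilon[\psi]|\le C_0\Big(\sup_{[0,L]}|\mathcal P_\epsilon|+\sup_{[0,L]}|\mathcal Q_\epsilon[\psi]|\Big)\le C_0\big(C_1+C_2\epsilon R_0\big).
\]
Choosing $R_0:=2C_0C_1$ and then requiring $2C_0C_2\epsilon\le 1$, the right-hand side is at most $R_0$, so $\mathcal T_\epsilon$ maps the closed ball $\overline{\mathcal B(R_0)}$ into itself. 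For the contraction, given $\psi_1,\psi_2\in\mathcal B(R_0)$, estimates \eqref{422} and \eqref{424} yield
\[
\sup_{[0,L]}|\mathcal T_\epsilon[\psi_1]-\mathcal T_\epsilon[\psi_2]|\le C_0\sup_{[0,L]}|\mathcal Q_\epsilon[\psi_1]-\mathcal Q_\epsilon[\psi_2]|\le C_0C_2\epsilon\sup_{[0,L]}|\psi_1-\psi_2|,
\]
so after further shrinking $\epsilon$ so that $C_0C_2\epsilon<1$, the map is a contraction on the complete space $\overline{\mathcal B(R_0)}$. The Banach fixed-point theorem then produces, for every $\epsilon\in(0,\epsilon_4)$ with $\epsilon_4$ the smallest of the above thresholds, a unique $\psi_\epsilon\in\mathcal B(R_0)$ solving \eqref{418}.

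With existence secured, the quantitative bound is immediate: since $R_0$ is independent of $\epsilon$,
\[
\sup_{(0,L)}|\tilde w_\epsilon(\lambda;x)-W_{\epsilon,\lambda}(x)|=\epsilon\sup_{[0,L]}|\psi_\epsilon|\le R_0\epsilon=:C_4\epsilon.
\]
The limiting profile \eqref{425} then follows from this estimate together with the decay of $W_0$ from Lemma \ref{lemma45}: for each fixed $x\in(0,L]$ one has $0\le W_{\epsilon,\lambda}(x)=\rho(x)W_0(\lambda;x/\epsilon)\le C_0e^{-\eta x/\epsilon}\to0$ compact uniformly, so $\tilde w_\epsilon(\lambda;x)\to0$ there, while at $x=0$ the spike value is carried by $W_{\epsilon,\lambda}(0)=\rho(0)W_0(\lambda;0)$ and the $O(\epsilon)$ correction drops out in the limit.

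The step I expect to be the genuine obstacle is the positivity of $\tilde w_\epsilon$, since the contraction only controls it in sup-norm and the leading term $W_{\epsilon,\lambda}=\rho\,W_0(\cdot/\epsilon)$ vanishes identically on $\{\rho=0\}$, where the sign of $\tilde w_\epsilon$ is governed by the uncontrolled $O(\epsilon)$ remainder. To overcome this I would run a maximum-principle argument exploiting the structure of $\tilde f$ recorded in the proof of Lemma \ref{lemma45}: one has $\tilde f(\lambda;0)=f(\lambda;s_1)=0$ and $\tilde f_w(\lambda;0)<0$, and an inspection of \eqref{414} together with Lemma \ref{lemma43} and Lemma \ref{lemma44} shows $\tilde f(\lambda;s)<0$ for $s\in(0,v^*-s_1)$ and $\tilde f(\lambda;s)>0$ for small $s<0$. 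The bound above gives $\tilde w_\epsilon\ge W_{\epsilon,\lambda}-C_4\epsilon\ge -C_4\epsilon$, so any negative values are $O(\epsilon)$ small and hence lie in the regime where $\tilde f>0$. If $\tilde w_\epsilon$ attained a negative interior minimum at $x_0$, then $\tilde w_\epsilon''(x_0)\ge0$ forces $\tilde f(\lambda;\tilde w_\epsilon(x_0))=-\epsilon^2\tilde w_\epsilon''(x_0)\le0$, contradicting $\tilde f>0$ for small negative argument; the Neumann boundary conditions exclude a negative boundary minimum via Hopf's lemma, and the strong maximum principle upgrades $\tilde w_\epsilon\ge0$ to strict positivity. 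This completes the construction of the positive solution asserted in the proposition.
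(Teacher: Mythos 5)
Your proposal is correct and follows essentially the same route as the paper: the paper's proof defines exactly the same map $\mathcal{S}_\epsilon[\psi]=-\mathcal{L}^{-1}_\epsilon(\mathcal{P}_\epsilon+\mathcal{Q}_\epsilon[\psi])$ in (\ref{426}), chooses $R_0\geq 2C_0C_1$, verifies the self-map and contraction properties from Lemmas \ref{lemma46}--\ref{lemma48} with the same smallness conditions on $\epsilon$, and invokes the Banach fixed point theorem. You go beyond the paper in two places where it is silent: the paper ends with ``it is easy to show that $\tilde w_\epsilon$ satisfies (\ref{425})'' and never addresses positivity at all, whereas you derive the $C_4\epsilon$ bound explicitly and give a maximum-principle argument for $\tilde w_\epsilon>0$ (using $\tilde f(\lambda;s)>0$ for small $s<0$, strict concavity of $\tilde w_\epsilon$ on the set where it is negative, and the Neumann conditions); that argument is sound and fills a real gap in the published proof.

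One caveat you should be aware of: your argument at $x=0$ yields $\lim_{\epsilon\to 0^+}\tilde w_\epsilon(\lambda;0)=W_0(\lambda;0)$, and by the normalization in Lemma \ref{lemma45} (problem (\ref{417})) this equals $(v^*-s_1)/2$, not the value $v^*-s_1$ displayed in (\ref{425}). This discrepancy is internal to the paper rather than an error of yours: (\ref{425}) as printed is inconsistent with Lemma \ref{lemma45}, with Proposition \ref{proposition3} (which gives $\frac{1-a_2}{2c_2}$ at $x=0$), and with Corollary \ref{corollary1} (which gives $v_\epsilon(\lambda;0)\to (v^*-s_1)/2$, forcing $\tilde w_\epsilon(\lambda;0)\to (v^*-s_1)/2$). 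So your proof establishes what the construction actually yields --- the corrected statement with $(v^*-s_1)/2$ at the origin --- and it would strengthen your write-up to say so explicitly rather than leave the boundary value implicit in $W_0(\lambda;0)$.
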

\begin{proof}
We shall apply the Fixed point theorem on (\ref{418}) to show that $\tilde w_\epsilon \in C([0,L])$ takes the form of $\tilde w_\epsilon=W_{\epsilon,\lambda}+\epsilon \psi$ for a smooth function $\psi$.  We define
\begin{equation}\label{426}
\mathcal{S}_\epsilon[\psi]=-\mathcal{L}^{-1}_\epsilon(\mathcal{P}_\epsilon+\mathcal{Q}[\psi]).
\end{equation}
Then $\mathcal{S}_\epsilon$ is a bounded linear operator from $C([0,L])$ to $C([0,L])$ uniform in $\lambda$.  Moreover, choosing $R_0\geq 2C_0C_1$, we have that $\sup_{x\in(0,L)} \vert \mathcal{L}^{-1} \mathcal{P}_\epsilon \vert \leq C_0C_1$ thanks to Lemma \ref{lemma46} and \ref{lemma47}.  Therefore, it follows from Lemma \ref{lemma48} that if $\epsilon$ is small,
\[\sup_{x\in(0,L)} \vert \mathcal{S}_\epsilon[\psi]\vert \leq C_0C_1+C_0C_2\epsilon R_0\leq R_0,~\forall \psi\in \mathcal{B}(R_0).\]
and
\[\sup_{x\in(0,L)} \vert \mathcal{S}_\epsilon[\psi_1]-\mathcal{S}_\epsilon[\psi_2]\vert \leq \frac{1}{2} \sup_{x\in(0,L)} \vert \psi_1-\psi_2 \vert,~\forall \psi_1,\psi_2 \in \mathcal{B},\]
hence $\mathcal{S}_\epsilon$ is a contraction mapping on $\mathcal{B}(R_0)$ for small positive $\epsilon$.  We conclude from the Banach Fixed Point Theorem that $\mathcal{S}_\epsilon$ has a fixed point $\psi_\epsilon$ in $\mathcal{B}$, which is a smooth solution of (\ref{415}).  It is easy to show that $\tilde w_\epsilon$ satisfies (\ref{425}) and this finishes the proof of Proposition \ref{proposition4}.
\end{proof}
Thanks to (\ref{413})--(\ref{415}), $v_\epsilon(x)=v^*-s_1-\tilde w_\epsilon(x)$ is a smooth solution of (\ref{411}).  We have the following results.
\begin{corollary}\label{corollary1}
Under the same condition as in Proposition \ref{proposition4}.  There exists $\epsilon_4>0$ such that for all $\epsilon\in (0,\epsilon_4(\delta))$, (\ref{46}) has a positive solution $\tilde v_\epsilon(\lambda;x)\in C([0,L])$ satisfying
\begin{equation}\label{427}
\lim_{\epsilon \rightarrow 0^+} \tilde v_\epsilon(\lambda;x)=
\left\{
\begin{array}{ll}
v^*-s_1=\frac{1-a_2}{c_2}-s_1, \text{ compact uniformly on } (0,L],    \\
(v^*-s_1)/2,~x=0,
\end{array}
\right.
\end{equation}
\end{corollary}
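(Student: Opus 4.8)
The statement is nothing more than Proposition \ref{proposition4} read backwards through the chain of substitutions that produced (\ref{415}) from (\ref{46}). Recall that $w_\epsilon=v^*-v_\epsilon$ by (\ref{49}) and $\tilde w_\epsilon=w_\epsilon-s_1$, so that $v_\epsilon=v^*-s_1-\tilde w_\epsilon$. Since (\ref{414})--(\ref{415}) were arranged precisely so that $\tilde w$ solves (\ref{415}) if and only if $v^*-s_1-\tilde w$ solves (\ref{46}), the first step is simply to record that the function $\tilde w_\epsilon(\lambda;x)$ furnished by Proposition \ref{proposition4} yields $v_\epsilon(\lambda;x)=v^*-s_1-\tilde w_\epsilon(\lambda;x)$ as a solution of (\ref{46}) (this is the $\tilde v_\epsilon$ of the statement); its $C^2$-smoothness is inherited from that of $\tilde w_\epsilon$ via the standard elliptic regularity already invoked for (\ref{46}).

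The only genuine verification is positivity of $v_\epsilon$ on $(0,L)$. Here the plan is to invoke Lemma \ref{lemma44}, which gives $s_1<v^*$ and hence $v^*-s_1>0$, together with the approximation bound $\sup_x|\tilde w_\epsilon-W_{\epsilon,\lambda}|\le C_4\epsilon$ of Proposition \ref{proposition4}. Because the relation $W_0'(\lambda;z)\,z<0$ in Lemma \ref{lemma45} makes $W_0$ strictly decreasing away from its peak at the origin, one has $W_{\epsilon,\lambda}(x)=\rho(x)W_0(\lambda;x/\epsilon)\le W_0(\lambda;0)=(v^*-s_1)/2$ for all $x\in[0,L]$, so that $\tilde w_\epsilon\le (v^*-s_1)/2+C_4\epsilon<v^*-s_1$ once $\epsilon$ is small. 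This forces $v_\epsilon=v^*-s_1-\tilde w_\epsilon>0$ throughout $(0,L)$.

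Finally I would transfer the limiting profile (\ref{425}) to $v_\epsilon$. On any compact subset of $(0,L]$ Proposition \ref{proposition4} gives $\tilde w_\epsilon\to 0$, whence $v_\epsilon=v^*-s_1-\tilde w_\epsilon\to v^*-s_1$ uniformly there, which is the first alternative in (\ref{427}). At the boundary point $x=0$ one has $W_{\epsilon,\lambda}(0)=W_0(\lambda;0)=(v^*-s_1)/2$ by Lemma \ref{lemma45}, so $\tilde w_\epsilon(0)\to (v^*-s_1)/2$ and therefore $v_\epsilon(0)\to v^*-s_1-(v^*-s_1)/2=(v^*-s_1)/2$, giving the second alternative. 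The only point that requires care is this boundary value: it is exactly half the interior limit because the spike $W_0$ is centered at the boundary point $x=0$ and carries peak height $(v^*-s_1)/2$. Beyond this bookkeeping of the substitutions there is no real obstacle, which is why the result is stated as a corollary rather than a proposition.
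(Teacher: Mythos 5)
Your proposal is correct, and it follows the route the paper itself intends: the paper gives Corollary \ref{corollary1} no separate proof beyond the one--line remark preceding it, namely that $v_\epsilon=v^*-s_1-\tilde w_\epsilon$ converts the solution of (\ref{415}) produced by Proposition \ref{proposition4} into a solution of (\ref{46}), with the limits then read off from (\ref{425}). What you add is genuinely needed, however, and this is worth spelling out. If one transfers (\ref{425}) literally, its second line $\tilde w_\epsilon(0)\to v^*-s_1$ would give $v_\epsilon(0)\to 0$, contradicting the value $(v^*-s_1)/2$ asserted in (\ref{427}); so the corollary cannot be obtained by quoting (\ref{425}) wholesale. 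Your derivation of the boundary value avoids this: since $\tilde w_\epsilon=W_{\epsilon,\lambda}+\epsilon\psi$ with $\psi$ bounded, and $W_{\epsilon,\lambda}(0)=\rho(0)W_0(\lambda;0)=(v^*-s_1)/2$ by the normalization in Lemma \ref{lemma45}, the approximation bound of Proposition \ref{proposition4} forces $\tilde w_\epsilon(0)\to (v^*-s_1)/2$ and hence $v_\epsilon(0)\to (v^*-s_1)/2$, which is the value consistent with (\ref{427}), with Proposition \ref{proposition3} and with Theorem \ref{theorem41}; it also shows that the second line of (\ref{425}) should itself read $(v^*-s_1)/2$. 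Likewise your positivity argument, $W_{\epsilon,\lambda}\le W_0(\lambda;0)=(v^*-s_1)/2$ because $W_0'(\lambda;z)z<0$, so that $\tilde w_\epsilon\le (v^*-s_1)/2+C_4\epsilon<v^*-s_1$ for small $\epsilon$ and $v_\epsilon>0$, is a verification the paper leaves entirely implicit, and it is sound. One caveat you inherit from the paper rather than create: both arguments lean on the prescribed peak height $W_0(\lambda;0)=(v^*-s_1)/2$ in (\ref{417}), which sits uneasily with the paper's own proof of Lemma \ref{lemma45}, where the first integral of $W_0''+\tilde f(\lambda;W_0)=0$ forces the peak of the ground state to be the zero $s_0\in(v^*-s_1,s_3-s_1)$ of $\tilde F(\lambda;\cdot)$; taking Lemma \ref{lemma45} as stated, though, your proof of the corollary is complete.
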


Now we prove Lemmas \ref{lemma46}-\ref{lemma48}.
\begin{proof}[Proof\nopunct]\emph{of Lemma} \ref{lemma46}.
We argue by contradiction.  Choose a positive sequence $\{\epsilon_i\}_{i=1}^\infty$ with $\epsilon_i \searrow 0$ as $i\rightarrow \infty$.  Suppose that there exist $\Psi_i(x) \in C^2([0,L])$ and $h_i(x)\in C([0,L])$ satisfying
\begin{equation}\label{428}
\left\{
\begin{array}{ll}
\mathcal L_{\epsilon_i} \Psi_i=h_i, & x \in(0,L),\\
\Psi_i'(0)=\Psi'_i(L)=0,
\end{array}
\right.
\end{equation}
such that $\sup_{x\in[0,L]} \vert\Psi_i(x)\vert=1$ and $\sup_{x\in[0,L]} \vert h(x) \vert\rightarrow 0$ as $i\rightarrow \infty$.  Define
\[\tilde \Psi_i(z)=\Psi_i(\epsilon x),\tilde h_i(z)=h_i(\epsilon_ix), \text{ and } \tilde W_{\epsilon_i,\lambda}(z)=W_{\epsilon_i,\lambda}(\epsilon_i x),\]
then
\[\frac{d^2 \tilde \Psi_i }{dz^2}+\tilde f_w (\lambda;\tilde W_{\epsilon_i,\lambda}(z)) \tilde\Psi_i=\tilde h_i, z\in\Big(0,\frac{L}{\epsilon_i}\Big).\]
Let $R_0<\frac{L}{\epsilon_i}$ be an arbitrarily chosen but fixed constant.  Without loss of our generality, we assume $\sup_{[0,L]} \Psi_i(x)=1$.  Then we infer from the boundedness of $\{\tilde h_i(z)\}_{i=1}^\infty$ and the elliptic Schauder estimate that $\{\tilde \Psi_i(z)\}_{i=1}^\infty$ is bounded in $C^2([0,R_0])$, therefore there exists a subsequence as $i\rightarrow \infty$ such that $\tilde \Psi_i(z) \rightarrow \tilde \Psi_0(z)$ in $C^{1,\theta}([0,R_0])$ thanks to the compact embedding $C^2([0,R_0]) \subset C^{1,\theta}([0,R_0])$, $\theta \in(0,1)$.  On the other hand, since $\tilde f_w(\lambda; \tilde W_{\epsilon_i,\lambda}(z))\rightarrow \tilde f_w(\lambda;W_0(\lambda;z))$ and $\tilde h_i(z) \rightarrow 0$ uniformly on $[0,R_0]$, $\frac{d^2 \tilde \Psi_i}{dz^2}$ converges in $C([0,R_0])$ hence $\tilde \Psi_i\rightarrow \tilde \Psi_0$ in $C^2([0,R_0])$.  Applying standard diagonal argument and elliptic regularity theory, we can show that $\tilde \Psi_0$ is in $C^\infty(0,\infty)$ and it satisfies
\begin{equation}\label{429}
\left\{
\begin{array}{ll}
\frac{d^2 \tilde \Psi_0}{dz^2}+\tilde f_w (\lambda;W_0(\lambda;z))\tilde \Psi_0=0, & z \in(0,\infty),\\
\tilde \Psi_0'(0)=0,
\end{array}
\right.
\end{equation}
where $W_0$ is the unique solution to (\ref{417}).

Now we show that $\tilde \Psi_0(0)=1$.  To this end, let $Q_i \in[0,L]$ be such that $\tilde \Psi_i(Q_i)=1$.  We claim that $Q_i\leq C_0 \epsilon$ for some bounded $C_0>0$ independent of $\epsilon$.  In order to prove this claim, we argue by contradiction and assume that $\frac{Q_i}{\epsilon} \rightarrow \infty$ as $i\rightarrow \infty$.  Define
\[\hat \Psi_i(z)=\Psi_i(\epsilon z+Q_i) \text{~and~}\hat h_i(z)=h_i(\epsilon_iz+Q_i)\]
for $\epsilon_i z+Q_i\in[0,L]$ or $z\in[-\frac{Q_i}{\epsilon_i}, \frac{L-Q_i}{\epsilon_i}]$,  therefore
\begin{equation}\label{430}
\left\{
\begin{array}{ll}
\frac{d^2 \hat \Psi_i }{dz^2}+\tilde f_w (\lambda;\hat W_{\epsilon_i,\lambda}(z)) \hat \Psi_i=\hat h_i,& z\in[-\frac{Q_i}{\epsilon_i}, \frac{L-Q_i}{\epsilon_i}],\\
\hat \Psi_i(0)=\sup_{z\in[-\frac{Q_i}{\epsilon_i}, \frac{L-Q_i}{\epsilon_i}]} \hat \Psi_i(z)=1, \frac{d\hat \Psi_i(0)}{dz}=0.
\end{array}
\right.
\end{equation}
By the same arguments as above, we can show that $\hat \Psi_i$ converges to some $\hat \Psi_0$ (at least) in $C^2((-\infty,0])$ and $\hat \Psi_0$ is in $C^\infty((-\infty,0])$; moreover, since $\frac{Q_i}{\epsilon_i}\rightarrow \infty$, we have from the exponential decaying property of $W_0$ that $\hat W_{\epsilon_i,\lambda}(z)=\rho(\epsilon_iz+Q_i)W_0(\lambda;\frac{\epsilon_iz+Q_i}{\epsilon}) \rightarrow 0$ and
\[\frac{d^2 \hat \Psi_0 }{dz^2}+\tilde f_w (\lambda;0) \hat \Psi_0=0, \frac{d\hat \Psi_0(0)}{dz}=0, \sup_{z\in(-\infty,0)}\hat \Psi_0(0)=1,\]
then we have from Maximum Principle that $\hat \Psi''_0(0)=-\tilde f_w (\lambda;0) \hat \Psi_0(0)=-\tilde f_w (\lambda;0)\leq0$, however, this is a contradiction to the fact that $\tilde f_w(\lambda;0)<0$.  This proves our claim and we must have that $\tilde \Psi_0(0)=\sup_{z\in[0,\infty)} \tilde \Psi_0(z)=1$ in (\ref{429}).

Differentiate (\ref{417}) with respect to $z$ and we have that
\begin{equation}\label{431}
\left\{
\begin{array}{ll}
\frac{d^3 W_0 }{dz^3}+\tilde f_w (\lambda;  W_0 )\frac{d W_0 }{dz} =0, z\in(-\infty, \infty),\\
\frac{dW_0(0)}{dz}=0, W_0(0)=\sup_{z\in(-\infty,\infty)}W_0(z)>0,
\end{array}
\right.
\end{equation}
Multiplying (\ref{429}) by $\frac{d W_0 }{dz}$ and integrating it over $(0,\infty)$ lead us to
\begin{equation}\label{432}
\int_0^\infty \frac{d^2 \tilde\Psi_0}{dz^2} \frac{d W_0 }{dz}+\tilde f_w (\lambda;  W_0 ) \tilde \Psi_0 \frac{d W_0 }{dz}=0.
\end{equation}
Multiplying (\ref{433}) by $\tilde \Psi_0$ and integrating it over $(0,\infty)$ lead us to
\begin{equation}\label{433}
\int_0^\infty \frac{d^3 W_0 }{dz^3}\tilde\Psi_0+\tilde f_w (\lambda;  W_0) \frac{d W_0 }{dz} \tilde \Psi_0 =0.
\end{equation}
We infer from (\ref{432}) and (\ref{433}) and the integrations by parts that
\[
\begin{split}
0 &= \int_0^\infty  \tilde\Psi_0''W_0'-\tilde\Psi_0W_0'''\\
&= \int_0^\infty (\tilde\Psi_0'W_0')'-\tilde\Psi_0'W_0''-\Big((\tilde\Psi_0W_0'')'-\tilde\Psi_0'W_0''\Big)\\
&= \int_0^\infty (\tilde\Psi_0'W_0')'-(\tilde\Psi_0W_0'')'\\
&=\tilde\Psi_0'W_0'\Big\vert_0^\infty-\tilde\Psi_0W_0''\Big\vert_0^\infty,
\end{split}
\]
where $'$ denotes the derivative against $z$.  In light of the exponential decay of $W'_0$, $W''_0$ at infinity and the fact $W''_0(0)\neq0$, we have that $\tilde \Psi_0(0)=0$,  and this is a contradiction.  The proof of this lemma is finished.
\end{proof}

\begin{proof}[Proof\nopunct]\emph{of Lemma} \ref{lemma47}.
Substituting $W_{\epsilon,\lambda}(x)=\rho(x)W_0(\lambda;\frac{x}{\epsilon})$ into (\ref{420}) gives rise to
\begin{equation}\label{434}
\begin{split}
\mathcal P_\epsilon=&\epsilon^{-1}\Big(\epsilon^2\rho''(x)W_0(\lambda;x/\epsilon)+2\epsilon \rho'(x)(W_0)_x(\lambda;x/\epsilon)\\
&+\rho(x)(W_0)_{xx}(\lambda;x/\epsilon)+\tilde f(\lambda;\rho(x)W_0(\lambda;x/\epsilon))\Big) \\
=&\epsilon^{-1}\Big(\epsilon^2\rho''(x)W_0(\lambda;x/\epsilon)+2\epsilon \rho'(x)(W_0)_x(\lambda;x/\epsilon)\\
&-\rho(x)\tilde f(\lambda;W_0(\lambda;x/\epsilon))+\tilde f(\lambda;\rho(x)W_0(\lambda;x/\epsilon))\Big).
\end{split}
\end{equation}
If $x\in[0,\frac{L}{3}]\cup[\frac{2L}{3},L]$, $\rho(x)\equiv1$ hence $\mathcal P_\epsilon$ is bounded for all $\lambda$.  If $x\in (\frac{L}{3},\frac{2L}{3})$, since $W_0$ decays exponentially at $\infty$, we can also see that $\mathcal P_\epsilon$ is bounded.  Therefore $\sup_{x\in[0,L]}\vert \mathcal P_\epsilon\vert$ is bounded.
\end{proof}

\begin{proof}[Proof\nopunct]\emph{of Lemma} \ref{lemma48}.
We have from the Intermediate Value Theorem that
\begin{equation}\label{435}
\begin{split}
\sup_{x\in[0,L]} \vert \mathcal{Q}_{\epsilon}[\psi](x)\vert &=\epsilon^{-1}\sup_{x\in[0,L]} \Big[\tilde{f}(\lambda,W_{\epsilon,\lambda}+\epsilon\psi)-\tilde{f}(\lambda, W_{\epsilon,\lambda})-\epsilon\psi\tilde{f}_w(\lambda,W_{\epsilon,\lambda})\Big]\\
&=\epsilon^{-1}\sup_{x\in[0,L]} \Big\vert\int_0^1\Big[\tilde{f}_w(\lambda,W_{\epsilon,\lambda}+t\epsilon\psi)-\tilde{f}_w(\lambda,W_{\epsilon,\lambda})\Big](\epsilon\vert\psi\vert)dt\Big\vert\\
&=\epsilon\sup_{x\in[0,L]} \vert\psi^2\vert\int_0^1\int_0^t \sup_{x\in[0,L]}  \Big\vert\tilde{f}_{ww}(\lambda,W_{\epsilon,\lambda}+s\epsilon\psi)\Big\vert dsdt \\
&\leq C\epsilon\sup_{x\in[0,L]}  \vert\psi\vert^2,
\end{split}
\end{equation}
and
\begin{equation}\label{436}
\begin{split}
&\sup_{x\in[0,L]} \vert \mathcal Q_{\epsilon}[\psi_1]-\mathcal Q_{\epsilon}[\psi_2]\vert\\
=&\epsilon\sup_{x\in[0,L]} \Big \vert\int_0^1\int_0^t\tilde{f}_{ww}(\lambda, W_{\epsilon,\lambda}+s\epsilon\psi_1)\psi_1^2-\tilde{f}_{ww}(\lambda, W_{\epsilon,\lambda}+s\epsilon\psi_2)\psi_2^2dsdt\Big \vert\\
=&\epsilon\sup_{x\in[0,L]} \Big\vert\int_0^1\int_0^t\tilde{f}_{ww}(\lambda, W_{\epsilon,\lambda}+s\epsilon\psi_1)\psi_1^2-\int_0^1\int_0^t\tilde{f}_{ww}(\lambda, W_{\epsilon,\lambda}+s\epsilon\psi_2)\psi_1^2\\
&+\int_0^1\int_0^t\tilde{f}_{ww}(\lambda, W_{\epsilon,\lambda}+s\epsilon\psi_2)\psi_1^2-\int_0^1\int_0^t\tilde{f}_{ww}(\lambda, W_{\epsilon,\lambda}+s\epsilon\psi_2)\psi_2^2dsdt\Big \vert\\
=&\epsilon\sup_{x\in[0,L]} \Big\vert\int_0^1\int_0^t\tilde{f}_{ww}(\lambda, W_{\epsilon,\lambda}+s\epsilon\psi_1)-\tilde{f}_{ww}(\lambda, W_{\epsilon,\lambda}+s\epsilon\psi_2)dsdt\Big\vert\vert\psi_1\vert^2\\
&+\epsilon\sup_{x\in[0,L]} \Big \vert\int_0^1\int_0^t\tilde{f}_{ww}(\lambda, W_{\epsilon,\lambda}+s\epsilon\psi_2)dsdt\Big\vert\big\vert\psi_1^2-\psi_2^2\big \vert\\
\leq & \epsilon C_0\sup_{x\in[0,L]} \vert \psi_1\vert^2 \sup_{x\in[0,L]} \vert\psi_1-\psi_2\vert+\epsilon C_1\sup_{x\in[0,L]} \vert\psi_1+\psi_2\vert\sup_{x\in[0,L]} \vert\psi_1-\psi_2\vert\\
\leq &(C_0R^2+2C_1R)\epsilon\sup_{x\in[0,L]} \vert\psi_1-\psi_2\vert\\
\leq &C_2\epsilon \sup_{x\in[0,L]} \vert\psi_1-\psi_2\vert,
\end{split}
\end{equation}
where we have used the fact that $\vert \tilde f_{ww}(\Psi_i)\vert $, $\vert \tilde f_{www}(\Psi_i)\vert $ are bounded for $\Psi_i\in \mathcal B(R_0)$.
\end{proof}

We proceed to prove Proposition \ref{proposition3}.  Therefore we need to find $\lambda=\lambda_\epsilon$ satisfying (\ref{47}) and (\ref{413}).  In particular, we shall show that $\lambda_\epsilon\rightarrow 0$ as $\epsilon \rightarrow 0$.  We define
\begin{equation}\label{437}
G(\epsilon,\lambda)=\int_0^L \Big(-1+\frac{1}{a_1+b_1\lambda e^{-rv_\epsilon}+c_1v_\epsilon}\Big)\lambda e^{-rv_\epsilon} dx,
\end{equation}
over $(\epsilon,\lambda)\in(-\delta,\delta)\times (-\delta,\delta)$, where $\delta>0$ be a small constant.  We put $v_\epsilon(\lambda;x)\equiv v^*=\frac{1-a_2}{c_2}$ if $\epsilon<0$ or $\lambda<0$.
\begin{proof}[Proof\nopunct]\emph{of Proposition} \ref{proposition3}.
It is easy to see that
\[G(0,0)=\lim_{\epsilon,\lambda\rightarrow 0}G(\epsilon,\lambda)=\int_0^L \Big(-1+\frac{1}{a_1+b_1\lambda e^{-rv_\epsilon}+c_1v_\epsilon}\Big)\lambda e^{-rv_\epsilon}=0.\]
For $\epsilon, \lambda>0$, we have that
\begin{equation}\label{438}
\begin{split}
\frac{\partial G(\epsilon,\lambda)}{\partial \lambda}=&\int_0^L\Big(r-\frac{(a_1+c_1v_\epsilon(\lambda;x))r+c_1}{(a_1+b_1\lambda e^{-rv_\epsilon(\lambda;x)}+c_1v_\epsilon(\lambda;x))^2 }\Big)\lambda e^{-rv_\epsilon(\lambda;x)}\frac{\partial v_\epsilon(\lambda;x)}{\partial \lambda} dx \\
&+\int_0^L\Big(\frac{a_1+c_1v_\epsilon(\lambda;x)}{(a_1+b_1\lambda e^{-rv_\epsilon(\lambda;x)}+c_1v_\epsilon(\lambda;x))^2 }-1\Big)e^{-rv_\epsilon(\lambda;x)}dx,
\end{split}
\end{equation}
where
\[
\frac{\partial v_\epsilon(\lambda;x)}{\partial \lambda}=-\frac{\partial \tilde w_\epsilon(\lambda;x)}{\partial \lambda}-\frac{\partial s_1}{\partial \lambda}=-\frac{\partial \tilde w_\epsilon(\lambda;x)}{\partial \lambda}-\frac{b_2e^{-r(v^*-s_1)}}{c_2}
\]
since $v_\epsilon=v^*-\tilde w-s_1$.  We claim that $\sup_{x\in[0,L]}\vert\frac{\partial w_\epsilon(\lambda;x)}{\partial \lambda}\vert$ is uniformly bounded in $\epsilon$ and $\lambda$.  According to (\ref{415}), we have
\[\epsilon^2 \frac{d^2 \frac{\partial \tilde w_\epsilon}{\partial \lambda}}{dx^2}+\tilde f_w(\lambda,\tilde w_\epsilon(\lambda;x))\frac{\partial \tilde  w_\epsilon}{\partial \lambda} +\tilde f_\lambda (\lambda; \tilde w_{\epsilon}(\lambda;x))=0,\]
where
\[\tilde f_\lambda=-\frac{b_2e^{r(\tilde{w}_\epsilon(\lambda;x)-v^*)}(v^*-\tilde{w}_\epsilon(\lambda;x))}{(1+b_2\lambda e^{r(\tilde{w}_\epsilon(\lambda;x)-v^*)}-c_2\tilde{w}_\epsilon(\lambda;x))^2},\]
and it is uniformly bounded in $\epsilon$ and $\lambda$.  Write $\tilde {\mathcal L}_\epsilon=\epsilon^2 \frac{d^2}{dx^2}+\tilde f_w(\lambda; \tilde w_\epsilon(\lambda;x))$.  By the standard perturbation arguments, we can show that $\tilde {\mathcal L}_\epsilon$ has an inverse $\tilde {\mathcal L}_\epsilon^{-1}$: $C([0,L])\rightarrow C([0,L])$ which is uniformly bounded in $\epsilon$.  Therefore $\frac{\partial \tilde w_\epsilon}{\partial \lambda}$ is bounded in $C([0,L])$.  Now we conclude from the Lebesgue Dominated Convergence Theorem $\frac{\partial G(\epsilon,\lambda)}{\partial \lambda}$ is continuous around $(\epsilon,\lambda)=(0,0)$; moreover, since $v^*=\frac{1-a_2}{c_2}\neq \frac{1-a_1}{c_1}$, we have
\begin{equation}\label{439}
\frac{\partial G(0,0)}{\partial \lambda}=\int_0^L\Big(\frac{a_1+c_1v^*}{(a_1+c_1v^*)^2 }-1\Big)e^{-rv^*}dx\neq 0,
\end{equation}
then Proposition \ref{proposition4} follows from the Implicit Function Theorem.
\end{proof}

\subsection{Boundary spike and boundary layer to the full system}
Now we prove Theorem \ref{theorem41} by showing that the solutions to (\ref{41}) perturb from its shadow system (\ref{46}) when $D_1$ is sufficiently large.  First of all, we let $s=\frac{1}{D_1}$, $\psi=v$, $\phi=ue^{rv}-\tau$, and $u=(\phi+\tau)e^{-rv}$, where $\tau$ is a constant.  Since
$\frac{\partial (\phi,\psi)}{\partial (u,v)}=
\begin{pmatrix}
e^{rv}& rue^{rv}\\
0& 1
\end{pmatrix}$
is invertible, (\ref{41}) is equivalent as $F(s,\phi,\tau,\psi)=0$, where
\begin{equation}\label{440}
F(s,\phi,\tau,\psi)=
\begin{pmatrix} \phi_{xx}-r\phi_x\psi_x+sP\{(-1+\frac{1}{a_1+b_1(\phi+\tau)e^{-r\psi}+c_1\psi})(\phi+\tau)\}\\
\epsilon^2\psi_{xx}+(-1+\frac{1}{a_2+b_2(\phi+\tau)e^{-r\psi}+c_2\psi})\psi\\
\int_0^L (-1+\frac{1}{a_1+b_1(\phi+\tau)e^{-r\psi}+c_1\psi})(\phi+\tau)e^{-r\psi}dx\end{pmatrix}
\end{equation}
We decompose $C([0,L])$ into $Y_0 \oplus Y_1$, where $Y_0=\{y\in C([0,L]) \vert \int_0^L udx=0\}$.  Define the projection operator $P: C^2([0,L]) \rightarrow Y_0$ by
\[P(u)=u-\frac{1}{L} \int_0^L u(x)dx.\]
\begin{proof}[Proof\nopunct]\emph{of Theorem} \ref{theorem41}.
Let $(\lambda_\epsilon,v_\epsilon(\lambda;x))$ be a solution of (\ref{46}), then we see that $F(0,0,\lambda_\epsilon, v_\epsilon(x))=0$.  Moreover, $F(s,\phi,\tau,\psi)$ is an analytic from $\mathbb R^+ \times C^2([0,L]) \times \mathbb R \times C^2([0,L])$ to $Y_0 \times C([0,L]) \times \mathbb R$ and its Frech\'et derivative with respect to $(\phi, \tau, \psi)$ at $(s,\phi,\tau,\psi)=(0,0,\lambda_\epsilon,v_\epsilon)$ is given by
\begin{equation}\label{441}
D_{(\phi, \tau, \psi)}F\vert_{(0,0,\lambda_\epsilon,v_\epsilon)}=\begin{pmatrix}
\frac{d^2}{dx^2}-rv_\epsilon^{'}\frac{d}{dx}&0&0\\
\tilde f_\lambda(\lambda_\epsilon;\tilde w_\epsilon) & \tilde f_\lambda(\lambda_\epsilon;\tilde w_\epsilon) &\epsilon^2\frac{d^2}{dx^2}+\tilde f_w(\lambda_\epsilon;\tilde w_\epsilon)\\
E_1& E_1&E_2
\end{pmatrix}
\end{equation}
where
\[E_1=\int_0^L \Big(\frac{a_1+c_1v_\epsilon}{(a_1+b_1\lambda_\epsilon e^{-r v_\epsilon }+c_1v_\epsilon)^2}-1 \Big)e^{-rv_\epsilon} \]
and
\[E_2=\int_0^L \Big(\frac{b_1r\lambda_\epsilon e^{-r v_\epsilon }-c_1}{(a_1+b_1\lambda_\epsilon e^{-r v_\epsilon }+c_1v_\epsilon)^2}+r\Big)\lambda_\epsilon e^{-rv_\epsilon}\]
The operator $\frac{d^2}{dx^2}-rv'_\epsilon(\lambda;x)\frac{d}{dx}$ is an isomorphism from $C^2([0,L])\cap Y_0$ onto $Y_0$.  Indeed, choosing $u$ in the kernel of this operator and writing $u=\sum_{k=0}^\infty S_k \cos \frac{k\pi x}{L}$, we have that $-S_k\frac{k \pi}{L}\cos \frac{k\pi x}{L}+r S_k v'_\epsilon(\lambda;x)\sin \frac{k\pi x}{L}=0$, which is impossible unless $S_k=0$.  Therefore $D_{(\phi, \tau, \psi)}F\vert_{(0,0,\lambda_\epsilon,v_\epsilon)}$ is bounded invertible if and only if the following problem
\begin{equation}\label{442}
\left\{
\begin{array}{ll}
\begin{split}
\epsilon^2\frac{d^2\eta_\epsilon}{dx^2}&+\tilde f_w(\lambda_\epsilon; \tilde w_\epsilon(\lambda,x))\eta_\epsilon+\tilde f_\lambda(\lambda_\epsilon; \tilde w_\epsilon(\lambda, x))\xi_\epsilon=0,\\
\xi_\epsilon\int_0^L \Big(&\frac{a_1+c_1v_\epsilon}{(a_1+b_1\lambda_\epsilon e^{-r v_\epsilon }+c_1v_\epsilon)^2}-1 \Big)e^{-rv_\epsilon} dx\\
&+\int_0^L \Big(\frac{b_1r\lambda_\epsilon e^{-r v_\epsilon }-c_1}{(a_1+b_1\lambda_\epsilon e^{-r v_\epsilon }+c_1v_\epsilon)^2}+r\Big)\lambda_\epsilon e^{-rv_\epsilon}\eta_\epsilon dx=0.
\end{split}
\end{array}
\right.
\end{equation}
has only the trivial solution $\xi_\epsilon= 0$ and $\eta_\epsilon(x)\equiv0$.  We argue by contradiction and without loss of generality, we assume that $\vert\xi_\epsilon\vert+\sup_{[0,L]}\vert\eta_\epsilon(x)\vert=1$. Since $\lambda_\epsilon\rightarrow 0$ as $\epsilon \rightarrow 0$, we infer from the second equation in (\ref{442}) that $\xi_\epsilon\rightarrow0$ as $\epsilon\rightarrow 0$.  On the other hand, since $\tilde f_\lambda(\lambda_\epsilon; \tilde w_\epsilon(\lambda,x))$ is bounded and $\tilde {\mathcal{L_\epsilon}}$  has a bounded inverse uniform in $\epsilon$, we conclude from the first equation that $\sup_{[0,L]} \vert\eta_\epsilon\vert \rightarrow 0$ as $\epsilon \rightarrow0$.  However, this contradicts our assumption.  Therefore $D_{(\phi, \tau, \psi)}F\vert_{(0,0,\lambda_\epsilon,v_\epsilon)}$ is nonsingular at $(0,0,\lambda_\epsilon, v_\epsilon(\lambda;x))$ and Theorem \ref{theorem41} follows from the Implicit Function Theorem.
\end{proof}
It is an interesting and also important mathematical question to study the stability of the single boundary spike solutions to the shadow system and the full system.  To this end, one needs more detailed asymptotic expansions on the perturbed solutions than we obtained and this is postponed to future study.  We refer to \cite{IWW,KW,KMW,KMW2,WW,WW2} and the references therein for stability analysis of spiky solutions of some closely related reaction--diffusion systems and/or the shadows systems.

\section{Conclusions and Discussions}\label{section5}
In this paper, we investigate the interspecific competition through a reaction--advection--diffusion system with Beddington--DeAngelis response functionals.  Species segregation phenomenon is modeled by the global existence of time-dependent solutions and the formation of stable boundary spike/layers over one--dimensional finite domains.
\begin{figure}[h!]
\centering
  \includegraphics[width=\textwidth]{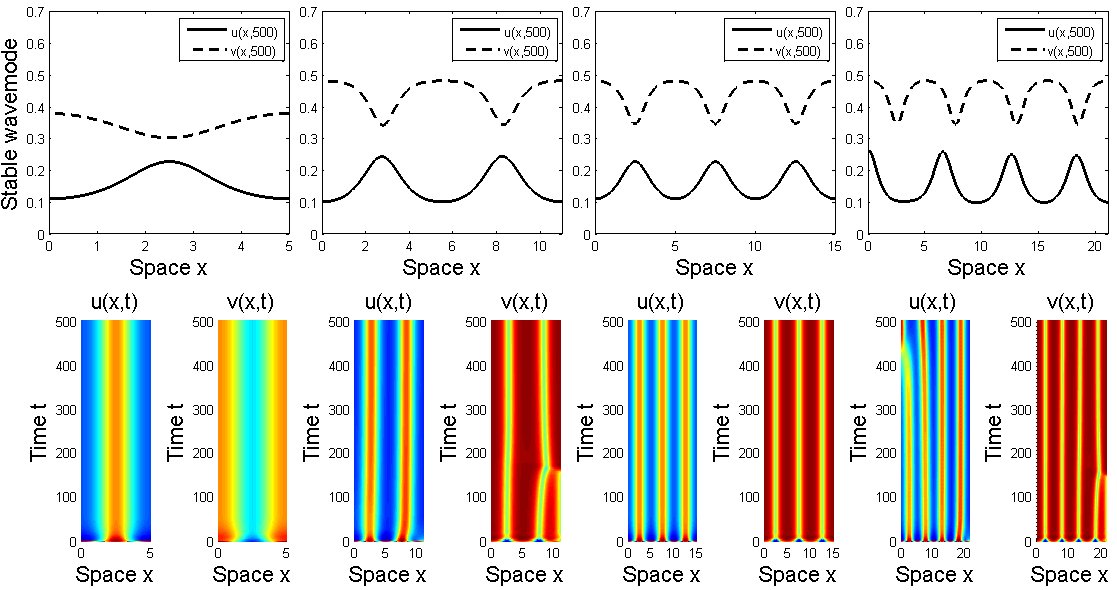}
 \caption{The sensitivity function $\phi(v)\equiv 1$ is selected.   Stable wave mode in the form of $\cos \frac{k_0\pi x}{L}$, where $k_0$ is given in Table \ref{table2}.  $\chi$ is chosen to be slightly larger than $\chi_0$ and the rest system parameters are chosen to be the same as in Table \ref{table2}.  Initial data are small perturbations of $(\bar u,\bar v)$.}\label{figure2}
\end{figure}
There are several findings in our theoretical results.  First of all, we prove that system (\ref{1})--(\ref{2}) admits global--in--time classical positive solutions which are uniformly bounded globally for any space dimensions.  For $\Omega=(0,L)$, we show that large directed dispersal rate $\chi$ of the species destabilizes the homogeneous solution $(\bar u,\bar v)$ and nonconstant positive steady states bifurcate from the homogeneous solution.  Moreover, we study the linearized stability of the bifurcating solutions which shows that the only stable bifurcating solutions must have a wavemode number $k_0$, which is a positive integer that minimizes bifurcating value $\chi_k$ in (\ref{314}).--see Theorem \ref{theorem32} and Theorem \ref{theorem33}.  That being said, the constant solution loses its stability to the wavemode $\cos\frac{k_0 \pi x}{L}$ over $(0,L)$.  Our result indicates that if the domain size $L$ is sufficiently small, then only the first wavemode $\cos\frac{\pi x}{L}$ can be stable, which is spatially monotone; moreover, large domain supports stable patterns with more aggregates than small domains.  We prove that the steady states of (\ref{1})--(\ref{2}) over $(0,L)$ admits boundary spike in $u$ and boundary layer in $v$ if $D_1$ and $\chi$ are comparably large and $D_2$ is small.  Therefore, we can construct multi-spike or multi-layer solutions to the system by reflecting and periodically extending the single spike over $\pm L$, $\pm 2L$,...  These nontrivial structures can be used to the segregation phenomenon through inter--specific competition.  Compared to the transition layer in \cite{WGY}, our results suggests the boundary spike solutions to (\ref{1}) is due to the Beddington--DeAngelis dynamics since a similar approach was adapted in \cite{WGY}.

In Table \ref{table2}, we list the wavemode numbers $k_0$ and the corresponding minimum bifurcation values for different lengths.  Figure \ref{figure2} gives numerical simulations on the evolutions of stable patterns over different intervals.  In Figure \ref{figure3}, we plot the spatial--temporal solutions to illustrate the formation of single boundary spike to $u$ and boundary layer to $v$.  The boundary spike and layer here correspond to those obtained in Theorem \ref{theorem31}.
\begin{table}[h!]
\centering
\begin{tabular}{|c|c|c|c|c|c|}
  \hline
    Domain size $L$& 3 & 5 & 7& 9& 11\\ \hline
    $k_0$&1&2&2&3&3\\ \hline
    $\chi_k$&9.9418&10.392&9.9120&9.9418&9.9647\\ \hline
    Domain size $L$& 13 & 15 & 17 & 19& 21\\ \hline
    $k_0$&4&5&5&6&6\\ \hline
    $\chi_k$&9.8872&9.9418&9.8937&9.8956&9.9120\\
  \hline
\end{tabular}
\caption{Stable wavemode number $k_0$ and the corresponding bifurcation value $\chi_{k_0}$ for different interval length.  The system parameters are $D_1=1, D_2=0.1, a_1=a_2=0.5, b_1=2, b_2=1$ and $c_1=0.5, c_2=1$.  According to Theorem \ref{theorem32}, the stable wavemode is $\cos \frac{k_0\pi x}{L}$.  Therefore, stable and nontrivial patterns must develop in the form of $\cos \frac{k_0\pi x}{L}$ if $\chi$ is chosen to be slightly larger than $\chi_{k_0}$.  We can also see that larger domains support bigger wavemode number.  Figure \ref{figure2} is given to illustrate the wavemode selection mechanism.}
\label{table2}
\end{table}
\begin{figure}[h!]
\centering
  \includegraphics[width=\textwidth]{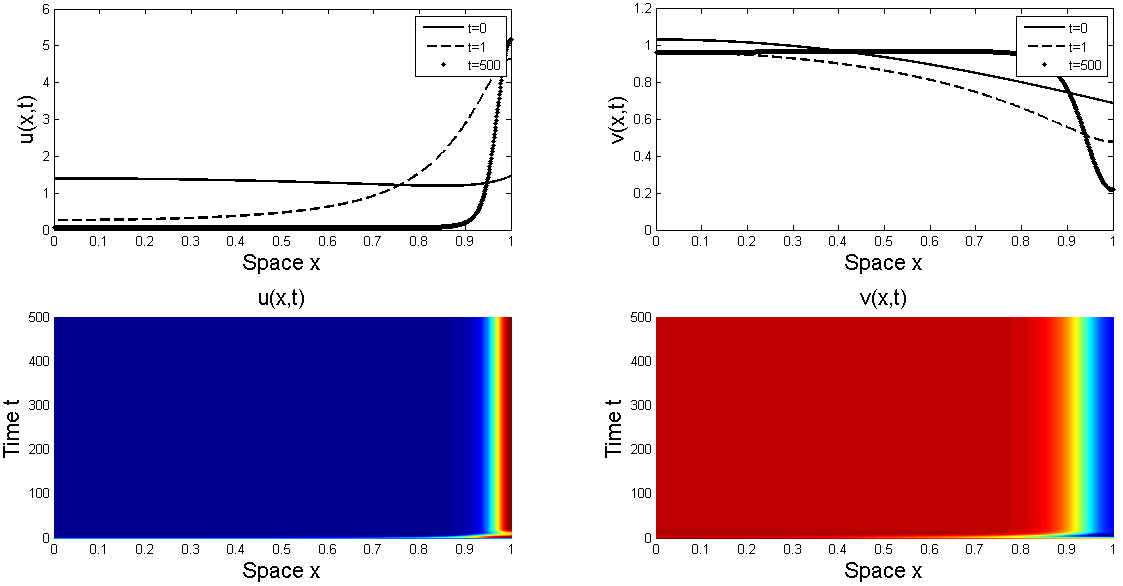}
 \caption{Formation of stable single boundary spike of $u$ and boundary layer of $v$.Diffusion and advection rates are chosen to be $D_1=5$, $\chi=30$, $D_2=5\times 10^{-3}$.  The rest system parameters are $a_1=0.2$, $b_1=0.8$, $c_1=0.1$ and $a_2=0.6$, $b_2=0.2$, $c_2=0.4$.  Initial data are $u_0=\bar u+0.5\cos\frac{2\pi x}{5}$ and $v_0=\bar v+0.5\cos\frac{2\pi x}{5}$, where $(\bar u,\bar v)=(0.933...,0.533...)$.}\label{figure3}
\end{figure}
We propose some problems for future studies.  System (\ref{1}) describes a situation that $u$ directs its disperse strategy over the habitat to deal with the population pressures from $v$, which moves randomly over the domain.  From the viewpoint of mathematical modeling, it is interesting to study the situation when both species takes active dispersals over the habitat.  Then system (\ref{1}) becomes a double--advection system with nonlinear kinetics.  Analysis of the new system becomes much more complicated now since no maximum principle is available for the second equation.  We also want to point out that it is also interesting to study the effect of environment heterogeneity on the spatial--temporal behaviors of the solutions.

Our bifurcation analysis is carried out around bifurcation point $(\bar u,\bar v,\chi_k)$.  It is interesting to investigate the global behavior of the continuum  of $\Gamma_k(s)$, denoted by $\mathcal C$.  According to the global bifurcation theory in \cite{R} and its developed version in \cite{SW}, there are three possible happenings for $\mathcal C$: (i) it is either not compact in $\mathcal X\times\mathcal X\times \mathbb R^+$, or it contains a point $(\bar u,\bar v,\chi_j)$, $j\neq k$.  Actually, by the standard elliptic embeddings, we can show that $\mathcal C$ is bounded in the axis of $\mathcal X\times \mathcal X$ if $\chi$ is finite.  Therefore, it either extends to infinity in the $\chi$-axis or it stops at $(\bar u,\bar v,\chi_j)$.  It is unclear to us which is the case for $\chi_k$.  Considering Keller--Segel chemotaxis models over $(0,L)$ without cellular growth,  some topology arguments are applied in \cite{CKWW,WX} etc. to show that all solutions on the first branch must either be monotone increasing or deceasing.  Therefore $\chi_1$ can not intersect at $(\bar u,\bar v,\chi_j)$ for $j\neq1$ since all the solutions around it must be non-monotone. This shows that $\Gamma_1$ extends to infinity in the $\chi$ direction.  However, the appearance of complex structures of the kinetics in our model inhabits this methodology.

Our numerical simulations suggest that these boundary spikes and layers are globally stable for a wide range of system parameters.  Rigorous stability analysis of these spikes is needed to verify our theoretical results.  It is also interesting to investigate the large--time behavior of (\ref{1})--(\ref{2}), which requires totally different approached from we apply in our paper.

\medskip
\medskip

\end{document}